\numberwithin{equation}{subsection}
\theoremstyle{plain}
\newtheorem{thm}[subsection]{Theorem}
\newtheorem{prop}[subsection]{Proposition}
\newtheorem{lemma}[subsection]{Lemma}
\newtheorem{cor}[subsection]{Corollary}
\theoremstyle{definition}
\newtheorem{defn}[subsection]{Definition}
\newtheorem{notn}[subsection]{Notation}
\newtheorem{cont}[subsection]{Contents}
\newtheorem{ackn}[subsection]{Acknowledgement}
\theoremstyle{remark}
\newtheorem{rem}[subsection]{Remark}
\newtheorem{rems}[subsection]{Remarks}
\begin{document}
\title{Simplicial homotopy theory of algebraic varieties over real closed fields, Part 1}
\author{Ambrus P\'al}
\date{July 4, 2022.}
\address{Department of Mathematics, 180 Queen's Gate, Imperial College, London, SW7 2AZ, United Kingdom}
\email{a.pal@imperial.ac.uk}
\begin{abstract} We study the homotopy type of the simplicial set of continuous semi-algebraic simplexes of an algebraic variety defined over a real closed field, which we will call the real homotopy type. We prove an analogue of the theorem of Artin-Mazur comparing the real homotopy type with the \'etale homotopy type. This paper is part one of a sequence of papers on this topic.
\end{abstract}
\footnotetext[1]{\it 2000 Mathematics Subject Classification. \rm 14F35, 14P10.}
\maketitle
\markboth{Ambrus P\'al}{Simplicial homotopy over real closed fields}

\tableofcontents

\section{Introduction}

Let $K$ be a real closed field and let $C$ denote its algebraic closure. The aim of this paper is to study the {\it real (or definable) homotopy type} of a scheme $X$ over such a $K$. It is defined as follows. Let $\Delta^n_K$ denote the $n$-dimensional simplex over $K$:
$$\Delta^n_K=\{(x_0,x_1,\ldots,x_n)\in K^n\mid \sum_{i=0}^nx_i=1\textrm{ and }0\leq x_i\textrm{ }(\forall i=0,1,\ldots,n)\}$$
where $\leq$ denotes the canonical ordering of $K$. Let $Y\subseteq K^n$ be a semi-algebraic set. By the {\it simplicial set of semi-algebraic singular simplexes} of $Y$ we mean the simplicial set $S_*(Y)$ such that for every $n\in\mathbb N$ the set $S_n(Y)$ consists of all continuous semi-algebraic maps $f:\Delta^n_K\rightarrow Y$ while the face and degeneracy maps of $S_*(Y)$ are defined the same way as the usual ones. Let $Re(Y)$ denote the homotopy type of $S_*(Y)$ in  the homotopy category Ho$(SSets)$ of simplicial sets. 

The key result relating the real homotopy type with the relative etale homotopy type is the analogue of a basic comparison theorem of Artin and Mazur. Let $Sch$ denote the category of locally Noetherian schemes, and let
$$Et(\cdot):Sch\longrightarrow\textrm{Pro-}\textrm{Ho}(Sets)$$
denote the Artin--Mazur \'etale homotopy type functor. For every object $Y$ in  Ho$(SSets)$ let $X^{\wedge}$ denote its profinite completion in 
$\textrm{\rm Pro-}\textrm{\rm Ho}(SSets)$.
\begin{thm}\label{artin-mazur} For every quasi-projective geometrically unibranch variety $X$ over $C$ there is a natural weak equivalence:
$$m_X:Et(X)\longrightarrow Re(X(C))^{\wedge}
$$
in $\textrm{\rm Pro-}\textrm{\rm Ho}(SSets)$.
\end{thm}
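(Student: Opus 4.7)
My plan is to follow the architecture of Artin--Mazur's original comparison theorem, replacing the complex analytic site by the semi-algebraic one. I would first construct the comparison map $m_X$ at the level of hypercovers: given an \'etale hypercover $U_\bullet \to X$, pass to $C$-points and apply the semi-algebraic singular functor degreewise. The resulting augmented bisimplicial set $S_*(U_\bullet(C)) \to S_*(X(C))$ should satisfy a semi-algebraic version of Verdier's hypercovering theorem, so that taking the diagonal gives a weak equivalence onto $S_*(X(C))$. Passing to connected components in the hypercover direction yields a natural map $\pi_0(U_\bullet(C)) \to S_*(X(C))$ up to weak equivalence; assembling these as $U_\bullet$ varies over \'etale hypercovers, and profinitely completing, produces $m_X$.

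To prove that $m_X$ is a weak equivalence, it suffices after profinite completion to compare the two sides on finite \'etale covers and on cohomology with finite coefficients. The first is a semi-algebraic Riemann existence theorem: finite \'etale covers of $X$ correspond, Galois-equivariantly, to finite semi-algebraic covering spaces of $X(C)$. Over $C = K[\sqrt{-1}]$ one approaches this by triangulating $X(C)$ semi-algebraically in the sense of Delfs--Knebusch, observing that \'etale covers trivialize over semi-algebraic cells, and using the geometrically unibranch hypothesis to rule out spurious branches at singular points so that \'etale-local and semi-algebraic-local connectedness agree. The cohomological comparison with finite coefficients then follows via a Postnikov d\'evissage against the Eilenberg--MacLane data, or directly by comparing \v{C}ech complexes for refined semi-algebraic and \'etale hypercovers.

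A more economical endgame is via the Tarski--Seidenberg transfer principle: all the invariants in question (finite \'etale covers, finite semi-algebraic covers, and finite-coefficient cohomology computed from a triangulation) depend only on first-order data preserved under extension of real closed fields. Hence the equivalence for an arbitrary pair $(K, C)$ can be deduced from the classical Artin--Mazur equivalence for $(\mathbb{R}, \mathbb{C})$, provided $m_X$ is defined in a manifestly base-change compatible manner, which the hypercover-theoretic construction above guarantees.

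The principal obstacle I anticipate is the foundational semi-algebraic geometry over a general real closed field: establishing the hypercovering theorem for semi-algebraic singular simplicial sets, and proving semi-algebraic Riemann existence over $K$ rather than over $\mathbb{R}$. Both rest on a robust theory of semi-algebraic triangulations, covering spaces, and monodromy over $K$, much of which is presumably developed elsewhere in this sequence of papers. Once those tools are in place, the remainder is a formal manipulation of pro-objects and profinite completions, with the quasi-projectivity ensuring the hypercovers needed can be chosen within a manageable cofinal system.
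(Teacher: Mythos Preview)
Your proposal is correct in its overall architecture and matches the paper's approach closely: both follow Artin--Mazur's original argument, constructing the comparison map via a bisimplicial diagonal argument on hypercovers, introducing an intermediate site of local isomorphisms between the open-cover site on $X(C)$ and the \'etale site on $X$, and then reducing to the two key inputs---the semi-algebraic Riemann existence theorem and the cohomological comparison with finite coefficients---before invoking the $\natural$-isomorphism criterion and (11.1) of \cite{AM} for the geometrically unibranch case.

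Where you diverge from the paper is in how those two inputs are supplied. The paper does \emph{not} use semi-algebraic triangulations or a direct Tarski--Seidenberg transfer for Riemann existence. Instead, it proves a fibration theorem (Theorem~\ref{fibration-theorem2}): an elementary fibration of smooth $C$-varieties induces a Kan fibration on semi-algebraic singular simplicial sets. From this one gets long exact homotopy sequences, computes $\pi_1$ of curves via moduli (Proposition~\ref{curves_fungroup}), shows affine elementary neighbourhoods are $K(\pi,1)$'s with good $\pi_1$ (Proposition~\ref{elementary1}), and then proves Riemann existence and cohomological comparison by induction on dimension along Artin's tower of elementary fibrations. The transfer principle does appear, but only in the restricted form of Theorem~\ref{basic_comparison2} (invariance of $S_*$ under extension of real closed fields) and Lemma~\ref{classical_riemann} (for varieties defined over $\overline{\mathbb Q}$); it is not used to deduce the general case directly from the classical one. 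Your ``economical endgame'' via transfer would require, for an arbitrary $X/C$, descending to a model over a field that simultaneously embeds compatibly into $K$ and $\mathbb R$---this is exactly where the paper instead invokes moduli spaces and the fibration theorem to move between fibres. Your route is closer in spirit to Huber's GAGA-style methods, which the paper cites but deliberately avoids in favour of the fibration-theoretic argument (at the cost of working only in the smooth case).
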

Just like in the classical case, the proof of the result above relies on a semi-algebraic version of the Riemann existence theorem (see Theorem \ref{riemann_existence}) and a comparison isomorphism between the \'etale and the semi-algebraic cohomology groups of a finite locally constant \'etale sheaf (see Theorem \ref{cohomological}). Both of these result has been published in Roland Huber's doctoral dissertation \cite{{Hu}}. In a true {\it tour de force} Huber develops enough of the theory of the semi-algebraic analogue of complex analytic varieties and GAGA type theorems to prove these results in the large generality we want to use them. Unfortunately this thesis was never fully published; in addition to some announcements of results (see \cite{HK} and \cite{Sch}), only parts appeared in \cite{Hu2}.Therefore in this paper we will apply a somewhat curious approach: instead of trying to cover Huber's work, we will give rather different proofs, but in the smooth case only. This way the sceptical reader could at least accept the validity of our main result in the smooth case.

The proof of the central comparison result (Theorem \ref{artin-mazur}) largely follows the original arguments. As we mentioned above the key ingredients are the analogue of the Riemann existence theorem (Theorem \ref{riemann_existence}), and an SGA-type comparison result for cohomology (Theorem \ref{cohomological}). Our arguments in the special case of smooth varieties mostly differ only in the aspect of avoiding the use of GAGA-type theorems, but instead computing the semi-algebraic fundamental group and cohomology of elementary fibrations. The key tool to do so is Theorem \ref{fibration-theorem2}, whose content is that maps of algebraic varieties over $C$ which are elementary fibrations in the geometric sense are Kan fibrations in real homotopy theory. Once these basic comparison theorems are available the proof of Theorem \ref{artin-mazur} is essentially the same as the original one.

It is worth noting that it is possible to define the topological type and therefore the homotopy type of definable sets in a more general situation, namely for $o$-minimal theories. The cohomology theory of definable sets for $o$-minimal theories is a highly developed subject, most notably in the work of Edmondo, Woerheide and their collaborators (see for example \cite{Wo}, where the simplicial set of semi-algebraic singular simplexes the appears for the first time, but see also \cite{EW} and \cite{EP} where the Grothendieck's formalism of $6$ operations is developed). However we are mostly interested in comparison results with \'etale homotopy, so for our purposes it is natural to restrict to the theory of real closed fields. In this case we could rely on the rather extensive results of Delfs and Knebusch (see \cite{DeKn1} and \cite{DeKn2}).
\begin{cont} In the next section we introduce the topological type and the definable homotopy type of locally semi-algebraic spaces, and prove some basic comparison theorems: we show the invariance of the definable homotopy type under extension of the base field, and we prove that the definable homotopy type is isomorphic to the usual one when $K$ is the real number field.

In sections three to six we prove, modulo some technical results to be published in the second part, our main technical result, the fibration theorem: it claims that a geometric fibration of smooth varieties over $C$, i.e.~a smooth map with good compactification, induces a Kan fibration of the simplicial sets of semi-algebraic singular simplexes. As an application we prove that the definable homotopy type of smooth curves and abelian varieties are Eilenberg--MacLane spaces in the seventh section. In the eighth section we prove the analogue of the Riemann existence theorem, first for curves, then for elementary fibrations with the help of the fibration theorem, and finally in all cases using the standard arguments in SGA.

In the ninth section we prove the cohomological comparison theorem; again the essential case is elementary fibrations, when the result follows from the fibration theorem, while the rest of the argument is the same as in the classical result. We show our generalisation of the Artin--Mazur comparison theorem in the tenth version, basically along the lines of the original argument. \end{cont}
\begin{ackn} The author was partially supported by the EPSRC grant P36794. I also wish to thank Endre Szab\'o for useful discussions concerning the contents of this paper.
\end{ackn}

\section{The topological type of locally semi-algebraic spaces}

We start this section by recalling some basic definitions of \cite{DeKn1} and \cite{DeKn2}. For the basic notions of simplicial homotopy theory used here see \cite{GJ}. 
\begin{defn}\label{2.1} By a semi-algebraic set (over $K$) we mean a subset $V\subset K^m$ for some $m\in\mathbb N$ which is definable. The ordering of $K$ induces a topology on the set $K^m$, and hence on every semi-algebraic subset $V$ of $K^m$. We call this topology the strong topology. A map $f:V\to W$ between two semi-algebraic sets $V\subset K^m,W\subset K^n$ is called semi-algebraic, if it is continuous with respect to the strong topology and if the graph $\Gamma_f\subset K^n\times K^m=K^{n+m}$ of $f$ is a semi-algebraic subset. Since composition of semi-algebraic maps is semi-algebraic we get that the class of
semi-algebraic sets form a category $\mathcal{SA}_K$ where morphisms are semi-algebraic maps.
\end{defn}
\begin{defn}\label{2.2} For every semi-algebraic set $V$ we define a Grothendieck topology $\mathfrak{S}(V)$ consisting of inclusion maps of subsets of $V$ as follows. Open subsets of $\mathfrak{S}(V)$ are semi-algebraic subsets of $V$ which are open with respect to the strong topology, and coverings are systems which could be refined to finite systems of inclusions $\{V_i\to V\}_{i\in I}$ such that $\bigcup_{i\in I}V_i=V$. Let $\mathcal O_V$ denote the sheaf of $K$-valued functions on $\mathfrak{S}(V)$ which assigns to every open $U\subset V$ the ring of semi-algebraic functions $f:U\to K$. Equipped with this extra structure the triple $(V,\mathfrak{S}(V),\mathcal O_V)$ is a (locally) ringed space which we will call the semi-algebraic space attached to $V$. We may identify $\mathcal{SA}_K$ with a full subcategory of ringed spaces via the functor $V\mapsto (V,\mathfrak{S}(V),\mathcal O_V)$. 
\end{defn}
\begin{defn}\label{2.3} We say that a ringed space $(M,\mathfrak{S}(M),\mathcal O_M)$ is a locally semi-algebraic space (over $K$), or more concisely an $lsa$-space, if $\mathfrak{S}(M)$ consists of inclusion maps of subsets of $M$, and $M$ has an admissible covering $\{M_{\alpha}\}_{\alpha\in I}$ such that for every $\alpha\in I$ the ringed space
$(M_{\alpha},\mathcal O_M|_{M_{\alpha}})$ is a semi-algebraic space (attached to a semi-algebraic set). Let $\mathcal{LSA}_K$ denote the full subcategory of ringed spaces whose objects are locally semi-algebraic spaces. Every morphism $(M,\mathfrak{S}(M),\mathcal O_M)\to
(N,\mathfrak{S}(N),\mathcal O_N)$ is uniquely determined by the map $M\to N$ on the underlying sets by Theorem 1.2 of \cite{DeKn2} on page 7, it will not cause much confusion when we will usually let $M$ denote the tripe $(M,\mathfrak{S}(M),\mathcal O_M)$ of the type above. For simplicity we will call morphisms in $\mathcal{LSA}_K$ semi-algebraic maps.
\end{defn}
Now we are ready to introduce the basic construction of this paper. It is completely analogous to the classical construction (see Example 1.1 of \cite{GJ} on page 3).
\begin{defn}\label{2.4} As usual let $\mathbf{\Delta}$ denote the category whose objects are finite ordinal numbers $\{\underline n\in\omega\}$, and morphism are order-preserving maps. Then there is a functor
$$\mathbf{\Delta}\longrightarrow\mathcal{LSA}_K,
\quad\underline n\mapsto\Delta^n_K,$$
where $\Delta^n_K$ is the semi-algebraic set mentioned already in the introduction:
$$\Delta^n_K=\{(x_0,x_1,\ldots,x_n)\in K^n\mid \sum_{i=0}^nx_i=1\textrm{ and }0\leq x_i\textrm{ }(\forall i=0,1,\ldots,n)\},$$
where $\leq$ denotes the canonical ordering of $K$, and for every morphism $\phi:\underline n\to\underline m$ of $\mathbf{\Delta}$ the functor assigns the semi-algebraic map $|\phi|:\Delta^n_K\to\Delta^m_K$ given by the rule:
$$|\phi|(t_0,\ldots,t_n)=(s_0,\ldots,s_n),$$
where
$$s_i=\begin{cases}
    0 & \text{, if } \phi^{-1}(i)=\emptyset,\\
    \sum_{l\in\phi^{-1}(i)}t_j & \text{, otherwise. } \end{cases}$$
One needs to repeat a standard argument  to verify that this construction is indeed a functor.
\end{defn}
\begin{defn}\label{2.5} Let $SSets$ denote the category of simplicial sets, and let Ho$(SSets)$ denote the usual homotopy category of $SSets$. The singular set $S_*(M)$ of a locally semi-algebraic space $M$ is the simplicial set given by the correspondence:
$$\underline n\mapsto Hom_{\mathcal{LSA}}(\Delta_K^n,M),
$$
where for every morphism $\phi:\underline n\to\underline m$ of $\mathbf{\Delta}$ the corresponding map
$$Hom_{\mathcal{LSA}}(\Delta_K^m,M)\longrightarrow
Hom_{\mathcal{LSA}}(\Delta_K^n,M)$$
is given by the rule $f\mapsto f\circ|\phi|$. We will call $S_*(M)$ the simplicial set of semi-algebraic singular simplexes of $M$, and also the topological type of $M$. Clearly the correspondence $M\mapsto S_*(M)$ is a functor $\mathcal{LSA}\to SSets$. We define the homotopy type of $M$ as the weak equivalence class of $S_*(M)$ in Ho$(SSets)$.
\end{defn}
\begin{lemma}\label{2.6} The topological type of every object of $\mathcal{LSA}$ is fibrant. 
\end{lemma}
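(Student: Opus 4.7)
The plan is to mimic the classical proof that the singular complex of a topological space is a Kan complex, by exhibiting explicit semi-algebraic retractions of $\Delta^n_K$ onto its geometric horns. Recall that $S_*(M)$ is fibrant precisely when every simplicial horn $\Lambda^n_k \to S_*(M)$ (for $n \geq 1$ and $0 \leq k \leq n$) admits a filler $\Delta^n \to S_*(M)$.

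First I would reinterpret the Kan condition geometrically. Let $|\Lambda^n_k|_K \subset \Delta^n_K$ denote the closed semi-algebraic subset $\bigcup_{i \neq k} \{x_i = 0\}$, i.e., the union of all codimension-one faces except the $k$-th, viewed as an $lsa$-subspace of $\Delta^n_K$. A horn in $S_*(M)$ is exactly a family of semi-algebraic maps $f_i : \Delta^{n-1}_K \to M$ (for $i \neq k$) agreeing on overlaps; these glue to a single continuous semi-algebraic map $f : |\Lambda^n_k|_K \to M$, the glued map being semi-algebraic because $|\Lambda^n_k|_K$ carries the obvious finite admissible covering by the images of the faces (see Definition~\ref{2.3}). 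Filling the horn is thus equivalent to producing a semi-algebraic extension $\widetilde f : \Delta^n_K \to M$ of $f$.

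Next I would produce a semi-algebraic retraction $r = r_{n,k} : \Delta^n_K \to |\Lambda^n_k|_K$. Over $\mathbb R$ there is the classical construction: choose a point $p_k$ on the ray from the vertex $e_k$ through the barycenter of the missing face $\{x_k = 0\}$, lying slightly beyond that face, and define $r(x)$ as the unique intersection of the segment from $p_k$ through $x$ with $|\Lambda^n_k|$. The coordinates of $r(x)$ are rational functions of $x$ whose denominators do not vanish on $\Delta^n$, so the same formulas define $r$ over an arbitrary real closed field $K$; alternatively one may invoke Tarski--Seidenberg, since the existence of a continuous semi-algebraic retraction onto $|\Lambda^n_k|$ with defining polynomials of prescribed degrees is a first-order statement in the theory of real closed fields. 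The composite $\widetilde f := f \circ r$ is then a continuous semi-algebraic map $\Delta^n_K \to M$ restricting to the prescribed $f_i$ on each of the relevant faces, which supplies the required filler and shows $S_*(M)$ is fibrant.

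The main technical obstacle is really only the careful construction of $r_{n,k}$: one has to verify that the candidate rational formulas land inside $\Delta^n_K$ and are continuous across the seams of $|\Lambda^n_k|_K$. The gluing step for maps out of a closed admissible cover is then routine in the Delfs--Knebusch framework recalled in Definition~\ref{2.3}, and the independence from the base field follows either from the explicit formulas or from transfer, so no genuinely new issue arises over $K$ as opposed to $\mathbb R$.
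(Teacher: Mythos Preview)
Your proposal is correct and follows essentially the same approach as the paper: both reduce the Kan extension condition to the existence of a semi-algebraic retraction $\Delta^n_K\to\Lambda^n_{k,K}$ obtained by linear projection from an external point on the affine hyperplane $\sum x_i=1$, and then set $\widetilde f=f\circ r$. The only cosmetic differences are that the paper projects from the point with $c_k=2$ and $c_i=-1/n$ for $i\neq k$ (i.e.\ beyond the vertex $e_k$) and writes out the resulting piecewise-rational formulas explicitly on the regions $\Delta^n_K[i,k]$, whereas you project from a point just beyond the missing face and appeal to transfer; both choices yield a valid semi-algebraic retraction.
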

\begin{proof} We define the $k$-th horn $\Lambda^n_{k,K}\subset\Delta^n_K$ as the following semi-algebraic set:
$$\Lambda^n_{k,K}=\{(x_0,x_1,\ldots,x_n)\in\Delta^n_K \mid x_i=0\textrm{ for some $i\neq k$ and }i\in\{0,1,\ldots,n\}\}.$$
By definition we have to check that for every morphism $f:\Lambda^n_{k,K}\to M$ of locally semi-algebraic spaces there is a morphism $g:\Delta^n_K\to M$ in $\mathcal{LSA}$ such that $f=g\circ i^n_k$, where $i^n_k$ is the inclusion map:
$$\xymatrix{ \Lambda^n_{k,K}\ar[r]^{f} \ar@{^{(}->}[d]_{i^n_k}& M \\
 \Delta^n_K\ar@{.>}[ru]_{g}. & }$$
This claim follows from the fact that there is a semi-algebraic map $\pi^n_k:\Delta^n_K
\to \Lambda^n_{k,K}$ such that $\pi^n_k\circ i^n_k=\textrm{id}_{\Lambda^n_{k,K}}$, for example the projection from the point $\mathbf c=(c_0,\ldots,c_n)$, where
$$c_i=\begin{cases}
2 & \text{, if } i=k,\\
\frac{-1}{n} & \text{, otherwise. } \end{cases}$$
In fact here is an explicit formula: for every $i\neq k, i\in\{0,1,\ldots,n\}$ let
$\Delta^n_K[i,k]\subset\Delta^n_K$ be the following semi-algebraic set:
$$\Delta^n_K[i,k]=\{(x_0,x_1,\ldots,x_n)\in\Delta^n_K \mid
x_i\leq x_j\textrm{ for every $j\neq k$ and }j\in\{0,1,\ldots,n\}\}.$$
These cover $\Delta^n_K$. Then the $j$-th coordinate $y_j$ of
$\pi^n_k(x_0,x_1,\ldots,x_n)$ for an element $(x_0,x_1,\ldots,x_n)\in\Delta^n_K[i,k]$ is given by:
$$y_j=\begin{cases}
\frac{x_j-x_i}{nx_i+1} & \text{, if } j\neq k,\\
\frac{x_k+2nx_i}{nx_i+1} & \text{, otherwise. } \end{cases}$$
\end{proof}
\begin{defn} Let $SSets^0$ denote the category of pointed simplicial sets, and let Ho$(SSets^0)$ denote the usual pointed homotopy category of $SSets^0$. A pointed locally semi-algebraic space is a locally semi-algebraic space $M$ and a point $m\in M$. These form a category $\mathcal{LSA}^0_K$ in an obvious way: a morphism $(M,m)\to(N,n)$ is a morphism $\phi:M\to N$ of locally semi-algebraic spaces such that the map on underlying topological spaces takes $m$ to $n$. For every locally semi-algebraic space $M$ and $m\in M$ let $s_m$ denote the unique morphism $\Delta^0_K\to M$ whose image is $m$. Then there is a unique functor $\mathcal{LSA}^0_K\to SSets^0$ which assigns $(S_*(M),s_m)$ to $(M,m)$ for every object $(M,m)$ of $\mathcal{LSA}^0_K$. We will call the latter the pointed topological type of pointed locally semi-algebraic spaces.
\end{defn}
\begin{defn} Let $V\subset K^m,W\subset K^n$ be two semi-algebraic sets. Then the semi-algebraic set $V\times W\subset K^m\times K^n=K^{m+n}$ with the projection maps
$\pi_1:V\times W\to V,\pi_2:V\times W\to W$ is the product of $V$ and $W$ in
$\mathcal{SA}_K$ in the categorical sense. Using this fact in Example 2.5 of \cite{DeKn2} on pages 14--15 Delf and Knebusch constructs for every pair of locally semi-algebraic spaces $M,N$ a product $M\times N$ in the category $\mathcal{LSA}_K$. Let
$I_K=[0,1]=\{a\in K\mid 0\leq a\leq1\}$ be the analogue of the unit interval. A homotopy $H$ between two maps $f:M\to N,g:M\to N$ of locally semi-algebraic spaces is a map $H:M\times I_K\to N$ in $\mathcal{LSA}_K$ such that $H|_{M\times\{0\}}=f$ and $H|_{M\times\{1\}}=g$. We say that $f$ and $g$ are homotopy-equivalent if there is a homotopy between them. 
\end{defn}
\begin{defn} A locally semi-algebraic pair is a locally semi-algebraic space $M$ and a subset $M_0\subseteq M$. These form a category $\mathcal{LSA}^{p}_K$ in an obvious way: a morphism $(M,M_0)\to(N,N_0)$ is a morphism $\phi:M\to N$ of locally semi-algebraic spaces such that the map on underlying topological spaces takes $M_0$ to $N_0$. Clearly
$\mathcal{LSA}^0_K$ is a full subcategory, and there is also a natural functor $\mathcal{LSA}_K\to \mathcal{LSA}^p_K$ mapping every $lsa$-space $M$ to the pair $(M,\emptyset)$. A homotopy $H$ of two morphisms $f:(M,M_0)\to(N,N_0)$ and  $g:(M,M_0)\to(N,N_0)$ in $\mathcal{LSA}_K^p$ is a map $H:M\times I_K\to N$ in $\mathcal{LSA}_K$ such that $H|_{M\times\{0\}}=f,H|_{M\times\{1\}}=g$ and $H$ maps $M_0\times I_K$ to $N_0$, and we say that $f$ and $g$ are homotopy-equivalent if there is a homotopy between them. Homotopy equivalence is an equivalence relation; for every morphism $f$ in $\mathcal{LSA}_K^p$ let $[f]$ denote its homotopy equivalence class. We will call such a homotopy between two morphisms in $\mathcal{LSA}_K^0$ a pointed homotopy, and two morphisms in
$\mathcal{LSA}_K^0$ are pointed homotopy-equivalent if there is a pointed homotopy between them. 
\end{defn}
\begin{lemma}\label{2.10} $(i)$ Let $f:M\to N,g:M\to N$ be two homotopy-equivalent maps of locally semi-algebraic spaces. Then the induced maps $S_*(f):S_*(M)\to S_*(N)$ and $S_*(g):S_*(M)\to S_*(N)$ are homotopy-equivalent in $SSets$. 

$(ii)$ For every pair $f:(M,m)\to(N,n)$ and $g:(M,m)\to(N,n)$ of two homotopy-equivalent maps of pointed locally semi-algebraic spaces the induced maps $S_*(f):S_*(M,s_m)\to S_*(N,s_n)$ and $S_*(g):S_*(M,s_m)\to S_*(N,s_n)$ are homotopy-equivalent in $SSets^0$.
\end{lemma}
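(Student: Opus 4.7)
The plan is to lift the given semi-algebraic homotopy $H\colon M\times I_K\to N$ to an explicit simplicial homotopy between $S_*(f)$ and $S_*(g)$. First I identify $I_K$ with $\Delta^1_K$ in $\mathcal{LSA}_K$ via the map $(x_0,x_1)\mapsto x_1$, under which the endpoints $0,1\in I_K$ correspond to the vertices $e_0=(1,0)$ and $e_1=(0,1)$; this lets me regard $H$ as a morphism $\overline H\colon M\times\Delta^1_K\to N$ that restricts to $f$ on $M\times\{e_0\}$ and to $g$ on $M\times\{e_1\}$.

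For $(i)$ I then define
$$\widetilde H\colon S_*(M)\times\Delta^1\longrightarrow S_*(N),\qquad \widetilde H_n(\sigma,\tau)=\overline H\circ(\sigma,|\tau|),$$
where $(\sigma,|\tau|)\colon\Delta^n_K\to M\times\Delta^1_K$ denotes the pair morphism supplied by the universal property of the Delfs--Knebusch product in $\mathcal{LSA}_K$, and $|\tau|$ is the semi-algebraic realisation from Definition \ref{2.4} of $\tau\in(\Delta^1)_n=\mathrm{Hom}_{\mathbf\Delta}(\underline n,\underline 1)$. That $\widetilde H$ is a simplicial map is a short diagram chase using the functoriality of $\phi\mapsto|\phi|$ established in Definition \ref{2.4}. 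The explicit formula there yields $|\tau|\equiv e_0$ (resp.\ $|\tau|\equiv e_1$) when $\tau$ is the constant map at $0$ (resp.\ $1$), so evaluating $\widetilde H$ on the two vertex sections of $\Delta^1$ recovers $S_*(f)$ and $S_*(g)$; this exhibits $\widetilde H$ as the required simplicial homotopy. Fibrancy of $S_*(N)$ from Lemma \ref{2.6} ensures that the resulting relation is an honest homotopy in $\mathrm{Ho}(SSets)$.

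For $(ii)$ the same $\widetilde H$ works verbatim. The extra observation needed is that, by the convention of the definition immediately preceding the lemma, a pointed homotopy further satisfies $H(m,t)=n$ for every $t\in I_K$; hence
$$\widetilde H_n(s_m,\tau)(x)=\overline H(m,|\tau|(x))=n=s_n(x)$$
for every $\tau\in(\Delta^1)_n$ and every $x\in\Delta^n_K$. Thus $\widetilde H$ is constant at $s_n$ on $\{s_m\}\times\Delta^1$ and therefore descends to a pointed simplicial homotopy between $S_*(f)$ and $S_*(g)$ in $SSets^0$.

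I do not anticipate any genuine obstacle: the argument is a mechanical transcription of the classical topological analogue, and the only ingredient of substance is the appeal to the Delfs--Knebusch product, which guarantees that the pair morphism $(\sigma,|\tau|)$ genuinely lives in $\mathcal{LSA}_K$ so that $\widetilde H_n(\sigma,\tau)$ is an honest element of $S_n(N)$.
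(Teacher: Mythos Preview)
Your argument is correct and essentially the same as the paper's: both construct an explicit simplicial homotopy $S_*(M)\times\Delta^1\to S_*(N)$ from the given $H$ by realising simplices of $\Delta^1$ as semi-algebraic maps into $\Delta^1_K\cong I_K$. The paper phrases this as the composite $S_*(M)\times\Delta^1\xrightarrow{\mathrm{id}\times\iota_1}S_*(M)\times S_*(I_K)\cong S_*(M\times I_K)\xrightarrow{S_*(H)}S_*(N)$, invoking that $S_*$ commutes with products, whereas you write down the level-$n$ formula $(\sigma,\tau)\mapsto\overline H\circ(\sigma,|\tau|)$ directly via the universal property of the product; unwinding the paper's composite gives exactly your formula, so the two proofs coincide (your appeal to fibrancy of $S_*(N)$ is harmless but unnecessary, since an explicit cylinder homotopy already suffices).
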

\begin{proof} Recall that for every $n\in\mathbb N$ the standard $n$-simplex $\Delta ^n$ is the simplicial set which as a functor $\mathbf{\Delta}^{op}\to Sets$  is $\textrm{Hom}_{\mathbf{\Delta}}(\cdot,\underline n)$. Note that every map $f:\Delta^n_K\to P$ of $lsa$-spaces furnishes a map $\Delta ^n\to S_*(P)$ of simplicial sets as follows: to every $\phi\in\textrm{Hom}_{\mathbf{\Delta}}(\underline m,\underline n)$ we assign $f\circ|\phi|\in S_m(P)$. In particular the semi-algebraic map:
$$\Delta^1_K\to I_K,\quad (x_0,x_1)\mapsto x_0$$
furnishes a map $\iota_1:\Delta^1\to S_*(I_K)$ in $SSets$.

Now let $f:M\to N$ and $g:M\to N$ be two maps of locally semi-algebraic spaces and let $H:M\times I_K\to N$ be a homotopy between them. Since the functor of semi-algebraic singular simplexes commutes with products we get that $S_*(M\times I_K)=S_*(M)\times S_*(I_K)$. Then the composition of
$$\textrm{id}_{S_*(M)}\times\iota_1:S_*(M)\times
\Delta^1\longrightarrow S_*(M)\times S_*(I_K)$$
and
$$S_*(H):S_*(M)\times S_*(I_K)=S_*(M\times I_K)\longrightarrow S_*(N)$$
and is a homotopy:
$$S_*(H)\circ(\textrm{id}_{S_*(M)}\times\iota_1):S_*(M)\times\Delta^1\longrightarrow S_*(N)$$
between $S_*(f)$ and $S_*(g)$ in $SSets$. If $f$ and $g$ are pointed maps
$(M,m)\to(N,n)$ and $H$ is a pointed homotopy between $S_*(f)$ and $S_*(g)$ in $SSets^0$.
\end{proof}
\begin{rem}\label{2.13a} Note that both $X=\Delta^n_K$ and
$\Lambda^n_{k,K}$ are contractible, that is, there is a homotopy
$$H:X\times I\longrightarrow X$$
such that $H|_{X\times\{0\}}=\textrm{id}_X$ and $H|_{X\times\{1\}}$ is constant. For example for every $k\in\{0,1,\ldots,n\}$ the map
$H:\Delta^n_K\times I\to\Delta^n_K$ given by
$$((x_0,x_1,\ldots,x_n),\lambda)\mapsto
((1-\lambda)x_0,(1-\lambda)x_1,\ldots,
1-(1-\lambda)(1-x_k),\ldots,
(1-\lambda)x_n)$$
and its restriction onto $\Lambda^n_{k,K}$ will do. Therefore by Lemma \ref{2.10} above the simplicial sets $S_*(\Delta^n_K)$ and
$S_*(\Lambda^n_{k,K})$ are contractible, too.
\end{rem}
\begin{defn} We define the boundary $\partial\Delta^n_K\subset\Delta^n_K$ of the $n$-dimensional simplex over $K$ as the following semi-algebraic set:
$$\partial\Delta^n_K=\{(x_0,x_1,\ldots,x_n)\in\Delta^n_K \mid \textrm{$x_i=0$ for some }i\in\{0,1,\ldots,n\}\}.$$
Moreover we define the $n$-dimensional cube $I_K^n$ and its boundary $\partial I_K^n$ (over $K$) as the following semi-algebraic sets:
$$I^n_K=\underbrace{I_K\times I_K\times\cdots\times I_K}_n=
\{(x_1,\ldots,x_n)\in\Delta^n_K \mid 0\leq x_i\leq1\ (\forall i\in\{1,\ldots,n\})\},$$
and
$$\partial I^n_K=\{(x_1,\ldots,x_n)\in I^n_K \mid \textrm{$x_i=0$ or $x_i=1$ for some }i\in\{1,\ldots,n\}\},$$
respectively. 
\end{defn}
\begin{lemma}\label{2.12} There is an isomorphism
$$\lambda_n:(\Delta^n_K,\partial\Delta^n_K)\to(I^n_K,\partial I^n_K)$$
in $\mathcal{LSA}^p_K$. 
\end{lemma}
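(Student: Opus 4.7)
The map $\lambda_n$ will be constructed as a radial rescaling from well-chosen interior basepoints, so that rays through the basepoints parameterize both polytopes in the same way. After embedding $\Delta^n_K \subset K^{n+1}$ into $K^n$ via the projection onto the last $n$ coordinates, pick the barycenter $p_\Delta = (\tfrac{1}{n+1}, \ldots, \tfrac{1}{n+1}) \in \Delta^n_K$ and the center $p_I = (\tfrac{1}{2}, \ldots, \tfrac{1}{2}) \in I^n_K$. For every nonzero $v \in K^n$ let $\rho_\Delta(v), \rho_I(v) > 0$ be the unique positive scalars with $p_\Delta + \rho_\Delta(v) v \in \partial \Delta^n_K$ and $p_I + \rho_I(v) v \in \partial I^n_K$; these exist by convexity and are given explicitly as the minimum of finitely many linear-fractional expressions (one per facet), so they are continuous, semi-algebraic, positive, and homogeneous of degree $-1$ in $v$.

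I would then define
\[ \lambda_n(x) = p_I + \frac{\rho_I(x - p_\Delta)}{\rho_\Delta(x - p_\Delta)}\,(x - p_\Delta) \text{ for } x \neq p_\Delta, \qquad \lambda_n(p_\Delta) = p_I. \]
By construction $\lambda_n$ sends the ray segment from $p_\Delta$ to the boundary point $p_\Delta + \rho_\Delta(v) v$ linearly onto the ray segment from $p_I$ to $p_I + \rho_I(v) v$. Consequently $\lambda_n$ restricts to a bijection $\partial \Delta^n_K \to \partial I^n_K$, and a two-sided inverse is obtained by swapping the roles of the simplex and the cube in the same formula. Both $\lambda_n$ and its inverse are manifestly semi-algebraic, so once continuity is established they will be morphisms in $\mathcal{LSA}^p_K$ inverse to each other.

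The main obstacle is continuity of $\lambda_n$ at the basepoint $p_\Delta$, where the formula is only defined by the special clause $\lambda_n(p_\Delta) = p_I$. Since $\rho_I/\rho_\Delta$ is homogeneous of degree zero, its value on $v$ depends only on the direction of $v$, and boundedness of $\lambda_n$ near $p_\Delta$ reduces to boundedness of the continuous semi-algebraic function $v \mapsto \rho_I(v)/\rho_\Delta(v)$ on the unit sphere $S^{n-1}_K \subset K^n$. This follows from the semi-algebraic extreme value theorem of Delfs and Knebusch (see \cite{DeKn1}): a continuous semi-algebraic function on a closed and bounded semi-algebraic subset of $K^n$ attains its supremum over the real closed field $K$. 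The resulting estimate $|\lambda_n(x) - p_I| \leq C\,|x - p_\Delta|$ for a constant $C \in K$ (applied coordinatewise) yields the required continuity at $p_\Delta$, and the identical argument for $\lambda_n^{-1}$ completes the construction of the isomorphism in $\mathcal{LSA}^p_K$.
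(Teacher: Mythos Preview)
Your argument is correct and yields the desired isomorphism of pairs. The radial--rescaling map between two bounded convex bodies with chosen interior points is the standard device here, and you have carefully checked the two points that matter in the semi-algebraic setting: that the gauge functions $\rho_\Delta,\rho_I$ are continuous semi-algebraic (being minima of finitely many linear-fractional expressions, one per facet), and that the ratio $\rho_I/\rho_\Delta$, being homogeneous of degree $0$, is bounded on the closed bounded set $S^{n-1}_K$ by the semi-algebraic extreme value theorem, which handles continuity at the basepoint.

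Your route differs from the paper's. The paper proceeds by induction on $n$: it introduces an intermediate ``truncated cube'' $T^n_K$ and factors $\lambda_n$ as $\beta_n\circ\alpha_n$, where $\alpha_n:(\Delta^n_K,\partial\Delta^n_K)\to(T^n_K,\partial T^n_K)$ is the cone (over a vertex) of the inductively given $\lambda_{n-1}$ on the opposite face, and $\beta_n:(T^n_K,\partial T^n_K)\to(I^n_K,\partial I^n_K)$ is a radial blow-up from a boundary point of $T^n_K$ --- essentially your construction applied only to the second step. For the verification that these explicit formulas are continuous bijections the paper does not argue directly over $K$; it writes down first-order statements and invokes completeness of the theory of real closed fields to reduce to $K=\mathbb R$, where the claim is declared well known. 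Your approach is more economical: no induction, no auxiliary polytope, and no appeal to transfer; the price is that you must invoke the semi-algebraic extreme value theorem to control behaviour at the centre, whereas the paper's coning avoids any singular point in the interior by pushing the apex to a vertex. Either way one gets an explicit semi-algebraic isomorphism of pairs.
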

\begin{proof} We are going to prove the claim by induction on $n$. For $n=1$ the map:
$$\lambda_1:\Delta^1_K\to I^1_K,\quad\lambda(x_0,x_1)=
x_0$$
will do. Now assume that $n\geq2$ and $\lambda_{n-1}$ has already been constructed. Let $T^n_K$ denote the semi-algebraic set:
$$T^n_K=\{(x_1,\ldots,x_n)\in K^n \mid 0\leq x_1\leq1,\ 
\frac{x_1}{2}\leq x_i\leq1-\frac{x_1}{2},\ \forall i\in\{2,\ldots,n\}\}$$
over $K$, and let $\partial T^n_K$ be its boundary:
$$\partial T^n_K=\{(x_1,\ldots,x_n)\in T^n_K \mid
x_1(1-x_1)\cdot\prod_{i\in\{2,\ldots,n\}}
(x_i-\frac{x_1}{2})(x_i-1+\frac{x_1}{2})=0\},$$
and let $T^n_{0,K}$ be its base:
$$T^n_{0,K}=\{(x_1,\ldots,x_n)\in T^n_K \mid 0=x_1\}.$$
Also set:
$$\Delta^n_{0,K}=\{(x_1,\ldots,x_n)\in \Delta^n_{0,K}
\mid 0=x_1\}.$$
It will be sufficient to show that there are isomorphisms
$$\alpha_n\!:\!(\Delta^n_K,\partial\Delta^n_K)
\to (T^n_K,\partial T^n_K),
\ \beta_n\!:\!(T^n_K,\partial T^n_K)\to(I^n_K,\partial I^n_K)$$
in $\mathcal{LSA}^p_K$. We will first construct $\alpha_n$. Since
$$T^n_{0,K}=\{(x_1,\ldots,x_n)\in K^n \mid
\textrm{$0=x_1$ and $(x_2,\ldots,x_n)\in I^{n-1}_K$}\},$$
$$\Delta^n_{0,K}=\{(x_1,\ldots,x_n)\in K^n \mid
\textrm{$0=x_1$ and $(x_2,\ldots,x_n)\in \Delta^{n-1}_{K}$}\},$$
there is an isomorphism $\lambda_{n-1}:\Delta^n_{0,K}\to
T^n_{0,K}$ by the induction hypothesis. We take $\alpha_n$ to be the cone of $\lambda_{n-1}$ over the bases $\Delta^n_{0,K}$ and $T^n_{0,K}$. More explicitly set:
$$\mathbf v=(1,0,\ldots,0)\in\Delta^n_K,\quad
\mathbf w=(1,\frac{1}{2},\ldots,\frac{1}{2})\in T^n_K.$$
Then we set $\alpha_n(\mathbf v)=\mathbf w$, while for every $\mathbf v\neq\underline x=(x_1,\ldots,x_n)\in C^n_K$ we have:
$$\alpha_n(\underline x)=\mathbf w+s(\underline x)^{-1}\cdot\left(
\lambda_{n-1}(\mathbf v+s(\underline x)\cdot(\underline x-\mathbf v))
-\mathbf w\right),$$
where $s(\underline x)$ is the unique element of $K$ such that
$s(\underline x)\geq1$ and $\mathbf v+s(\underline x)\cdot(\underline x-\mathbf v)\in\Delta^n_{0,K}$. Clearly $\alpha_n$ is a definable map. Since we can write down first order formulas in the language of ordered fields stating that $\alpha_n$ is continuous and a bijection, by the completeness of the theory of real closed fields we get that we only need to verify that $\alpha_n$ is continuous and a bijection over the real numbers, where this is well-known. 

The construction of $\beta_n$ is similar; informally we blow $T^n_K$ onto $I^n_K$ from the point $\mathbf c=(0,\frac{1}{2},\ldots,\frac{1}{2})\in T^n_K$. More explicitly we have
$\beta_n(\underline x)=\underline x$ for every $\underline x=(x_1,\ldots,x_n)\in T^n_{0,K}$, while for every $\underline x=(x_1,\ldots,x_n)\in T^n_K-T^n_{0,K}$ we have:
$$\beta_n(\underline x)=\frac{s_u(\underline x)}{s_l(\underline x)}\cdot(\underline x-\mathbf c)+\mathbf c,$$
where $s_l(\underline x),s_u(\underline x)$ are the unique positive elements of $K$ such that $s_l(\underline x)\cdot(\underline x-\mathbf c)+\mathbf c\in\partial T^n_K$ and $s_u(\underline x)\cdot(\underline x-\mathbf c)+\mathbf c\in I^n_K$. We may argue as above to see that $\beta_n$ is a semi-algebraic map which is an isomorphism between the pairs
$(T^n_K,\partial T^n_K)$ and $(I^n_K,\partial I^n_K)$.
\end{proof}
\begin{defn}\label{2.13} Let $M$ be a pointed $lsa$-space. We say that two points $x,y\in M$ are path-connected if there is a map $\phi:I_K\to M$ of $lsa$-spaces such that $\phi(0)=x$ and $\phi(1)=y$. This is an equivalence relation, and we will denote by $\pi_0(M)$ its equivalence classes. Now let $(M,m)$ be a pointed $lsa$-space. For every positive integer $n$ let $\pi_n(M,m)$ denote the homotopy classes of maps in
$\mathcal{LSA}^{p}_K$:
$$(\Delta_K^n,\partial\Delta_K^n)\longrightarrow (M,m).$$
Let $\pi^n_k:\Delta^n_K\to \Lambda^n_{k,K}$ be the semi-algebraic map constructed in the proof of Lemma \ref{2.6}. Given two such maps $f,g:(\Delta_K^n,\partial\Delta_K^n)\longrightarrow (M,m)$ we define their composition $f*g$ as the composition of the inclusion $\Delta^n_K\to\Delta^{n+1}_K$ given by the rule
$$(x_0,x_1,\ldots,x_n)\mapsto(x_0,x_1,\ldots,x_n,0),$$
the map $\pi^{n+1}_{n+1}:\Delta^{n+1}_K\to
\Lambda^{n+1}_{n+1,K}$, and the map $\Lambda^{n+1}_{n+1,K}\to M$ given by the rule:
$$f*g(x_0,x_1,x_2,\ldots,x_{n+1})=\begin{cases}
f(x_1,x_2,\ldots,x_{n+1}) & \text{, if } x_0=0,\\
g(x_0,x_2,\ldots,x_{n+1})  & \text{, if } x_1=0,\\
m & \text{, otherwise. } \end{cases}$$
This map is well-defined, semi-algebraic and its homotopy class
$[f*g]\in\pi_n(M,m)$ only depends on the homotopy classes $[f],[g]\in\pi_n(M,m)$ and hence this composition induces a binary operation on $\pi_n(M,m)$ which makes $\pi_n(M,m)$ into a group. All these can be seen directly but also follow from the lemma below.
\end{defn}
\begin{lemma}\label{2.14} $(i)$ For every $lsa$-space $M$ there is a natural bijection $\kappa_0^M:\pi_0(M)\to\pi_0(S_*(M))$. 

$(ii)$ Let $(M,m)$ be a pointed $lsa$-space. Then there is a natural group isomorphism $\kappa_i^{M,m}:\pi_i(M,m)\to\pi_i(S_*(M),s_m)$ for every positive integer $i$. 
\end{lemma}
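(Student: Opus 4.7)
The plan is to leverage the Yoneda identification $S_n(M)=\mathrm{Hom}_{\mathcal{LSA}}(\Delta^n_K,M)$ together with the Kan property of Lemma \ref{2.6}, which gives the usual combinatorial description of $\pi_n(S_*(M),s_m)$ in terms of $n$-simplices with degenerate faces modulo bounding an $(n+1)$-simplex with prescribed face conditions. Part $(i)$ is then essentially tautological: both $\pi_0(M)$ and $\pi_0(S_*(M))$ are quotients of the set $M=S_0(M)$, where two points are identified in the former iff there is a semi-algebraic path $I_K\to M$ joining them, and in the latter iff there is a $1$-simplex $\Delta^1_K\to M$ with them as its two vertices; the pair-isomorphism $\lambda_1$ of Lemma \ref{2.12} matches the two equivalence relations, giving the bijection $\kappa_0^M$.

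For part $(ii)$ I define $\kappa_i^{M,m}$ as follows. A map $f:(\Delta^n_K,\partial\Delta^n_K)\to(M,m)$ is the same datum as an $n$-simplex $f\in S_n(M)$ whose restriction to each geometric face of $\Delta^n_K$ is the constant map at $m$, since $f(\partial\Delta^n_K)=\{m\}$; by Yoneda this is in turn a pointed simplicial map $\bar f:(\Delta^n,\partial\Delta^n)\to(S_*(M),s_m)$, which represents a class in $\pi_n(S_*(M),s_m)$. Well-definedness under pair-homotopy follows from the argument of Lemma \ref{2.10}: a pair-homotopy $H:\Delta^n_K\times I_K\to M$ produces the simplicial homotopy $S_*(H)\circ(\mathrm{id}\times\iota_1):S_*(\Delta^n_K)\times\Delta^1\to S_*(M)$; because $H$ maps $\partial\Delta^n_K\times I_K$ to $\{m\}$, the restriction of this homotopy to $\partial\Delta^n\times\Delta^1$ is constant at $s_m$, so it is a homotopy relative to $\partial\Delta^n$. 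Surjectivity is immediate: since $S_*(M)$ is Kan, any class in $\pi_n(S_*(M),s_m)$ is represented by some $\sigma\in S_n(M)$ with all faces degenerate at $s_m$, and the corresponding semi-algebraic map is a map of pairs $(\Delta^n_K,\partial\Delta^n_K)\to(M,m)$ lifting $[\sigma]$.

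The main obstacle is injectivity, which requires converting a combinatorial Kan equivalence back into a genuine semi-algebraic homotopy. If $\kappa_i^{M,m}[f]=\kappa_i^{M,m}[g]$ then, by the Kan property, there exists an $(n+1)$-simplex $w:\Delta^{n+1}_K\to M$ with $d_0w=f$, $d_1w=g$ and $d_iw$ constant at $m$ for $i\geq 2$. I will exhibit the explicit semi-algebraic map $\phi:\Delta^n_K\times I_K\to\Delta^{n+1}_K$,
\[
\phi((y_0,\ldots,y_n),t)=((1-t)y_0,\,ty_0,\,y_1,\ldots,y_n),
\]
whose restriction at $t=0$ agrees with the geometric face inclusion into the $d_1$-face of $\Delta^{n+1}_K$ and at $t=1$ with the inclusion into the $d_0$-face; one checks that $\phi$ sends $\partial\Delta^n_K\times I_K$ into the locus on which $w$ takes the value $m$, because if $y_j=0$ with $j\geq 1$ the image has $(j{+}1)$-st coordinate zero and thus lies on a face on which $w$ is constant at $m$, while if $y_0=0$ the image lies in $d_0\cap d_1$ of $\Delta^{n+1}_K$ and both $f$ and $g$ send the resulting point of $\partial\Delta^n_K$ to $m$. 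Thus $H=w\circ\phi$ is a semi-algebraic pair-homotopy from $g$ to $f$, proving $[f]=[g]$ in $\pi_n(M,m)$. Finally, $\kappa_i^{M,m}$ is a group homomorphism because the formula in Definition \ref{2.13} defines $[f]\cdot[g]$ as the missing $(n{+}1)$-th face of a specific filler of the horn $\Lambda^{n+1}_{n+1,K}$ whose $0$-th and $1$-st faces are $f$ and $g$ and whose other faces are constant at $m$, which is exactly the horn-filler description of the Kan-complex group operation on $\pi_n(S_*(M),s_m)$; independence of the chosen filler is guaranteed by the Kan property. Naturality of $\kappa_i^{M,m}$ is formal from Yoneda.
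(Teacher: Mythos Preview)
Your proof is correct and fleshes out exactly what the paper sketches: the paper's own argument is merely ``claim $(i)$ is immediate from the definitions'' and ``part $(ii)$ could be proved the same way as Lemma \ref{2.10}; we leave the details to the reader''. You have supplied those details --- the Kan description of $\pi_n$, the Lemma \ref{2.10} mechanism for well-definedness, the explicit prism map $\phi$ for injectivity, and the identification of the operation in Definition \ref{2.13} with a horn-filler in $S_*(M)$ --- all in line with the paper's indicated approach.
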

\begin{proof} Since $S_*(M)$ is fibrant by Lemma \ref{2.6}, claim $(i)$ is immediate from the definitions. Part $(ii)$ could be proved the  same way as Lemma \ref{2.10}. We leave the details to the reader. 
\end{proof}
\begin{defn} Let $L$ be a real closed extension of $K$. Let $V\subset K$ be a semi-algebraic set over $K$ defined by a formula $\phi$. We define the extension $V_L\subset L^n$ as the semi-algebraic set defined by the formula $\phi$ over $L$. By model completeness $V_L$ is independent of the choice of $\phi$. Similarly we define the extension $f_L:V_L\to W_L$ of a semi-algebriac map $f:V\to W$ as the unique semi-algebraic map over $L$ whose graph $\Gamma_{f_L}\subset V_L\times W_L$ is the extension of the graph
$\Gamma_f\subset V\times W$ of $f$. This way we get a functor $e_{L/K}:\mathcal{SA}_K\to\mathcal{SA}_L$ which we will call the extension functor. In Example 2.10 of \cite{DeKn2} on page 19 a functor $e_{L/K}:\mathcal{LSA}_K\to
\mathcal{LSA}_L$ is constructed extending (or generalising) this one which commutes with direct limits. By the usual abuse of language we also call this the extension functor from $K$ to $L$.
\end{defn}
\begin{defn} By slight abuse of notation let $e_{L/K}$ also denote the unique functor $\mathcal{LSA}^0_K\to\mathcal{LSA}^0_L$ such that for every object $(M,m)$ of $\mathcal{LSA}^0_K$ we have $e_{L/K}(M,m)=(e_{L/K}(M),m_{L/K})$, where $m_{L/K}\in e_{L/K}(M)$ is the unique point such that $e_{L/K}(s_m)=s_{m_{L/K}}$.  By the functoriality of $e_{L/K}$ there is a natural map $b^M_{L/K}:S_*(M)\to S_*(e_{L/K}(M))$ of simplicial sets for every object $M$ of $\mathcal{LSA}_K$, and similarly there is a natural map $b^{M,m}_{L/K}:S_*(M,s_m)\to S_*(e_{L/K}(M),s_{m_{L/K}})$ of pointed simplicial sets for every object
$(M,m)$ of $\mathcal{LSA}^0_K$.
\end{defn}
\begin{defn} The strong topology $\mathfrak{T}(M)$ on a locally semi-algebraic space $M$ is the topology generated by the open subsets of the Grothendieck topology $\mathfrak{S}(M)$ in the usual sense (see the paragraph after Proposition 3.11 of \cite{DeKn2} on page 31). This construction is functorial. For every $M$ as above let $M^{top}$ denote the topological space $(M,\mathfrak{T}(M))$ and let $S^{top}_*(M)$ denote the singular simplicial set of $M^{top}$. When $K$ is the real number field $\mathbb R$ there is a natural simplicial map $t^M:S_*(M)\to S^{top}_*(M)$ since every semi-algebraic map is continuous with respect to the strong topology. Similarly for a pointed locally semi-algebraic space $(M,m)$ the pointed simplicial set $(S^{top}_*(M),s_m)$ is well-defined and when $K=\mathbb R$ there is a natural simplicial map $t^{M,m}:(S_*(M),s_m)\to(S^{top}_*(M),s_m)$.
\end{defn}
\begin{defn} Next we are going to recall the natural analogues of some basic notions of general topology in our setting. A locally semi-algebraic subspace $N$ of a $lsa$-space $M$ is a subset $N\subset M$ such that for some admissible covering $N=\bigcup_{\alpha}N_{\alpha}$ by $lsa$-spaces associated to semi-algebraic sets the intersection $N\cap M_{\alpha}$ is a semi-algebraic subset for each $\alpha$. Following Definition 1 of section 4 of \cite{DeKn2} on page 42 we will say that a locally semi-algebraic space $M$ is regular if for every locally semi-algebraic subset $A$ of $M$ closed with respect to the strong topology and every point $x$ in $M-A$ there exists sets $U,V\subset M$ open with respect to the strong topology such that $x\in U$, $A\subset V$ and $U\cap V=\emptyset$. After Definition 2 of section 4 of \cite{DeKn2} on page 43 we will say that a locally semi-algebraic space $M$ is
paracompact if there is a locally finite covering admissible $M=\bigcup_{\alpha}M_{\alpha}$ by $lsa$-spaces associated to semi-algebraic sets. (Recall that a covering $M=\bigcup_{\alpha}M_{\alpha}$ is locally finite if for every $\alpha$ the set $\{\beta\mid M_{\alpha}\cap M_{\beta}\neq\emptyset\}$ is finite.)
\end{defn}
\begin{thm}\label{basic_comparison} (i) Let $L$ be a real closed extension of $K$. For every regular, paracompact $lsa$-space $M$ over $K$ the natural map $b^M_{L/K}:S_*(M)\to S_*(e_{L/K}(M))$ is a weak equivalence in $SSets$. Moreover for every $m\in M$ the natural map $b^{M,m}_{L/K}:S_*(M,s_m)\to S_*(e_{L/K}(M),s_{m_{L/K}})$ is a weak equivalence in $SSets^0$.

(ii) Assume that $K$ is the real number field $\mathbb R$. For every regular, paracompact $lsa$-space $M$ over $K$ the natural map $t^M:S_*(M)\to S^{top}_*(M)$ is a weak equivalence in $SSets$. Moreover for every $m\in M$ the natural map $t^{M,m}:S_*(M,s_m)\to S^{top}_*(M,s_m)$ is a weak equivalence in $SSets^0$.
\end{thm}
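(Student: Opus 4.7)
The plan is to reduce both parts to a comparison of homotopy groups, translated from the simplicial to the semi-algebraic (resp.\ topological) setting via Lemma \ref{2.14}, and then to invoke semi-algebraic triangulation to recast the problem in purely combinatorial terms. Since $S_*(M)$ and $S_*(e_{L/K}(M))$ are Kan complexes by Lemma \ref{2.6}, and $S^{top}_*(M)$ is Kan classically, a map between any of the above pairs is a weak equivalence if and only if it induces a bijection on $\pi_0$ and an isomorphism on $\pi_n$ at each basepoint for all $n\geq 1$. By Lemma \ref{2.14} and its classical topological analogue, we may identify these simplicial homotopy groups with the semi-algebraic (resp.\ classical topological) homotopy groups of Definition \ref{2.13}.

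The central technical input is the Delfs--Knebusch triangulation theorem, according to which every regular paracompact $lsa$-space $M$ over $K$ is isomorphic in $\mathcal{LSA}_K$ to the geometric realisation $|\Sigma|_K$ of a strictly locally finite simplicial complex $\Sigma$. Because $\Delta^n_K$ and $\Delta^n_K\times I_K$ are semi-algebraic and semi-algebraically complete, every semi-algebraic map from either of them into $|\Sigma|_K$ has image in the realisation of a finite subcomplex; hence in computing each $\pi_n(M,m)$ one may assume that $\Sigma$ is finite. Extension of scalars turns $|\Sigma|_K$ into $|\Sigma|_L$ with identical combinatorial data, and when $K=\mathbb{R}$ the strong topology $\mathfrak{T}(|\Sigma|_{\mathbb{R}})$ coincides with the usual CW topology on $|\Sigma|$.

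With these reductions in hand, both parts will follow from the semi-algebraic simplicial approximation theorem (again in the Delfs--Knebusch framework): any map $(\Delta^n_K,\partial\Delta^n_K)\to(|\Sigma|_K,m)$ is homotopic relative to $\partial\Delta^n_K$, after a suitable subdivision of $\Delta^n_K$, to a simplicial map, and the same holds for homotopies between such maps. Since a simplicial map is purely combinatorial data, it is preserved by base change; and since any simplicial map is automatically both semi-algebraic and topologically continuous with a common notion of simplicial homotopy, the surjectivity and injectivity of $b^{M,m}_{L/K}$ and $t^{M,m}$ on every $\pi_n$ are immediate. The bijection on $\pi_0$ follows by the analogous argument applied to $I_K$ in place of $\Delta^n_K$, and the pointed refinements come for free from the corresponding assertions for homotopies that fix the basepoint.

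The principal obstacle is not the argument itself but the full strength of the semi-algebraic triangulation and simplicial approximation machinery over an arbitrary real closed field on which it rests; these ingredients require serious work inside the Delfs--Knebusch framework, after which the rest of the proof is essentially formal.
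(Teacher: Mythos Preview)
Your proposal is correct and shares its opening reduction with the paper: both arguments use Lemma~\ref{2.6} to note that the simplicial sets in play are Kan, so that weak equivalence can be tested on $\pi_0$ and on $\pi_n$ at each basepoint, and both invoke Lemma~\ref{2.14} (and its classical analogue) to translate the simplicial homotopy groups into the semi-algebraic (resp.\ topological) homotopy groups of Definition~\ref{2.13}.

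The divergence is in how the resulting comparison of homotopy groups is established. The paper does not reprove this but quotes it as a black box: after using Lemma~\ref{2.12} to pass between the $(\Delta^n_K,\partial\Delta^n_K)$ and $(I^n_K,\partial I^n_K)$ models of $\pi_n$, it cites Theorem~6.3 of \cite{DeKn2} for the invariance of $\pi_n$ under extension of the real closed base field, and Theorem~6.4 of \cite{DeKn2} for the comparison with the topological $\pi_n$ over $\mathbb R$, then concludes by chasing two small commutative squares. Your route instead sketches the content of those Delfs--Knebusch theorems from first principles---triangulation of regular paracompact $lsa$-spaces, reduction to finite subcomplexes via semi-algebraic completeness of $\Delta^n_K$, and semi-algebraic simplicial approximation---so that the base-change and semi-algebraic/topological comparisons become statements about combinatorial data. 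This buys you a self-contained outline at the cost of invoking heavier machinery (the full triangulation and approximation package) explicitly; the paper's version is shorter and makes transparent exactly which external results carry the weight.
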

\begin{proof} Let $K$, $L$ and $M$ be the same as in part $(i)$. Since $S_*(M)$ and $S_*(e_{L/K}(M))$ are fibrant by Lemma \ref{2.6}, it will be sufficient to prove that the map $\pi_0(b^M_{L/K}):\pi_0(S_*(M))\to
\pi_0(S_*(e_{L/K}(M)))$ induced by $b^M_{L/K}$ and for every $m\in M$  and positive integer $i$  the homomorphism $\pi_i(b^{M,m}_{L/K}):\pi_i(S_*(M),s_m)\to\pi_i(S_*(e_{L/K}(M)),s_{m_{L/K}})$ induced by $b^{M,m}_{L/K}$ are bijections.  The functor $e_{L/K}$ induces a map $\pi_{0,L/K}^M:\pi_0(M)\to\pi_0(e_{L/K}(M))$ of sets and for every $m\in M$  and positive integer $i$  a homomorphism $\pi^{M,m}_{i,L/K}:\pi_i(M,m)\to\pi_i(e_{L/K}(M),m_{L/K})$ of groups. We claim that these maps are isomorphisms. Indeed they are obviously homomorphisms, and they are bijections by Lemma \ref{2.12} and by Theorem 6.3 of \cite{DeKn2} on pages 270-271. Since the diagrams:
$$\!\!
\xymatrix{\pi_0(M)\ar[r]^-{\pi_{0,L/K}^M}\ar[d]_{\kappa_0^M} & \pi_0(e_{L/K}(M))\ar[d]^{\kappa_0^{e_{L/K}(M)}} \\
\pi_0(S_*(M))\ar[r]^-{\pi_0(b^M_{L/K})} &  \pi_0(S_*(e_{L/K}(M))}
\ \ 
\xymatrix{\pi_i(M,m)\ar[r]^-{\pi^{M,m}_{i,L/K}}\ar[d]_{\kappa_i^{M,m}} & \pi_i(e_{L/K}(M),m_{L/K})\ar[d]^{\kappa^{e_{L/K}(M),m_{L/K}}_i} \\
\pi_i(S_*(M),s_m)\ar[r]^-{\pi_i(b^{M,m}_{L/K})} &
\pi_i(S_*(e_{L/K}(M),s_{m_{L/K}})}$$
are commutative, claim $(i)$ follows from Lemma \ref{2.14}.

The proof of claim $(ii)$ is very similar: for every such $M$ it will be sufficient to prove that the map $\pi_0(t^M):\pi_0(S_*(M))\to
\pi_0(S^{top}_*(M))$ induced by $t^M$ and for every $m\in M$ and positive integer $i$ the homomorphism $\pi_i(t^{M,m}):\pi_i(S_*(M),m)\to\pi_i(S^{top}_*(M),m)$ induced by $t^{M,m}$ are bijections. There are natural maps
$\tau^M_0:\pi_0(M)\to\pi_0(M^{top})$ of sets and for every $m\in M$  and positive integer $i$ a homomorphism $\tau^{M,m}_i:\pi_i(M,m)\to\pi_0(M^{top},m)$ of groups, which are obviously homomorphisms, and which are also bijections by Lemma \ref{2.12} and Theorem 6.4 of \cite{DeKn2} on page 271. Since the diagrams:
$$\!\!
\xymatrix{\pi_0(M)\ar[r]^-{\tau_0^M}\ar[d]_{\kappa_0^M} & \pi_0(M^{top})\ar[d]^{\lambda_0^M} \\
\pi_0(S_*(M))\ar[r]^-{\pi_0(t^M)} &  \pi_0(S^{top}_*(M))}
\ \ 
\xymatrix{\pi_i(M,m)\ar[r]^-{\tau^{M,m}_i}\ar[d]_{\kappa_i^{M,m}} & \pi_i(M^{top},m)\ar[d]^{\lambda^{M,m}_i} \\
\pi_i(S_*(M),s_m)\ar[r]^-{\pi_i(t^{M,m})} &
\pi_i(S^{top}_*(M),s_m)}$$
are commutative, where the left vertical arrows $\lambda^M_0$ and
$\lambda^{M,m}_i$ are the usual comparison maps between the topological and simplicial connected components and homotopy groups, respectively. Since these maps are bijections, claim $(ii)$ follows from from Lemma \ref{2.14}.
\end{proof}

\section{Closed simplicial complexes}

\begin{defn} A closed simplicial complex is a pair $A=(A_v,A_s)$ such that
\begin{enumerate}
\item[$(i)$] each element of $A_s$ is a finite subset of $A_v$,
\item[$(ii)$] the union of $A_s$ is $A_v$, 
\item[$(iii)$] if $B$ is a subset of an element of $A_s$ then $B\in A_s$.
\end{enumerate}
Note that these objects are simply called simplicial complexes in homotopy theory, but we will follow this terminology used in real algebraic geometry, in order to avoid confusion. As usual we will call $A_v$ the set of vertices of $A$, and $A_s$ the set of simplexes of $A$, respectively. We say that $A$ is finite if $A_v$ is. Closed simplicial complexes form a category
$\mathcal{SC}$ where a morphism between two objects $A=(A_v,A_s)$ and $B=(B_v,B_s)$ is a map $f:A_v\to B_v$ such that $f(C)\in B_s$ for every $C\in A_s$. There is a functor $A\mapsto A_*$ from $\mathcal{SC}$ to $SSets$ which is defined as follows:
$$A_n=\{\phi\in\textrm{Hom}_{Sets}(\underline n,A_v)\mid
\textrm{Im}(\phi)\in A_s\}$$
and for every $\psi\in\textrm{Hom}_{\mathbf{\Delta}}(\underline n,\underline m)$ the corresponding map $A_m\to A_n$ is $\phi\mapsto\psi\circ\phi$. This functor is fully faithful, and with its aid we will consider $\mathcal{SC}$ as a full subcategory of $SSets$. 
\end{defn}
\begin{defn}\label{3.5} Let $\mathbf v_0,\mathbf v_1,\ldots,\mathbf v_k\in K^n$. The convex hull of a $k$-tuple $\mathbf v_0,\ldots,\mathbf v_k\in K^n$ is the image of the map
$$\Delta^k_K\to K^n,\quad(x_0,x_1,\ldots,x_k)\mapsto
\sum_{j=0}^kx_j\mathbf v_k.$$
If $C$ is the finite set $\{\mathbf v_0,\mathbf v_1,\ldots,\mathbf v_k\}$, we let $[C]$ denote the convex hull of $\mathbf v_0,\ldots,\mathbf v_k$. Now let $A=(A_v,A_s)$ be a finite closed simplicial complex. Let $K^{A_v}$ be the finite dimensional $K$-linear vector space spanned formally by $A_v$. The realisation $|A|_K$ of $A$ over $K$ is the union of the convex hulls $[C]$ for every $C\in A_s$. As the union of finitely many semi-algebraic sets, the subset $|A|_K\subset K^{A_v}$ is also semi-algebraic. 
\end{defn}
\begin{lemma}\label{piecewise} For every finite closed simplicial complex $A$ and lsa-space $M$ a map $f:|A|_K\to M$ is semi-algebraic if and only if for every $C\in A_s$ the restriction $f|_{[C]}$ is semi-algebraic. 
\end{lemma}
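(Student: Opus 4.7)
The plan is to prove the two directions separately. The forward (``only if'') direction is immediate from functoriality: for each $C\in A_s$ the inclusion $i_C:[C]\hookrightarrow |A|_K$ is a morphism in $\mathcal{SA}_K\subset\mathcal{LSA}_K$ (the source is by definition a semi-algebraic subset of the target), so $f|_{[C]}=f\circ i_C$ is semi-algebraic as a composition of semi-algebraic maps.

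For the converse, I would first establish continuity of $f$ on $|A|_K$ with respect to the strong topology. Since $|A|_K=\bigcup_{C\in A_s}[C]$ is a finite union, each $[C]$ is closed in $|A|_K$ (being closed and bounded in the ambient finite-dimensional affine space $K^{A_v}$), and each restriction $f|_{[C]}$ is continuous by hypothesis, the classical pasting lemma for finite closed covers yields continuity of $f$.

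Next I would reduce the problem to the case of a semi-algebraic target. Each $[C]$ is closed and bounded in $K^{A_v}$, hence definably compact, so each image $f([C])$ is a definably compact semi-algebraic subset of $M$. Combining the admissibility of the semi-algebraic covering of $M$ with the finiteness of $A_s$, one sees that $f(|A|_K)$ is contained in a finite union of semi-algebraic subspaces of $M$. Semi-algebraicity is local on the target, so working piece by piece along this finite cover, and using that a map into a semi-algebraic subset of $K^n$ is semi-algebraic iff all of its coordinate projections are, the problem reduces to the case $M=K$.

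In that case we need only verify that the graph of $f$ is semi-algebraic, which is clear: $\Gamma_f=\bigcup_{C\in A_s}\Gamma_{f|_{[C]}}$ is a finite union of semi-algebraic subsets of $|A|_K\times K$, hence semi-algebraic. Combined with continuity, this shows $f$ is semi-algebraic in the sense of Definition \ref{2.1}. The main technical obstacle is the reduction to a semi-algebraic target: one has to check carefully that the piecewise-defined data along the finite closed cover $\{[C]\}_{C\in A_s}$ actually assemble into a morphism of ringed spaces, rather than a merely pointwise-defined map. Definable compactness of each $[C]$ together with the finiteness of $A_s$ is precisely what makes this reduction go through; without these the image of $f$ could in principle meander through infinitely many pieces of an admissible cover of $M$, and the local-to-global argument would fail.
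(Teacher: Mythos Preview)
Your proof is correct and follows essentially the same route as the paper's: establish continuity of $f$ via the finite closed cover $\{[C]\}_{C\in A_s}$, reduce to a semi-algebraic target by arguing that finitely many charts of an admissible cover of $M$ suffice, and then verify the graph condition as a finite union of semi-algebraic sets. The only differences are stylistic: the paper pulls the admissible cover $\{M_\alpha\}$ back along each $f|_{[C]}$ and invokes directly the defining property of the Grothendieck topology on a semi-algebraic set (every admissible cover has a finite subcover), rather than phrasing this as definable compactness of $[C]$ and its image; and the paper stops at the level of a semi-algebraic chart $M_\alpha$ rather than reducing further to $M=K$ via coordinate projections. Your detour through compactness and the extra reduction to $M=K$ are harmless but unnecessary; the paper is also a bit more explicit in checking that the preimages $f^{-1}(M_\alpha)$ are open \emph{and} semi-algebraic (as finite unions $\bigcup_{C}f^{-1}(M_\alpha)\cap[C]$), so that they genuinely form an admissible cover of $|A|_K$ --- a point you gesture at but do not spell out.
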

\begin{proof} Since the composition of semi-algebraic maps is semi-algebraic, the first condition implies the second, so we only need to show the converse. Let $Z\subseteq M$ be a closed subset. Then $f|^{-1}_{[C]}(Z)$ is closed in $[C]$ by assumption for every $C\in A_s$. But $[C]$ is closed in $|A|_K$, and hence $f|^{-1}_{[C]}(Z)$ is closed in $|A|_K$, too. Therefore the finite union
$$\bigcup_{C\in A_s}f|^{-1}_{[C]}(Z)=
\bigcup_{C\in A_s}f^{-1}(Z)\cap[C]=f^{-1}(Z)$$
is closed, too. We get that $f$ is continuous. Now let $\{M_{\alpha}\}_{\alpha\in I}$ be an admissible covering of $M$ such that for every $\alpha\in I$ the ringed space
$(M_{\alpha},\mathcal O_M|_{M_{\alpha}})$ is a semi-algebraic space. For every $C\in A_s$ the covering
$\{f^{-1}(M_{\alpha})\cap[C]\}_{\alpha\in I}$ of $[C]$ is admissible, so there is a finite subset $I(C)\subseteq I$ such that $\{f^{-1}(M_{\alpha})\cap[C]\}_{\alpha\in I(C)}$ already covers $[C]$. Since $A_s$ is finite, we get that there is a finite subset $J\subseteq I$ such that $\{f^{-1}(M_{\alpha})\}_{\alpha\in J}$ already covers $|A|_K$. Each $f^{-1}(M_{\alpha})$ is open, as $f$ is continuous, and since it is a finite union of semi-algebraic sets:
$$f^{-1}(M_{\alpha})=\bigcup_{C\in A_s}f^{-1}(M_{\alpha})\cap[C],$$
it is semi-algebraic, too. Therefore $\{f^{-1}(M_{\alpha})\}_{\alpha\in I}$ is an admissible cover of $|A|_K$. Therefore it will be enough to check that for every $\alpha\in I$ the map
$$f|_{f^{-1}(M_{\alpha})}:f^{-1}(M_{\alpha})\to M_{\alpha}$$
is semi-algebraic. Since it is continuous, and $f^{-1}(M_{\alpha})$ is semi-algebraic, it will be enough to check that its graph is semi-algebraic. But it is the finite union of the graphs of the maps
$$f|_{f^{-1}(M_{\alpha})\cap[C]}:f^{-1}(M_{\alpha})\cap[C]\to M_{\alpha}\cap f([C]),$$
which are semi-algebraic.
\end{proof}
\begin{defn}\label{3.6} Given a morphism $f:A_v\to B_v$ between two finite closed simplicial complexes $A=(A_v,A_s)$ and $B=(B_v,B_s)$ we are going to construct a semi-algebraic map 
$|f|_K:|A|_K\to|B|_K$ as follows. Let $\mathbf w\in|A|_K$ be arbitrary. Let $C\in A_s$ be such that $\mathbf w\in[C]$. Write $C=\{\mathbf v_0,\mathbf v_1,\ldots,\mathbf v_k\}$. Then
$$\mathbf w=\sum_{j=0}^kx_j\mathbf v_j\textrm{ for some $(x_0,x_1,\ldots,x_k)\in\Delta^k_K$.}$$
We set:
\begin{equation}\label{3.6.1}
|f|_K(\mathbf w)=\sum_{j=0}^kx_jf(\mathbf v_j).
\end{equation}
\end{defn}
\begin{lemma} The map $|f|_K:|A|_K\to|B|_K$ is well-defined.
\end{lemma}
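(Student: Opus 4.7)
The plan is to exploit the fact that $A_v$ is by construction a basis of the $K$-vector space $K^{A_v}$, which forces the barycentric coordinates of a point in any simplex $[C]$ to be uniquely determined, and makes the intersection of two simplices behave well.

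First, I would note that for a single $C=\{\mathbf v_0,\ldots,\mathbf v_k\}\in A_s$, the coefficients $(x_0,\ldots,x_k)\in\Delta^k_K$ with $\mathbf w=\sum_jx_j\mathbf v_j$ are uniquely determined by $\mathbf w$, since $\mathbf v_0,\ldots,\mathbf v_k$ are distinct elements of the basis $A_v$, hence linearly independent in $K^{A_v}$. Thus the right-hand side of (\ref{3.6.1}) depends only on $\mathbf w$ and $C$, not on the chosen ordering or representation within $C$.

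The remaining issue is that $\mathbf w$ may lie in $[C]\cap[C']$ for two different $C,C'\in A_s$. Here I would argue as follows. Suppose
$$\mathbf w=\sum_{\mathbf v\in C}x_{\mathbf v}\mathbf v=
\sum_{\mathbf v\in C'}x'_{\mathbf v}\mathbf v$$
with $(x_{\mathbf v}),(x'_{\mathbf v})$ families of nonnegative coordinates summing to $1$. Because $A_v$ is a basis of $K^{A_v}$, the coefficient of each basis vector in the expansion of $\mathbf w$ is intrinsic; therefore for any $\mathbf v\in C\setminus C'$ we must have $x_{\mathbf v}=0$, and symmetrically $x'_{\mathbf v}=0$ for $\mathbf v\in C'\setminus C$. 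Consequently both sums are supported on $C\cap C'$, and the coefficients on this common support coincide. By axiom $(iii)$ in the definition of a closed simplicial complex, $C\cap C'$ is itself an element of $A_s$ (it is a subset of $C\in A_s$), so this is a legitimate simplex of $A$.

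Applying $f$ then gives
$$\sum_{\mathbf v\in C}x_{\mathbf v}f(\mathbf v)=
\sum_{\mathbf v\in C\cap C'}x_{\mathbf v}f(\mathbf v)=
\sum_{\mathbf v\in C'}x'_{\mathbf v}f(\mathbf v),$$
so the two candidate values of $|f|_K(\mathbf w)$ agree. Hence $|f|_K$ is well-defined. I do not expect any serious obstacle: the content is just the uniqueness of coordinates in a basis together with closure of $A_s$ under taking subsets; the value of $|f|_K(\mathbf w)$ lands in $[f(C)]\subseteq|B|_K$ since $f(C)\in B_s$ by the definition of a morphism in $\mathcal{SC}$.
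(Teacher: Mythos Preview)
Your proof is correct and follows essentially the same approach as the paper: both arguments use that $A_v$ is a basis of $K^{A_v}$ to force the barycentric coordinates of $\mathbf w$ to be supported on $C\cap C'$ (the paper writes $C\cap D$) and to agree there, whence the two candidate values of $|f|_K(\mathbf w)$ coincide. The only cosmetic difference is that the paper first reduces by symmetry to the case $D\subseteq C$ before invoking the basis property, whereas you handle the two simplices simultaneously.
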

\begin{proof} For every $C\in A_s$ such that $\mathbf w\in[C]$ let
$|f|^{C}_K(\mathbf w)$ denote the right hand side of (\ref{3.6.1}) for the moment. We need to show that for every other $D\in A_s$ such that $\mathbf w\in[D]$ we have $|f|^{C}_K(\mathbf w)=
|f|^{D}_K(\mathbf w)$. Note that $C\cap D\in A_s$ so by symmetry it will be enough to show that $|f|^{C}_K(\mathbf w)=
|f|^{C\cap D}_K(\mathbf w)$. In other words we may assume without the loss of generality that $D\subseteq C$. Write $C=\{\mathbf v_0,\mathbf v_1,\ldots,\mathbf v_k\}$ and let $J\subseteq
\{0,1,\ldots,k\}$ be such that $D=\{\mathbf v_j\mid j\in J\}$. If 
$$\mathbf w=\sum_{j=0}^kx_j\mathbf v_k\textrm{ for some $(x_0,x_1,\ldots,x_k)\in\Delta^k_K$,}$$
then $x_j=0$ for every $j\not\in J$. Therefore
$$|f|^{C}_K(\mathbf w)=\sum_{j=0}^kx_jf(\mathbf v_j)=
\sum_{j\in J}^kx_jf(\mathbf v_j)=|f|^{D}_K(\mathbf w).$$
\end{proof}
It is clear from the definitions and Lemma \ref{piecewise} that the constructions of Definitions \ref{3.5} and \ref{3.6} above furnish a functor $|\cdot|_K:
\mathcal{SC}\to\mathcal{SA}_K$. 
\begin{prop}\label{3.8} Let $f:C\to D$ be a map of finite closed simplicial complexes. Then the following are equivalent:
\begin{enumerate}
\item[$(i)$] the map $f$ is a weak equivalence,
\item[$(ii)$] the map $|f|_K:|C|_K\to|D|_K$ is a weak equivalence for every real closed field $K$, 
\item[$(iii)$] the map $|f|_K:|C|_K\to|D|_K$ is a weak equivalence for a real closed field $K$.
\end{enumerate}
\end{prop}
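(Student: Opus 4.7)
The plan is to reduce to $K=\mathbb{R}$, where the result follows from Theorem \ref{basic_comparison}(ii) together with the classical comparison between a simplicial set and the singular simplicial set of its topological realization.

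First I would verify that $|C|_K$ satisfies the hypotheses of Theorem \ref{basic_comparison}. Since $C$ is finite, $|C|_K$ is a compact semi-algebraic subset of $K^{A_v}$; as a semi-algebraic space it is regular, and the finite admissible cover by its constituent closed simplices gives paracompactness. Moreover realization commutes with base change: for an extension $L/K$ of real closed fields the lsa-space $|C|_L$ is canonically identified with $e_{L/K}(|C|_K)$, since both are cut out of $L^{A_v}$, resp.\ $K^{A_v}$, by the same semi-algebraic formulas. Theorem \ref{basic_comparison}(i) therefore supplies natural weak equivalences $S_*(|C|_K)\to S_*(|C|_L)$ and $S_*(|D|_K)\to S_*(|D|_L)$ compatible with $|f|$. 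Since the real closure $\mathbb{R}_{\mathrm{alg}}$ of $\mathbb{Q}$ embeds as an ordered subfield into every real closed field, any two of $|f|_K, |f|_{K'}$ are connected through $|f|_{\mathbb{R}_{\mathrm{alg}}}$ by a zig-zag of such base-change maps, all of which are weak equivalences. This proves $(ii)\Leftrightarrow(iii)$ and reduces the problem to showing that $(i)$ is equivalent to $|f|_\mathbb{R}$ being a weak equivalence.

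For the last step I would construct a natural simplicial map $\iota_C: C \to S_*(|C|_\mathbb{R})$ by sending $\phi: \underline{n} \to A_v$ in $C_n$ to the affine semi-algebraic simplex $(x_0,\ldots,x_n)\mapsto \sum_i x_i \phi(i)$, and similarly $\iota_D$. After identifying the topological space $|C|_\mathbb{R}^{top}$ with the standard CW realization of the simplicial complex $C$, the composite $t^{|C|_\mathbb{R}}\circ \iota_C: C \to S^{top}_*(|C|_\mathbb{R})$ is the canonical unit map of the Quillen adjunction between $SSets$ and topological spaces, hence a weak equivalence. Since $t^{|C|_\mathbb{R}}$ is a weak equivalence by Theorem \ref{basic_comparison}(ii), two-out-of-three forces $\iota_C$ to be a weak equivalence (and similarly $\iota_D$). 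The naturality square for $\iota$ then shows that $f$ is a weak equivalence of simplicial sets iff $S_*(|f|_\mathbb{R})$ is, which (both sides being fibrant by Lemma \ref{2.6}) is by Definition \ref{2.5} the statement that $|f|_\mathbb{R}$ is a weak equivalence of lsa-spaces.

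The main obstacle is the justification that $t^{|C|_\mathbb{R}}\circ \iota_C$ coincides with, or is at least homotopic to, the canonical comparison map from $C$ to the singular simplicial set of its CW realization: this amounts to identifying the strong topology on $|C|_\mathbb{R}$ with the usual CW topology on the realization of the complex $C$, and is essentially bookkeeping once one checks that the affine simplexes $\iota_C(\phi)$ realize precisely the characteristic maps of the standard cells of $|C|^{top}$. Everything else is a direct assembly of Theorem \ref{basic_comparison} with the classical Quillen equivalence between $SSets$ and topological spaces.
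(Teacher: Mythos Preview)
Your proposal is correct and follows the same route as the paper: reduce (ii)$\Leftrightarrow$(iii) to a base-change argument through the real closure of $\mathbb Q$ using Theorem~\ref{basic_comparison}(i), and reduce (i)$\Leftrightarrow$(ii) to the case $K=\mathbb R$ via Theorem~\ref{basic_comparison}(ii) together with the classical comparison between a simplicial set and the singular complex of its realisation. The only difference is that where the paper simply asserts ``by definition $f$ is a weak equivalence if and only if the map $S^{top}_*(|C|_{\mathbb R})\to S^{top}_*(|D|_{\mathbb R})$ is a weak equivalence'', you spell this out explicitly via the map $\iota_C$ (which the paper in fact introduces immediately after this proposition) and the unit of the Quillen adjunction; your more detailed treatment is arguably cleaner, since the paper's ``by definition'' is really an appeal to exactly the fact you prove.
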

\begin{proof} Let $L$ be a real closed extension of $K$. Then the image of $|f|_K:|C|_K\to|D|_K$ under the functor $e_{L/K}$ is $|f|_L:|C|_L\to|D|_L$. So the map $|f|_K$ is a weak equivalence if and only if $|f|_L$ is a weak equivalence by part $(i)$ of Theorem \ref{basic_comparison}. Now let $\mathbb Q^{real}$ denote the real algebraic closure of $\mathbb Q$ with respect to its unique ordering. Then for any real closed field $K$ as above there is an embedding of $\mathbb Q^{real}$ into $K$. By the above $|f|_K$ is a weak equivalence if and only if $|f|_{\mathbb Q^{real}}$ is a weak equivalence, so $(ii)$ and $(iii)$ are equivalent. Note that by definition $f$ is a weak equivalence if and only if the map
$S^{top}_*(|C|_{\mathbb R})\to S^{top}_*(|D|_{\mathbb R})$ induced by $|f|_{\mathbb R}$ is a weak equivalence. So by part $(ii)$ of Theorem \ref{basic_comparison} the morphism $f$ is a weak equivalence if and only if $|f|_{\mathbb R}:|C|_{\mathbb R}\to|D|_{\mathbb R}$ is a weak equivalence. So by the above $(i)$ and $(ii), (iii)$ are equivalent. 
\end{proof}
\begin{defn} Let $A=(A_v,A_s)$ be again a finite closed simplicial complex. We are going to define a map $\iota_A:A_*\to S_*(|A|_K)$ of simplicial sets as follows. For every $\psi:\underline n\to A_v$ in $A_n$ let
$\iota_A(\psi):\Delta^n_K\to|A|_K$ be the semi-algebraic map:
$$(x_0,x_1,\ldots,x_n)\mapsto\sum_{j=0}^nx_j\psi(j),\quad
\forall(x_0,x_1,\ldots,x_n)\in\Delta^k_K.$$
Since for every morphism $\phi:\underline m\to\underline n$ of
$\mathbf{\Delta}$ and $\psi$ as above we have $\iota_A(\psi\circ\phi)=\iota_A(\psi)\circ|\phi|$ (where $|\phi|$ is the semi-algebraic map introduced in Definition \ref{2.4}), the map $\iota_A$ is a cofibration of simplicial sets.
\end{defn}
\begin{prop}\label{3.10} For every finite closed simplicial complex $A$ and lsa-space $M$ the map:
\begin{equation}\label{3.10.1}
\beta_A:\textrm{\rm Hom}(|A|_K,M)\to\textrm{\rm Hom}(A_*,S_*(M))
\end{equation}
furnished by $f\mapsto S_*(f)\circ\iota_A$ is a bijection.
\end{prop}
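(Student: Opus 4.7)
The plan is to build an explicit two-sided inverse to $\beta_A$. Injectivity is nearly immediate: any $C\in A_s$ of cardinality $k+1$ admits an enumeration $\psi_C:\underline k\to C\subset A_v$ (order-preserving, after fixing a total order on $A_v$), and because $A_v$ is a basis of $K^{A_v}$ every finite subset of $A_v$ is affinely independent, so $\iota_A(\psi_C):\Delta^k_K\to|A|_K$ is a semi-algebraic bijection onto $[C]$. Thus $f\circ\iota_A(\psi_C)$ determines $f|_{[C]}$, and since $|A|_K=\bigcup_{C\in A_s}[C]$, two maps $f,f'$ with $\beta_A(f)=\beta_A(f')$ must coincide.

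For surjectivity, given $g:A_*\to S_*(M)$, I define $h_C:[C]\to M$ by $h_C(\sum_j x_j\psi_C(j))=g(\psi_C)(x_0,\ldots,x_k)$, which is well-defined by affine independence of the $\psi_C(j)$. The central check is that the $h_C$'s agree on pairwise intersections. If $\mathbf w\in[C]\cap[C']$, uniqueness of linear combinations in the basis $A_v$ forces all nonzero coordinates to lie in $D:=C\cap C'$, so $\mathbf w\in[D]$; and $D\in A_s$ by axiom $(iii)$. The inclusion $D\subseteq C$ induces an order-preserving injection $\delta:\underline{|D|-1}\to\underline k$ with $\psi_D=\psi_C\circ\delta$, and the simplicial identity $g(\psi_D)=g(\psi_C)\circ|\delta|$ together with the fact that $|\delta|$ parametrises exactly the face $[D]\hookrightarrow[C]$ in barycentric coordinates yields $h_C|_{[D]}=h_D$, and symmetrically $h_{C'}|_{[D]}=h_D$.

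The pieces $h_C$ therefore glue to a well-defined function $f:|A|_K\to M$. Each $h_C$ is a composition of semi-algebraic maps, so $f$ is semi-algebraic by Lemma \ref{piecewise}. To check $\beta_A(f)=g$, I verify $f\circ\iota_A(\psi)=g(\psi)$ for every $\psi\in A_n$: let $C=\mathrm{Im}(\psi)$ and factor $\psi=\psi_C\circ\tau$ with $\tau\in\mathrm{Hom}_{\mathbf{\Delta}}(\underline n,\underline k)$. A direct computation from the formulas in Definition \ref{2.4} yields $\iota_A(\psi)=\iota_A(\psi_C)\circ|\tau|$, the simplicial identity gives $g(\psi)=g(\psi_C)\circ|\tau|$, and combining these with $h_C\circ\iota_A(\psi_C)=g(\psi_C)$ gives the desired equality.

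The main obstacle will be the overlap compatibility of the $h_C$'s: it hinges on combining the linear-algebraic fact $[C]\cap[C']\subseteq[C\cap C']$, coming from the basis structure of $A_v$ in $K^{A_v}$, with the simplicial face-map identity transferring the value of $g(\psi_C)$ down to the common face $[D]$. Once this compatibility is secured, the rest is essentially a Yoneda-style diagram chase.
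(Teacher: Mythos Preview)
Your proof is correct and follows essentially the same route as the paper's: injectivity from the covering $|A|_K=\bigcup_{C\in A_s}[C]$, surjectivity by gluing local maps $h_C$ on each $[C]$ (the paper's $f_C$, obtained there by first treating the base case $A=\mathbf n$), overlap compatibility by reducing to the common face $D=C\cap C'$ via the simplicial face identities, and semi-algebraicity of the glued map via Lemma~\ref{piecewise}. The only difference is presentational---the paper argues through the bijection $\beta_{\mathbf n}$ and a commutative square of restriction maps, whereas you write out the barycentric formulas directly and give a more explicit final verification that $\beta_A(f)=g$.
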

\begin{proof} As usual for every natural number $n$ let
$\Delta^n$ denote the standard $n$-simplex in the category of simplicial sets (see Example 1.7 of \cite{GJ} on page 6). Moreover let $\mathbf n$ denote the closed simplicial complex whose set of vertices is $\underline n$ and whose set of simplexes are all subsets of $\underline n$. Then we have natural isomorphisms:
$$\Delta^n_K\cong|\mathbf n|_K
\quad\textrm{and}\quad
\Delta^n\cong\mathbf n_*$$
which in turn induce natural bijections:
$$a_n:\textrm{\rm Hom}(|\mathbf n|_K,M)\to S_n(M)
\quad\textrm{and}\quad
b_n:\textrm{\rm Hom}(\mathbf n_*,S_*(M))\to
S_n(M).$$
Since the diagram:
$$\xymatrix{
\textrm{\rm Hom}(|\mathbf n|_K,M)\ar[rd]^{a_n}
\ar[dd]^{\beta_{\mathbf n}} &  \\
& S_n(M) \\
\textrm{\rm Hom}(\mathbf n_*,S_*(M))\ar[ru]_{b_n} & }$$
is commutative, we get that the claim is true for $\mathbf n$. Now let us consider the general case. Note that images of the simplexes of $A_*$ under $\iota_A$ are sets of the form $[C]$ where $C$ runs through $A_s$. These sets cover $|A|_K$ so the map $\beta_A$ is injective. So we only have to show that this map is also surjective. Now let $g\in\textrm{\rm Hom}(A_*,S_*(M))$ be arbitrary. For every $C\in A_s$ let $C$ also denote by slight abuse of notation the subcomplex of $A$ whose set of vertices is $C$ and whose set of simplexes are all subsets of $C$. The inclusion $C\hookrightarrow A$ induces an identification of $|C|_K$ with $[C]$, and also a map
$$\textrm{\rm Hom}(A_*,S_*(M))\to
\textrm{\rm Hom}(C_*,S_*(M)).$$
Let $g_C$ denote the image of $g$ under this map. Since $C$ is isomorphic to $\mathbf n$, where $n$ is the cardinality of $C$, there is a unique $f_C\in\textrm{\rm Hom}(|C|_K,M)=
\textrm{\rm Hom}([C],M)$ whose image under $\beta_C$ is $g_C$ by the above. Now let $\mathbf w\in|A|_K$ be arbitrary, and let $C\in A_s$ be such that $\mathbf w\in[C]$. We set:
\begin{equation}\label{3.11.2}
f(\mathbf w)=f_C(\mathbf w).
\end{equation}
\begin{lemma} The map $f:|A|_K\to M$ is well-defined.
\end{lemma}
\begin{proof} We need to show that $f_{C}(\mathbf w)$ is independent of the choice of $C$. Let $D$ be another simplex in $A_s$. We claim that $f_C|_{[C]\cap[D]}=f_D|_{[C]\cap[D]}$. Note that $C\cap D\in A_s$ and
$[C]\cap [D]=[C\cap D]$ so by symmetry it will be enough to show that $f_C|_{[C\cap D]}=f_{C\cap D}$. In other words we may assume without the loss of generality that $D\subseteq C$. Since the diagram:
$$\xymatrix{ \textrm{\rm Hom}(|C|_K,M)
\ar[r]^{\beta_C}\ar[d] & \textrm{\rm Hom}(C_*,S_*(M))\ar[d] \\
\textrm{\rm Hom}(|D|_K,M)\ar[r]^{\beta_D} &
\textrm{\rm Hom}(D_*,S_*(M))}$$
induced by the inclusion map $D\hookrightarrow C$ is commutative, and its horizontal maps are bijections, the claim follows. 
\end{proof}
Note that the restriction of $f$ onto $[C]$ is semi-algebraic for every $C\in A_s$, since it is equal to $f_C$. Therefore by Lemma \ref{piecewise} the map $f$ is semi-algebraic. Clearly $g=S_*(f)\circ\iota_A$, so we get that $\beta_A$ is surjective, too.
\end{proof}

\section{The fibration theorem: locally trivial fibrations}\label{fibration_section1}

\begin{defn} Let $Y_*$ be a simplicial set and let $\mathcal U=\{U_*\mid U_*\subseteq Y_*\}$ be a collection  of simplicial subsets of $Y_*$. We say that a simplicial map $g:R_*\to Y_*$ is subordinate to $\mathcal U$ if for every $y\in R_*$ there is a $U_*\in\mathcal U$ such that $g(y)\in U_*$. The property of maps subordinate to $\mathcal U$ that we will use repeatedly is that if $h$ is such and $f:P_*\to R_*$ is a simplicial map then the composition $g\circ f$ is also subordinate to $\mathcal U$. Now let $p:X_*\to Y_*$ be a map of simplicial sets and let $\mathcal U$ be a collection of simplicial subsets of $Y_*$ as above. We say that a cofibration $i:A_*\to B_*$ has the $\mathcal U$-subordinate left lifting property with respect to $p:X_*\to Y_*$ if for every commutative diagram:
$$\xymatrix{ A_*\ar[r]^{f}\ar[d]_{i} & X_*\ar[d]^{p} \\
B_*\ar[r]^{g} \ar@{.>}[ru]^{h}& Y _*}$$
in $SSets$ such that $g:B\to Y_*$ is subordinate to $\mathcal U$ there is a morphism $h:B_*\to X_*$ which makes the diagram commutative. 
\end{defn}
\begin{lemma}\label{3.2} The class of cofibrations $i:A_*\to B_*$ which has the
$\mathcal U$-subordinate left lifting property with respect to $p:X_*\to Y_*$ is saturated. 
\end{lemma}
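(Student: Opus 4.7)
The plan is to verify that the class is closed under each of the three operations whose joint closure defines saturation: cobase change (pushouts), transfinite composition, and retracts. The crucial input, used repeatedly, is the clause already singled out in the definition above: subordination of a simplicial map to $\mathcal U$ is preserved under precomposition with any simplicial map. Because every new bottom edge that appears in the constructions below arises from the original $v$ by precomposition with some simplicial map, the ordinary saturation arguments carry over unchanged.

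First I would handle cobase change. Given a pushout
$$\xymatrix{A_*\ar[r]^{f}\ar[d]_{i} & C_*\ar[d]^{j} \\ B_*\ar[r]^{\bar f} & D_*}$$
with $i$ in the class, and a lifting problem
$$\xymatrix{C_*\ar[r]^{u}\ar[d]_{j} & X_*\ar[d]^{p} \\ D_*\ar[r]^{v} & Y_*}$$
with $v$ subordinate to $\mathcal U$, I would form the outer lifting problem along $i$ whose bottom edge is $v\circ\bar f$; this is again subordinate to $\mathcal U$ by the observation. The hypothesis on $i$ produces a lift $h:B_*\to X_*$, and the universal property of the pushout then assembles $h$ with $u$ into a map $D_*\to X_*$ that solves the original lifting problem.

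Next I would treat transfinite composition. Suppose $i:A_*\to B_*$ is the transfinite composition of a chain $A_*=B^{(0)}_*\to B^{(1)}_*\to\cdots$ in which each successive map belongs to the class. Given a lifting problem for $i$ with bottom edge $v$ subordinate to $\mathcal U$, I would construct the lift by transfinite induction: at each successor stage the induced lifting problem has bottom edge obtained from $v$ by precomposition with the canonical map $B^{(\alpha+1)}_*\to B_*$, hence still subordinate to $\mathcal U$, so a lift exists by hypothesis; at limit stages the lifts assembled so far are compatible and glue to the next stage by the universal property of the colimit. Closure under retracts is even more formal: a retract diagram exhibiting $j$ as a retract of $i$ converts any lifting problem for $j$ into one for $i$ whose bottom edge is a precomposition of $v$, and a lift supplied by the hypothesis on $i$ is pushed back across the retract to yield the required lift for $j$.

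The main obstacle is conceptual rather than computational: one must verify in each case that the newly formed lifting problems still have bottom edges subordinate to $\mathcal U$, and for this the precomposition-stability clause built into the definition does all the work. Once this stability is recorded, the argument reduces to the standard verification that a class of cofibrations defined by a left lifting property is saturated.
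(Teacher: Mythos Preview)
Your proof is correct and follows the same strategy as the paper: verify each saturation axiom directly, using at every step the observation that subordination to $\mathcal U$ is stable under precomposition. The only difference is organisational. The paper works with the Goerss--Jardine five-axiom formulation of saturation (isomorphisms, pushouts, retracts, countable composition, arbitrary coproducts) and checks each separately, whereas you use the three-axiom package (pushouts, transfinite composition, retracts) common in model-category literature. Your package implies theirs: countable composition is a special case of transfinite composition, arbitrary coproducts can be built as a transfinite composition of pushouts along the summand inclusions, and the isomorphism case is trivial. Since the paper's reference for ``saturated'' is Goerss--Jardine, you might add one sentence noting this reconciliation (or simply record the isomorphism and coproduct cases explicitly), but no substantive idea is missing.
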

\begin{proof} Trivally all isomorphisms has the $\mathcal U$-subordinate left lifting property. Now consider the commutative diagram
$$\xymatrix{ A_*\ar[r]^{c}\ar[d]_{i} & C_*\ar[r]^{f}\ar[d]^{i_C} & X_*\ar[d]^{p} \\
B_*\ar[r]^{c_B} \ar@{.>}[rru]^{\!\!\!h}& B_*\cup_{A_*}C_* \ar[r]^{g}\ar@{.>}[ru]_{h_C} & Y _* }$$
in $SSets$ where the cofibration $i:A\to B$ has the $\mathcal U$-subordinate left lifting property with respect to $p:X_*\to Y_*$, the left square is a push-out diagram, and the map $g$ is subordinate to $\mathcal U$. Then $g\circ c_B$ is also subordinate to $\mathcal U$, so there is a map $h:B_*\to X_*$ which makes the diagram commutative. Let $h_C:B_*\cup_{A_*}C_*\to X_*$ be the push-out of $h$. By the universal property of push-outs this map makes the diagram commutative. We get that $i_C:C_*\to B_*\cup_{A_*}C_*$ has the
$\mathcal U$-subordinate left lifting property with respect to $p:X_*\to Y_*$.  Next consider the commutative diagram
$$\xymatrix{ A'_*\ar[r]^{a_0}\ar[d]_{i'}\ar@/^1pc/[rr]^{\textrm{id}_{A'_*}} & A_*\ar[r]^{a_1}\ar[d]_{i} & A'_*\ar[r]^{f}\ar[d]^{i'} & X_*\ar[d]^{p} \\
B'_*\ar[r]^{b_0} \ar@{.>}[rrru]^{h'}\ar@/_1pc/[rr]_{\textrm{id}_{B'_*}}& B_*\ar[r]^{b_1} \ar@{.>}[rru]^{\!\!\!h}& B'_* \ar[r]^{g} & Y _* }$$
in $SSets$ where the cofibration $i:A\to B$ has the $\mathcal U$-subordinate left lifting property with respect to $p:X_*\to Y_*$, and the map $g$ is subordinate to $\mathcal U$. Then $g\circ b_1$ is also subordinate to $\mathcal U$, so there is a map $h:B_*\to X_*$ which makes the diagram commutative. Let $h':B'_*
\to X_*$ be the composition $h\circ b_0$. This map will make the diagram above commutative, and therefore $i':A_*\to B_*$ also has the
$\mathcal U$-subordinate left lifting property with respect to $p:X_*\to Y_*$. Now let
$$
\xymatrix{ A^1_*\ar[r]^{i^1} & A^2_*\ar[r]^{i^2} & A^3_*\ar[r]^{i^3} &\cdots }
$$
be a diagram in $SSets$ such that all maps $i^k$ are cofibrations and have the $\mathcal U$-subordinate left lifting property with respect to $p:X_*\to Y_*$. For every positive integer $k$ let $i^{\infty}_k:A^k_*\to\varinjlim A^i_*$ be the limiting map and assume that we are given a diagram
\begin{equation}\label{3.2.1}
\xymatrix{ A^1_*\ar[r]^{f}\ar[d]_{i^{\infty}_1} & X_*\ar[d]^{p} \\
\varinjlim A^i_*\ar[r]^{g} \ar@{.>}[ru]^{h}& Y _*}
\end{equation}
in $SSets$ such that the map $g$ is subordinate to $\mathcal U$. We are going to construct a sequence of simplicial maps $h^k:A^k_*\to X_*$ for $k=1,2,3,\ldots$ such that $p\circ h^k=g\circ i^{\infty}_k$ and $h^k=i^k\circ h^{k+1}$ as follows. Set $h^1=f$. Clearly $p\circ h^1=g\circ i^{\infty}_1$. Assume now that the first $k$ maps $h^1,\ldots,h^k$ have been already constructed and consider the diagram:
$$\xymatrix{ A^k_*\ar[r]^{h^k}\ar[d]_{i^k} & X_*\ar[d]^{p} \\
A^{k+1}_*\ar[r]^{\ \ g\circ i^{\infty}_{k+1}} \ar@{.>}[ru]^{h^{k+1}}& Y _*.}$$
Because $p\circ h^k=g\circ i^{\infty}_k=g\circ i^{\infty}_{k+1}\circ i^k$ by the induction hypothesis, this diagram is commutative. Since $g\circ i^{\infty}_{k+1}:A^{k+1}_*\to Y _*$ is subordinate to $\mathcal U$ and $i^k:A^k\to A^{k+1}$ have the $\mathcal U$-subordinate left lifting property with respect to $p:X_*\to Y_*$, we get that there is a simplicial map $h^{k+1}:A^{k+1}_*\to X _*$ which makes the diagram commutative, so in particular $p\circ h^{k+1}=g\circ i^{\infty}_{k+1}$ and $h^k=i^k\circ h^{k+1}$. Because of the latter compatibility property there is a limit morphism $h=\varinjlim h^k$ which makes the diagram (\ref{3.2.1}) above commutative. So $i^{\infty}_1:A^1_*\to\varinjlim A^i_*$ have the $\mathcal U$-subordinate left lifting property with respect to $p:X_*\to Y_*$. 

Finally let $J$ be a index set, and for every $j\in J$ let $i^j:A^j_*\to B^j_*$ be a cofibration having the $\mathcal U$-subordinate left lifting property with respect to $p:X_*\to Y_*$. Assume that we are given a commutative diagram
\begin{equation}\label{3.2.2}
\xymatrix{\coprod_{j\in J}A^j_*\ar[r]^{f}\ar[d]_{\coprod_{j\in J}i^j} & X_*\ar[d]^{p} \\
\coprod_{j\in J}B^j_*\ar[r]^{g} \ar@{.>}[ru]^{h}& Y _*}
\end{equation}
in $SSets$ such that the map $g$ is subordinate to $\mathcal U$. By the universal property of coproducts there are simlicial maps $f^j:A^j_*\to X_*$ and $f^j:B^j_*\to Y_*$ for every $j\in J$ such that
$$f=\coprod_{j\in}f^j\textrm{ and }g=\coprod_{j\in}g^j,$$
and the diagram
\begin{equation}\label{3.2.3}
\xymatrix{A^j_*\ar[r]^{f^j}\ar[d]_{i^j} & X_*\ar[d]^{p} \\
B^j_*\ar[r]^{g^j} \ar@{.>}[ru]^{h^j}& Y _*}
\end{equation}
in $SSets$ is commutative for every $j\in J$. Because $g^j$ is the composition of the inclusion map $B^j\to\coprod_{j\in J}B^j_*$ and $g$, it is subordinate to $\mathcal U$, therefore there is a simplicial map $h^j:B^j_*\to X _*$ which makes the diagram (\ref{3.2.3}) commutative. Then the coproduct $h=\coprod_{j\in}h^j$ makes the diagram (\ref{3.2.2}) commutative, so $\coprod_{j\in J}i^j$ has the $\mathcal U$-subordinate left lifting property with respect to $p:X_*\to Y_*$, which concludes the proof of the lemma.
\end{proof}
Note that for every $U_*\in\mathcal U$ the pre-image $p^{-1}(U_*)\subseteq X_*$ is a simplicial subset of $X_*$.
\begin{cor}\label{3.3} Assume in addition that for every $U_*\in\mathcal U$ the restriction
$$p|_{p^{-1}(U_*)}:p^{-1}(U_*)\to U_*$$
is fibrant. Then the class of cofibrations $i:A_*\to B_*$ which has the $\mathcal U$-subordinate left lifting property with respect to $p:X_*\to Y_*$ contains all trivial cofibrations. 
\end{cor}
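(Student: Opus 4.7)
The plan is to combine the saturation property established in Lemma \ref{3.2} with a standard generation result from simplicial homotopy theory. Recall that the class of anodyne extensions---equivalently, the class of trivial cofibrations of simplicial sets---is the smallest saturated class containing the horn inclusions $\Lambda^n_k \hookrightarrow \Delta^n$ for $n \geq 1$ and $0 \leq k \leq n$ (see \cite{GJ}). Granted Lemma \ref{3.2}, it therefore suffices to verify that every such horn inclusion has the $\mathcal U$-subordinate left lifting property with respect to $p: X_* \to Y_*$; the conclusion of the corollary will then follow.

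To do that, I would consider a commutative square
$$\xymatrix{ \Lambda^n_k \ar[r]^{f} \ar[d] & X_* \ar[d]^{p} \\ \Delta^n \ar[r]^{g} & Y_* }$$
with $g$ subordinate to $\mathcal U$. The crucial observation---and really the only point in the argument needing any thought---is that when the source is $\Delta^n$, subordinacy (which a priori only demands that each individual simplex land in \emph{some} element of $\mathcal U$) actually forces $g$ to factor through a single $U_*\in\mathcal U$. Indeed, applying the defining property to the top nondegenerate simplex $\iota_n=\mathrm{id}_{\underline n}\in\Delta^n_n$ produces some $U_*\in\mathcal U$ with $g(\iota_n)\in U_n$; since $U_*$ is a simplicial subset of $Y_*$ it is closed under faces and degeneracies, and since every simplex of $\Delta^n$ is obtained from $\iota_n$ by iterated face and degeneracy operators, we deduce $g(\Delta^n)\subseteq U_*$. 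In particular $f$ takes values in $p^{-1}(U_*)$.

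We are thereby reduced to the lifting problem
$$\xymatrix{ \Lambda^n_k \ar[r]^{f} \ar[d] & p^{-1}(U_*) \ar[d] \\ \Delta^n \ar[r] \ar@{.>}[ru]^{h} & U_* }$$
which admits a solution by the standing hypothesis that $p|_{p^{-1}(U_*)}:p^{-1}(U_*)\to U_*$ is a Kan fibration. Composing $h$ with the inclusion $p^{-1}(U_*)\hookrightarrow X_*$ produces the lift filling the original square, so the horn inclusions all belong to the class in question, and Lemma \ref{3.2} then promotes this fact to the entire saturated class of trivial cofibrations. I do not expect any serious obstacle beyond the factorisation observation above; the rest is formal manipulation of the lifting property together with the explicit generation of the anodyne class by horns.
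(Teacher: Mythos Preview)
Your proposal is correct and follows essentially the same approach as the paper: both arguments check the horn inclusions directly by observing that the image of the top non-degenerate simplex of $\Delta^n$ under $g$ lies in some $U_*$, which forces all of $g(\Delta^n)\subseteq U_*$ and reduces the lifting problem to the fibrancy hypothesis on $p|_{p^{-1}(U_*)}$, and then invoke Lemma \ref{3.2} together with the generation of trivial cofibrations by horn inclusions. Your write-up is in fact slightly more explicit about why every simplex of $\Delta^n$ must land in $U_*$, but the strategy is identical.
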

\begin{proof} Let $\Lambda^n_k\subset\Delta^n$ be the $k$-th horn in the standard $n$-dimensional simplex. Now consider the commutative diagram:
$$\xymatrix{ \Lambda^n_k\ar[r]^{f}\ar[d]_{i^n_k} & X_*\ar[d]^{p} \\
\Delta^n\ar[r]^{g} \ar@{.>}[ru]^{h}& Y _*}$$
where $i^n_k:\Lambda^n_k\to\Delta^n$ is the inclusion map and $g$ is subordinate to $\mathcal U$. This means that the unique non-degenerate simplex of $\Delta^n$ (corresponding to the identity map of $\underline n$) maps into $U_*$ for some $U_*\in\mathcal U$ with respect to $g$. This implies that $g$ maps $\Delta^n$ into $U_*$, and hence $f$ maps
$\Lambda^n_k$ into $p^{-1}(U_*)$. Since $p|_{p^{-1}(U_*)}:p^{-1}(U_*)\to U_*$ is fibrant, there is a morphism $h:\Delta^n\to X_*$, mapping into
$p^{-1}(U_*)$, which makes the diagram above commutative. We get that $i^n_k$ has the $\mathcal U$-subordinate left lifting property with respect to $p:X_*\to Y_*$. Since by the theory of anodyne extensions (see pages 14-19 and 62-63 in \cite{GJ}) the saturation of the inclusion maps $\Lambda^n_k\subset\Delta^n$ contains all trivial cofibrations, the claim follows from Lemma \ref{3.2}.
\end{proof}
\begin{defn} Let $(M,\mathfrak{S}(M),\mathcal O_M)$ be an $lsa$-space, and let $U\subseteq M$ be open. Let $\mathfrak{S}(U)=\{V\in \mathfrak{S}(M)\mid V\subseteq U\}$ and let $\mathcal O_U$ be the restriction of $\mathcal O_M$ onto $U$. Then the triple $(U,\mathfrak{S}(U),\mathcal O_U)$ is an $lsa$-space, too, and the inclusion map $U\to M$ is a morphism in $\mathcal{LSA}_K$. Therefore for every map $f:X\to Y$ of $lsa$-spaces and for every open $U\subseteq Y$ the map $f:f^{-1}(U)\to U$ is a  morphism in
$\mathcal{LSA}_K$. A map $f:X\to Y$ of $lsa$-spaces is {\it locally trivial} if there is admissible cover $\{U_i\mid i\in I\}$ of $Y$ and for every $i\in I$ an $lsa$-space $X_i$ and an isomorphism $\phi_i:U_i\times X_i\to f^{-1}(U_i)$ such that $f\circ\phi_i:U_i\times X_i\to U_i$ is the projection onto the first coordinate. 
\end{defn}
\begin{thm}\label{fibration_theorem1} Let $f:X\to Y$ be a locally trivial map of $lsa$-spaces. Then the map $S_*(f):S_*(X)\to S_*(Y)$ induced by $f$ is a fibration. 
\end{thm}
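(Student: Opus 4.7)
The plan is to reduce horn-filling for $S_*(f)$ to the lifting machinery established in Corollary \ref{3.3}, using a semi-algebraic triangulation of $\Delta^n_K$ finely adapted to the given local trivialisation of $f$.

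First I would fix an admissible trivialising cover $\{U_i\}_{i\in I}$ of $Y$ provided by local triviality, and set $\mathcal U=\{S_*(U_i)\mid i\in I\}$. Because $S_*$ preserves products, the restriction of $S_*(f)$ over each $S_*(U_i)$ is identified with the projection $S_*(U_i)\times S_*(X_i)\to S_*(U_i)$; since $S_*(X_i)$ is fibrant by Lemma \ref{2.6}, this projection is a Kan fibration. Corollary \ref{3.3} then guarantees that every trivial cofibration has the $\mathcal U$-subordinate left lifting property with respect to $S_*(f)$.

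Given an arbitrary horn-filling problem $\alpha:\Lambda^n_k\to S_*(X)$ and $\beta:\Delta^n\to S_*(Y)$ with $S_*(f)\circ\alpha=\beta|_{\Lambda^n_k}$, Proposition \ref{3.10} translates it to a semi-algebraic lifting problem: find $\bar\gamma:\Delta^n_K\to X$ extending $\bar\alpha:\Lambda^n_{k,K}\to X$ and lifting $\bar\beta:\Delta^n_K\to Y$. The semi-algebraic sets $\bar\beta^{-1}(U_i)$ form an admissible open cover of $\Delta^n_K$, which may be refined to a finite subcover by compactness. Invoking the semi-algebraic triangulation theorem of Delfs--Knebusch, I produce a finite closed simplicial complex $T=(T_v,T_s)$ and a semi-algebraic isomorphism $\psi:|T|_K\xrightarrow{\sim}\Delta^n_K$ such that $\psi^{-1}(\Lambda^n_{k,K})=|T'|_K$ for a subcomplex $T'\subseteq T$, and such that for every $C\in T_s$ there exists an $i(C)\in I$ with $\bar\beta(\psi([C]))\subseteq U_{i(C)}$.

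Transporting via Proposition \ref{3.10}, the composites $\bar\alpha\circ(\psi|_{|T'|_K})$ and $\bar\beta\circ\psi$ yield simplicial maps $\alpha':T'_*\to S_*(X)$ and $\beta':T_*\to S_*(Y)$ compatible under $S_*(f)$; the simplex-by-simplex condition above is precisely what makes $\beta'$ subordinate to $\mathcal U$. The inclusion $T'_*\hookrightarrow T_*$ is a cofibration, and by Proposition \ref{3.8} it is a weak equivalence since its semi-algebraic realisation is the inclusion $\Lambda^n_{k,K}\hookrightarrow\Delta^n_K$ between two contractible $lsa$-spaces (Remark \ref{2.13a}). Corollary \ref{3.3} therefore produces a simplicial lift $h:T_*\to S_*(X)$ with $h|_{T'_*}=\alpha'$ and $S_*(f)\circ h=\beta'$; a final application of Proposition \ref{3.10} and composition with $\psi^{-1}$ converts $h$ into the desired semi-algebraic lift $\Delta^n_K\to X$, whose associated simplicial map fills the original horn.

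The principal obstacle is the triangulation step: one needs a semi-algebraic triangulation of $\Delta^n_K$ over an arbitrary real closed field $K$ that is simultaneously adapted to the closed subspace $\Lambda^n_{k,K}$ and to the finite family of open semi-algebraic sets $\{\bar\beta^{-1}(U_i)\}$. Everything else is formal: the rest is just a translation between semi-algebraic and simplicial data through Proposition \ref{3.10}, together with the abstract lifting property of Corollary \ref{3.3}. The needed triangulation theorem is precisely the one developed by Delfs--Knebusch, and providing it is the only geometric input of the argument.
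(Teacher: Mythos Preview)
Your proposal is correct and follows essentially the same route as the paper's proof: the paper also reduces to the $\mathcal U$-subordinate lifting of Corollary~\ref{3.3}, verifies via Lemma~\ref{2.6} that the restrictions over each $S_*(U_i)$ are fibrant, translates the horn-filling problem through Proposition~\ref{3.10}, and then uses Proposition~\ref{3.8} together with Remark~\ref{2.13a} to conclude that the inclusion of the subcomplex into the triangulation is a trivial cofibration. The only difference is bibliographic: the triangulation step you attribute to Delfs--Knebusch (adapted simultaneously to $\Lambda^n_{k,K}$ and to the open cover $\{\bar\beta^{-1}(U_i)\}$) is isolated in the paper as Lemma~\ref{woerheide} and credited to Woerheide's thesis, since the standard Delfs--Knebusch statement is phrased for closed rather than open constraints and needs a small additional argument.
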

\begin{proof} Let $\{U_i\mid i\in I\}$ be an admissible cover of $Y$ such that for every $i$ there is an $lsa$-space $X_i$ and an isomorphism $\phi_i:U_i\times X_i\to f^{-1}(U_i)$ such that $f\circ\phi:U_i\times X_i\to f^{-1}(U_i)$ is the projection onto the second coordinate. For every $i\in I$ the inclusion map $U_i\to Y$ induces an inclusion $S_*(U_i)\to S_*(Y)$ of simplicial sets. Let $\mathcal U$ be the collection of these simplicial subsets of $S_*(Y)$.
\begin{lemma}\label{3.13} The class of cofibrations $i:A_*\to B_*$ which has the
$\mathcal U$-subordinate left lifting property with respect to $S_*(f):S_*(X)\to S_*(Y)$ contains all trivial cofibrations. 
\end{lemma}
\begin{proof} By Corollary \ref{3.3} we have to verify that for every $i\in I$ the simplicial map:
$$f_*|_{f_*^{-1}(S_*(U_i))}:f_*^{-1}(S_*(U_i))\to S_*(U_i)$$
is fibrant. Since $f_*^{-1}(S_*(U_i))=S_*(f^{-1}(U_i))$, there is a commutative diagram:
$$\xymatrix{ S_*(U_i)\times S_*(X_i) \ar[r]^{\cong}\ar[rd]^{\pi} &S_*(U_i\times X_i)\ar[r]^{S_*(\phi_i)}\ar[d] & S_*(f^{-1}(U_i))
\ar[ld]_{S_*(f)} \\
& S_*(U_i) & }$$
where the left horizontal map exists because the functor of semi-algebraic singular simplexes commutes with direct products, the map $\pi$ is the projection onto the first factor, and the middle vertical arrow is induced by the projection $U_i\times X_i\to U_i$. Since $\phi_i$ is an isomorphism, the map $S_*(\phi_i)$ is also an isomorphism, so it will be enough to show that $\pi$ is fibrant. But this is true, since $S_*(X_i)$ is fibrant by Lemma \ref{2.6}.
\end{proof}
\begin{defn} By a pair of closed simplicial complexes $(A,B)$ we mean two closed simplicial complexes $A=(A_v,A_s)$ and $B=(B_v,B_s)$ such that $B$ is a subcomplex of $A$, that is, we have $B_v\subseteq A_v$ and $B_s\subseteq A_s$. They form a category $\mathcal{SC}^p$ where a morphism of pairs of closed simplicial complexes $(A,B)\to(A',B')$ is a map of closed simplicial sets $\phi:A_v\to A'_v$ such that the restriction of $\phi$ onto $B_v$ is a map $B_v\to B_v'$ of closed simplicial sets. Note that when $(A,B)$ is a pair of closed simplicial complexes then
$|B|_K$ is closed semi-algebraic subset of $|A|_K$, and consequently the rule $(A,B)\mapsto(|A|_K,|B|_K)$ furnishes a functor $\mathcal{SC}^p\to\mathcal{LSA}^p_K$.
\end{defn}
\begin{lemma}[Woerheide's trick]\label{woerheide} Let $\{V_j\mid j\in J\}$ be an admissible cover of $\Delta^n_K$. Then there is a pair of finite closed simplicial complexes $(A,B)$ and an isomorphism $\tau:(|A|_K,|B|_K)\to (\Delta^n_K,\Lambda^n_{k,K})$ in
$\mathcal{LSA}^p_K$ such that the realisation of every simplex of $A$ lies in a set $\tau^{-1}(V_j)$ for some $j\in J$.
\end{lemma}
\begin{proof} This is explained in the second paragraph of the sixth section of Woerheide's thesis \cite{Wo}, on page 36. \end{proof}
Let us start the final part of the proof of Theorem \ref{fibration_theorem1} now! By definition we have to check that for every diagram:
$$\xymatrix{ \Lambda^n_{k,K}\ar[r]^{f}\ar[d]_{i^n_k} & X\ar[d]^{f} \\
\Delta^n_K\ar[r]^{g} \ar@{.>}[ru]^{h}& Y}$$
in $\mathcal{LSA}_K$ there is a morphism $h:\Delta^n_K\to X$ of $lsa$-spaces which makes the diagram commutative. By Lemma \ref{woerheide} there is a  pair of finite closed simplicial complexes $(A,B)$ and an isomorphism $\tau:(|A|_K,|B|_K)\to (\Delta^n_K,\Lambda^n_{k,K})$ in $\mathcal{LSA}^p_K$ such that the realisation of every simplex of $A$ lies in a set $(g\circ\iota)^{-1}(U_i)$ for some $i\in I$. By Remark \ref{2.13a} the simplicial sets $S_*(\Delta^n_K)$ and $S_*(\Lambda^n_{k,K})$ are contractible, therefore the map $S_*(\Delta^n_K)\to S_*(\Lambda^n_{k,K})$ induced by the inclusion $\Lambda^n_{k,K}\subset\Delta^n_K$ is a weak equivalence. Since the pair $(\Delta^n_K,\Lambda^n_{k,K})$ is isomorphic to the pair $(|A|_K,|B|_K)$ via
$\tau$, the map $S_*(|B|_K)\to S_*(|A|_K)$ induced by the inclusion $|B|_K\subset|A|_K$ is also a weak equivalence. Using Proposition \ref{3.8} we get that the inclusion $B\subset A$ is a weak equivalence, too. Therefore the map $\epsilon:B_*\to A_*$ induced by the inclusion $B\subset A$ is a trivial cofibration. We have a commutative diagram:
$$\xymatrix{ B_*\ar[rr]^{S_*(f\circ\tau)\circ\iota_B}\ar[d]_{\epsilon} & &S_*(X)\ar[d]^{S_*(f)} \\
A_*\ar[rr]^{S_*(g\circ\tau)\circ\iota_A} \ar@{.>}[rru]^{\widetilde h}& & S_*(Y).}$$
By Proposition \ref{3.10} it will be enough to show that there is a map $\widetilde h:A_*\to S_*(X)$ of simplicial sets which makes the diagram commutative. However $g:A_*\to S_*(Y)$ is subordinate to $\mathcal U$ so the latter follows from Lemma \ref{3.13}.
\end{proof}

\section{Coverings and the fundamental group}

The following are all taken from \cite{DeKn1}. 
\begin{defn} A locally semialgebraic map $f:M\to N$ over $K$ is called a local isomorphism if every point $x\in M$ has an open locally semi-algebraic neighborhood $U$ such that the restriction $f|_U:U\to f(U)$ is a locally semialgebraic isomorphism from $U$ onto an open locally semialgebraic subset of $N$.
\end{defn}
\begin{thm} Let $f:M\to N$ be a local isomorphism between locally semialgebraic spaces $M, N$ over $K$. Then $M$ has an admissible covering $\{M_{\alpha}\mid\alpha\in I\}$ such that $M_{\alpha}$ is mapped isomorphically onto an open locally semialgebraic subset of $N$ by $f$ for every
$\alpha\in I$.
\end{thm}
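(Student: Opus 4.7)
The plan is to start from the pointwise neighbourhoods provided by the local isomorphism hypothesis and refine them in two steps: first down to semi-algebraic open sets, then to a genuinely admissible family, using the quasi-compactness of semi-algebraic sets in their Grothendieck topology.

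First I would unpack the definition: for every $x\in M$ the hypothesis supplies an open locally semi-algebraic neighbourhood $U_x$ of $x$ together with an isomorphism $f|_{U_x}:U_x\to f(U_x)$ onto an open locally semi-algebraic subset of $N$. The family $\{U_x\}_{x\in M}$ covers $M$ set-theoretically but need not be admissible. To remedy this, I would fix, using that $M$ is an $lsa$-space, an admissible covering $\{M_\beta\}_{\beta\in B}$ of $M$ by $lsa$-spaces associated to semi-algebraic sets; then for each $x$ I would pick some $\beta(x)$ with $x\in M_{\beta(x)}$ and, working inside the semi-algebraic space $M_{\beta(x)}$, shrink $U_x$ to a semi-algebraic open neighbourhood $V_x\subseteq U_x\cap M_{\beta(x)}$ of $x$ (this is possible because semi-algebraic open sets form a basis for the strong topology of a semi-algebraic space). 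Since $f|_{U_x}$ is an isomorphism of $lsa$-spaces onto its image, the restriction $f|_{V_x}:V_x\to f(V_x)$ is also such an isomorphism, and $f(V_x)$ is a semi-algebraic open subset of $N$, being open in the open set $f(U_x)$ and the image of a semi-algebraic set under a semi-algebraic isomorphism.

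The second step is the extraction of an admissible subcovering. For each $\beta\in B$ the family $\{V_x\cap M_\beta \mid x\in M,\ V_x\cap M_\beta\neq\emptyset\}$ is an open covering of the semi-algebraic space $M_\beta$ by semi-algebraic open subsets. Invoking the quasi-compactness of semi-algebraic sets in their Grothendieck topology (a standard fact from \cite{DeKn1}), this covering can be refined to a finite one indexed by a finite set $\{x^\beta_1,\dots,x^\beta_{n_\beta}\}$. Taking the union over $\beta$ of these finite refinements yields a family $\{V_{x^\beta_i}\cap M_\beta\}_{\beta\in B,\,1\le i\le n_\beta}$; its admissibility follows because $\{M_\beta\}$ was admissible and each $M_\beta$ is now covered by a finite subfamily of semi-algebraic opens. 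By construction every member of this family is mapped isomorphically by $f$ onto an open locally semi-algebraic subset of $N$, giving the desired covering.

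The main obstacle is the passage from pointwise neighbourhoods to an admissible family; concretely this rests on the quasi-compactness of the Grothendieck topology on a semi-algebraic space, which is the substantive point borrowed from the Delfs--Knebusch framework. The shrinking step is routine once $M_\beta$ is identified with an affine semi-algebraic space, since one can take a basic semi-algebraic open neighbourhood of $x$ inside $U_x\cap M_\beta$, and $f$ restricted to such a neighbourhood remains an isomorphism of $lsa$-spaces onto its image.
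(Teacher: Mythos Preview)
The paper does not supply its own proof here: the section opens with ``The following are all taken from \cite{DeKn1}'' and this theorem is simply quoted from Delfs--Knebusch without argument, so there is nothing in the paper to compare your proposal against.

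Your argument is correct and is essentially the standard one. The two substantive inputs you identify are exactly right: that inside a semi-algebraic chart one can shrink any open locally semi-algebraic neighbourhood to an open semi-algebraic one (so that $f$ restricted to it remains an isomorphism onto an open semi-algebraic subset of $N$), and that every set-theoretic covering of a semi-algebraic set by open semi-algebraic subsets admits a finite subcovering. The assembly of the finite refinements on each $M_\beta$ into an admissible covering of $M$ is then routine from the definition of the Grothendieck topology on an $lsa$-space. Note also that the intersections $V_x\cap M_\beta$ for $\beta\neq\beta(x)$ cause no difficulty: since $\mathfrak S(M)$ is closed under finite intersections and restricts on each chart $M_\beta$ to the semi-algebraic topology, $V_x\cap M_\beta$ is automatically open semi-algebraic in $M_\beta$.
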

\begin{defn} A locally semialgebraic map $p:M\to N$ is called a locally semialgebraic covering of $N$ (or "covering" in short), if $p$ is surjective, locally trivial, and has discrete fibres. The map $p$ is called connected if $M$, and hence also $N$, is connected.
\end{defn}
\begin{thm} Let $p:M\to N$ be a connected locally semialgebraic covering, and let $x_0\in M,\ y_0=p(x_0)$.
\begin{enumerate}
\item[$(i)$] The induced homomorphism $p_*:\pi_1(M,x_0)\to
\pi_1(N,y_0)$ is infective. If $p$ is finite, the index of
$\pi_1(M,x_0)$ in $\pi_1(N,y_0)$ is equal to the degree of
$p$.
\item[$(ii)$] If $f:(L,z_0)\to(N,y_0)$ is a locally semialgebraic map with $L$ connected then $f$ can be lifted to a locally semialgebraic map $g:(L,z_0)\to(M,x_0)$ with $p\circ g=f$ if and only if $f_*(\pi_1(L,z_0))\subseteq p_*(\pi_1(M,x_0))$. The map $g$ is uniquely determined provided it exists. If $f$ is a covering, then $g$ is also a covering.
\end{enumerate}
\end{thm}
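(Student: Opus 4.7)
The plan is to transfer the topological proof into the simplicial setting using Theorem \ref{fibration_theorem1}. Since $p$ is locally trivial by hypothesis, that theorem makes $S_*(p):S_*(M)\to S_*(N)$ into a Kan fibration. Its fibre over $s_{y_0}$ is $S_*(p^{-1}(y_0))$, and because $p$ has discrete fibres the lsa-space $p^{-1}(y_0)$ is a disjoint union of copies of $\Delta^0_K$, so this simplicial fibre is a discrete simplicial set; its higher pointed homotopy groups all vanish and its $\pi_0$ is in canonical bijection with the fibre itself.

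For part $(i)$, I would read off the long exact sequence in homotopy of this Kan fibration. In low degree the relevant portion is
$$\pi_1(S_*(p^{-1}(y_0)),s_{x_0})\to\pi_1(S_*(M),s_{x_0})\xrightarrow{\pi_1(S_*(p))}\pi_1(S_*(N),s_{y_0})\to\pi_0(S_*(p^{-1}(y_0)))\to\pi_0(S_*(M)).$$
The leftmost group vanishes by the previous paragraph; translating through the natural isomorphisms of Lemma \ref{2.14} yields the injectivity of $p_*:\pi_1(M,x_0)\to\pi_1(N,y_0)$. If $p$ is finite of degree $d$, the fibre has exactly $d$ elements and $\pi_0(S_*(M))$ is a singleton because $M$ is connected, so exactness sets up a bijection between $\pi_1(N,y_0)/p_*\pi_1(M,x_0)$ and $\pi_0(S_*(p^{-1}(y_0)))$, whose cardinality is $d$.

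For part $(ii)$, necessity is functoriality, and for sufficiency one runs the classical monodromy argument. Given $w\in L$, pick a semi-algebraic path $\gamma:I_K\to L$ from $z_0$ to $w$; lift $f\circ\gamma$ to a semi-algebraic path in $M$ starting at $x_0$ (the Kan fibration property of $S_*(p)$ provides a lift of the corresponding $1$-simplex), and set $g(w)$ to be the endpoint of this lift. The hypothesis $f_*\pi_1(L,z_0)\subseteq p_*\pi_1(M,x_0)$ is exactly what is needed to guarantee that $g(w)$ is independent of the choice of $\gamma$, via the standard comparison of two candidate paths through their concatenation-with-reverse, and uniqueness of $g$ is immediate from uniqueness of path lifts. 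The main obstacle, requiring genuinely semi-algebraic input rather than purely simplicial information, is to show that this pointwise recipe assembles into a morphism in $\mathcal{LSA}_K$: for that one fixes a trivialising admissible cover $\{U_i\mid i\in I\}$ of $N$, pulls back to the admissible cover $\{f^{-1}(U_i)\}$ of $L$, and observes that on each $f^{-1}(U_i)$ the lift $g$ coincides with the composition of $f|_{f^{-1}(U_i)}$ with a single local section of $p$ over $U_i$ (the sheet singled out by continuity of the path-lift), which is semi-algebraic. The last assertion, that $g$ is a covering whenever $f$ is, then follows from the factorisation $f=p\circ g$ combined with the local triviality of $f$ and $p$ and the uniqueness just established.
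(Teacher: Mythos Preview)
The paper does not prove this theorem at all: the entire section opens with ``The following are all taken from \cite{DeKn1}'' and simply quotes the result from Delfs--Knebusch. So there is no argument in the paper to compare against; your proposal is an independent proof attempt.

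Your treatment of part $(i)$ is correct and clean. Theorem~\ref{fibration_theorem1} applies because a covering is locally trivial by definition, the fibre $S_*(p^{-1}(y_0))$ is indeed a discrete simplicial set, and the long exact sequence of the Kan fibration, translated through Lemma~\ref{2.14}, gives both injectivity and the index computation exactly as you describe.

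Part $(ii)$ has a genuine gap in the step where you argue that the pointwise lift $g$ is an $lsa$-morphism. You assert that on each $f^{-1}(U_i)$ the map $g$ equals $f$ followed by \emph{a single} local section of $p$ over $U_i$. This is false in general: the open set $f^{-1}(U_i)$ need not be connected, and on distinct path-components the monodromy construction may select different sheets of $p$ over $U_i$. What you actually need is to refine to an admissible cover of $L$ by \emph{connected} open $lsa$-subsets, each mapping into some $U_i$; on such a piece your argument goes through. That such a refinement exists and is still admissible is exactly the kind of point-set input that Delfs--Knebusch supply in \cite{DeKn1}, and it is not automatic from what is set up in this paper. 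A related loose end is your tacit use of the fact that a connected $lsa$-space is semi-algebraically path-connected, which you need in order to choose $\gamma$; this is true but again rests on the Delfs--Knebusch theory rather than on anything proved here. Finally, the last sentence (``$g$ is a covering whenever $f$ is'') is only gestured at; local triviality of $g$ does not follow formally from local triviality of $f$ and $p$ without a further local argument.
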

In particular the theorem above says that every connected covering $p:M\to N$ of $N$ is determined up to isomorphism by the subgroup $p_*(\pi_1(M,x_0))$ of $\pi_1(N,y_0)$. Conversely we can prove the next theorem.
\begin{thm} Let $N$ be a connected locally semialgebraic space, $y_0\in N$ and $H$ be a subgroup of $\pi_1(N,y_0)$. Then there exists a connected locally semialgebraic covering $p:M\to N$ with $p_*(\pi_1(M,x_0))=H$ for some $x_0\in M$ with $p(x_0)=y_0$.
\end{thm}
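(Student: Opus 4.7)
The plan is to imitate the classical topological construction of the covering space associated to a subgroup of the fundamental group, adapted to the semi-algebraic setting. I would first construct a candidate for the universal cover $\widetilde N$ as the set of equivalence classes of pairs $(y,\gamma)$, where $y\in N$ and $\gamma:I_K\to N$ is a semi-algebraic path with $\gamma(0)=y_0$ and $\gamma(1)=y$, modulo semi-algebraic homotopy rel endpoints. The projection $\widetilde p:\widetilde N\to N,\ [(y,\gamma)]\mapsto y$ carries a natural left action of $\pi_1(N,y_0)$ by concatenation with loops. I would then set $M:=H\backslash\widetilde N$, let $p:M\to N$ be the induced map, and choose $x_0\in M$ to be the class of the constant path at $y_0$.

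Next, I would equip $\widetilde N$ (and therefore $M$) with a locally semi-algebraic structure. For this I would use the triangulation theorem for paracompact $lsa$-spaces (in \cite{DeKn1}) to obtain an admissible covering of $N$ by semi-algebraically contractible open semi-algebraic subsets $U$ (for instance, open stars of a semi-algebraic triangulation). Because such a $U$ is semi-algebraically simply connected, any path class from $y_0$ to a point of $U$ extends uniquely (up to semi-algebraic homotopy) to path classes from $y_0$ to every point of $U$, yielding a bijection $\widetilde p^{-1}(U)\cong U\times\pi_0(\widetilde p^{-1}(U))$. Declaring each sheet $U\times\{c\}$ to be an admissible open semi-algebraic subspace of $\widetilde N$ defines the $lsa$-structure; the key point is that, inside a contractible semi-algebraic $U$, path-lifting can be written down by first-order formulas in the ordered field language, so the transition maps between sheets are automatically semi-algebraic. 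The $\pi_1(N,y_0)$-action permutes sheets and descends to give $M$ its $lsa$-structure.

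Finally I would verify the three required properties. Surjectivity of $p$ follows because $N$ is connected and so every $y\in N$ can be joined to $y_0$ by a semi-algebraic path; local triviality and discreteness of fibres are built into the construction via the sheet decomposition. Connectedness of $M$ follows from semi-algebraic path-lifting: for any $(y,[\gamma])$, the path $t\mapsto[(\gamma(t),\gamma|_{[0,t]})]$ joins $x_0$ to $(y,[\gamma])$, and one checks this is semi-algebraic using the standard reparametrization $\Delta^1_K\cong I_K$. Lastly, by part $(ii)$ of the preceding theorem a loop in $N$ based at $y_0$ lies in $p_*(\pi_1(M,x_0))$ if and only if it lifts to a loop at $x_0$, and by construction this happens exactly when its homotopy class lies in $H$, giving $p_*(\pi_1(M,x_0))=H$.

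The main obstacle is the middle step: producing the $lsa$-structure on $\widetilde N$ so that it is genuinely a locally semi-algebraic space in the sense of Definition \ref{2.3}. One must check that the sheets over the chosen admissible cover glue into an admissible cover of $\widetilde N$ in the Grothendieck topology, and that the resulting ringed space satisfies the local model axiom. This reduces to the existence of a sufficiently fine admissible cover of $N$ by semi-algebraically contractible opens (supplied by triangulation and the Delfs--Knebusch paracompactness theory) together with the observation that, over a contractible piece, lifting paths is first-order definable and hence semi-algebraic.
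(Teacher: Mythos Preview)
The paper does not give its own proof of this theorem. The entire section on coverings and the fundamental group opens with ``The following are all taken from \cite{DeKn1},'' and this statement, like the others in that section, is quoted from Delfs--Knebusch without argument. So there is no in-paper proof to compare your proposal against.

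Your sketch is the classical path-space construction transported to the semi-algebraic category, and this is indeed essentially how Delfs and Knebusch proceed in \cite{DeKn1} and the book \cite{DeKn2}. One point to watch: you appeal to the triangulation theorem to obtain an admissible cover of $N$ by semi-algebraically contractible opens, but triangulation in the $lsa$ setting requires $N$ to be regular and paracompact, hypotheses not stated in the theorem as quoted here. In the Delfs--Knebusch development these are typically standing assumptions (or the covering theory is set up so as to sidestep global triangulation), so this is a matter of aligning your hypotheses with those in \cite{DeKn1} rather than a flaw in the strategy. Your identification of the $lsa$-structure on $\widetilde N$ as the crux is accurate; the verification that the sheets form an admissible covering and that path-lifting over a contractible semi-algebraic open is definable is exactly where the semi-algebraic content lies, and is carried out in detail in the Delfs--Knebusch references.
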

Applying this theorem to the trivial subgroup of $\pi_1(N,y_0)$ we derive the following corollary.
\begin{cor} Let N be a connected locally semialgebraic space and $y_0\in N$.
\begin{enumerate}
\item[$(i)$] Up to isomorphism there exists a uniquely determined connected locally semialgebraic covering $p:M\to N$, called the universal covering of $N$ with the following universal property. For every $x_0\in M$ with $p(x_0)=y_0$ and every connected covering $f:(M',z_0)\to (N,y_0)$ there exists a uniquely determined locally semialgebraic map
$g:(M,x_0)\to(M',z_0)$ such that the diagram
$$\xymatrix{ M\ar@{.>}[r]^{g}\ar[d]_{p}& M'\ar[dl]^f \\
N & }$$
communes. The map $g$ is again a covering.
\item[$(ii)$] A connected covering $p':M'\to N$ is universal and hence isomorphic to $p:M\to N$, if and only if $M'$ is simply connected.
\end{enumerate}
\end{cor}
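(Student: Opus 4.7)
The plan is to derive the corollary as a formal consequence of the two theorems that immediately precede it. For part $(i)$, I would first apply the existence theorem (Theorem classifying coverings by subgroups) to the trivial subgroup $\{1\} \subseteq \pi_1(N,y_0)$, obtaining a connected locally semialgebraic covering $p : M \to N$ together with a point $x_0 \in M$ above $y_0$ such that $p_*(\pi_1(M,x_0)) = \{1\}$. Injectivity of $p_*$, guaranteed by part $(i)$ of the lifting theorem, then forces $\pi_1(M,x_0) = \{1\}$, so $M$ is simply connected. This will also be the source of the characterisation in part $(ii)$.

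To establish the universal property, I would take an arbitrary connected covering $f : (M',z_0) \to (N,y_0)$ and apply part $(ii)$ of the lifting theorem to $p : (M,x_0) \to (N,y_0)$ with $(L,z_0) = (M,x_0)$. The containment hypothesis $p_*(\pi_1(M,x_0)) \subseteq f_*(\pi_1(M',z_0))$ is automatic because the left-hand side is trivial, so the theorem produces a unique locally semialgebraic map $g : (M,x_0) \to (M',z_0)$ with $f \circ g = p$, and the same theorem asserts that $g$ is itself a covering. Uniqueness of the universal covering up to isomorphism follows from the remark between the two preceding theorems, which says that a connected covering is determined up to isomorphism by the subgroup $p_*(\pi_1(M,x_0))$; alternatively, one composes the universal lifts in both directions and invokes the uniqueness clause of the lifting criterion together with the identity map as a competitor.

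For part $(ii)$, one direction is what has just been observed: the universal covering built above has simply connected total space. Conversely, if $p' : M' \to N$ is a connected covering with $M'$ simply connected and $z_0 \in M'$ satisfies $p'(z_0) = y_0$, then $p'_*(\pi_1(M',z_0)) = \{1\} = p_*(\pi_1(M,x_0))$, so by the subgroup classification $p'$ is isomorphic to $p$ over $N$, hence universal.

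There is essentially no substantive obstacle: the real technical work sits in the previous two theorems, which in turn rest on the Delfs--Knebusch machinery. The only points demanding care are the basepoint bookkeeping when invoking the lifting criterion and verifying that the canonical map produced by the universal property really is a covering, both of which are handed to us verbatim by part $(ii)$ of the lifting theorem.
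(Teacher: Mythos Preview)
Your proposal is correct and matches the paper's approach exactly: the paper simply says ``Applying this theorem to the trivial subgroup of $\pi_1(N,y_0)$ we derive the following corollary'' and gives no further argument, so you have faithfully spelled out the details the paper leaves implicit, using the two preceding theorems precisely as intended.
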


\section{The fibration theorem: geometric fibrations}\label{fibration_section2}

\begin{defn}\label{4.1} In this paper we will freely use the basic notions and results from the theory of Nash manifolds. Informally speaking the concept of Nash manifolds is the semi-algebraic analogue of differentiable manifolds in classical topology and it has a similar, highly developed theory. Our main reference is the book \cite{BCR}. Now we review some more specified definitions and results in this theory which will play an important role in this section. Let $K$ be a real closed field, as above. A Nash map $f:X\to Y$ of Nash manifolds over $K$ is {\it Nash trivial} if there is a Nash manifold $Z$ and a Nash diffeomorphism $\phi_i:Z\times Y\to X$ (over $K$) such that $f\circ\phi_i:U_i\times X_i\to U_i$ is the projection onto the first coordinate. 
\end{defn}
\begin{thm}\label{thom_isotropy1} Let $f:X\to K^n$ be a proper surjective submersion of Nash manifolds over $K$. Then $f$ is Nash trivial. 
\end{thm}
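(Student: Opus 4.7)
The plan is to adapt Ehresmann's classical fibration theorem to the Nash category, producing an explicit global Nash trivialization by integrating Nash lifts of the coordinate vector fields on $K^n$.

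First, for each $i = 1, \ldots, n$ I would construct a Nash vector field $V_i$ on $X$ satisfying $df(V_i) = \partial/\partial x_i$. Submersivity of $f$ provides Nash local sections of its differential, and hence Nash local lifts of $\partial/\partial x_i$; patching these local lifts with a Nash partition of unity on the affine Nash manifold $X$ (available by Chapter 8 of \cite{BCR}) yields a global $V_i$.

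Second, I would show that the flow $\phi^i_t$ of $V_i$ is complete, i.e.\ defined for all $t \in K$ and every initial point $x \in X$. Since $df(V_i) = \partial/\partial x_i$, along any flow line $t \mapsto \phi^i_t(x)$ the image $f \circ \phi^i_t(x)$ moves at unit speed in the $i$-th coordinate and is constant in the others, so for $t$ in a bounded interval it lies in a compact subset of $K^n$; properness of $f$ then forces the whole flow line to lie in a compact subset of $X$, and the standard extension criterion for Nash ODEs rules out escape-to-infinity and gives completeness.

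Third, I would define the candidate trivialization
$$\Phi : K^n \times f^{-1}(0) \longrightarrow X, \qquad \Phi(x_1,\ldots,x_n,z) = \phi^n_{x_n} \circ \phi^{n-1}_{x_{n-1}} \circ \cdots \circ \phi^1_{x_1}(z).$$
An easy induction on $i$ using $df(V_i) = \partial/\partial x_i$ shows $f \circ \Phi(x,z) = x$. The inverse of $\Phi$ is given by applying the same flows backwards in reverse order and is itself Nash, so $\Phi$ is a Nash diffeomorphism; this is exactly the Nash triviality condition of Definition \ref{4.1}.

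The main technical obstacle is the Nash ODE theory required to treat the flows $\phi^i_t$ as Nash maps over the general real closed field $K$, with the expected Nash dependence on both $t$ and the initial condition. Over $K = \mathbb R$ this is classical Picard--Lindel\"of; for general $K$ one either invokes the semi-algebraic ODE theory developed in \cite{BCR}, or transfers the result from the real case by the completeness of the theory of real closed fields, observing that the graph of the flow is a definable family cut out by the flow equation together with the boundedness consequence of properness established in step two.
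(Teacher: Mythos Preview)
Your approach has a genuine gap at the step where you integrate the Nash vector fields $V_i$: the flow of a Nash vector field is real analytic, but in general it is \emph{not} Nash, even over $K=\mathbb R$. A concrete instance already occurs for the proper submersion $f:S^1\times\mathbb R\to\mathbb R$ given by projection, with $S^1=\{(a,b):a^2+b^2=1\}$. The Nash lift $V=\partial_t+(-b\,\partial_a+a\,\partial_b)$ of $\partial/\partial t$ has flow
\[
\phi_s(a,b,t)=\bigl(a\cos s-b\sin s,\;a\sin s+b\cos s,\;t+s\bigr),
\]
and the functions $s\mapsto\cos s,\sin s$ are not semi-algebraic. Thus the trivialisation $\Phi$ you construct in step three is in general only a real-analytic diffeomorphism, not a Nash one, and the argument does not deliver Nash triviality. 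Your proposed rescue via transfer from $\mathbb R$ fails for the same reason: the graph of the flow is not a semi-algebraic set, so it is not cut out by any first-order formula in the language of ordered fields, and the completeness of the theory of real closed fields does not apply to it.

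This obstruction is exactly why the result is nontrivial and why the paper simply cites the literature (Coste--Shiota \cite{CS} and Escribano \cite{Es}) rather than reproducing a proof. Those papers do \emph{not} proceed by integrating vector fields; instead they first produce a semi-algebraic (or smooth) trivialisation---for instance via Hardt's theorem or an isotopy lemma---and then upgrade it to a Nash diffeomorphism using Nash approximation theorems, together with the fact that a Nash map sufficiently $C^1$-close to a diffeomorphism is itself a Nash diffeomorphism. If you want to supply a self-contained argument, that is the route to follow.
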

\begin{proof} See Proposition 5.2 of \cite{CS} on page 368 and the main theorem of \cite{Es} on page 1209.
\end{proof}
\begin{lemma}\label{nash_covers} Let $N$ be a Nash manifold over $K$. Then we can find a finite covering of $N$ by open semi-algebraic subsets which are Nash diffeomorphic to some affine space $K^n$.
\end{lemma}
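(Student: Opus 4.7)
The plan is to combine the local definition of a Nash manifold with the quasi-compactness of semi-algebraic spaces to produce the required finite cover by \emph{affine} Nash charts.

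First, I would show that every point $p \in N$ has an open Nash neighborhood Nash-diffeomorphic to some $K^n$. By the definition of a Nash manifold (see Chapter 2 of \cite{BCR}) there is an open Nash chart $\phi : U \to V$ around $p$, where $U \subseteq N$ is open semi-algebraic, $V \subseteq K^n$ is open semi-algebraic with $n = \dim_p N$, and $\phi$ is a Nash diffeomorphism. After translating we may assume $\phi(p) = 0$, and then we may choose $\epsilon > 0$ in $K$ small enough so that the open ball $B_\epsilon = \{x \in K^n : \|x\| < \epsilon\}$ lies in $V$. The preimage $\phi^{-1}(B_\epsilon) \subseteq N$ is then an open semi-algebraic neighborhood of $p$. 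Moreover $\phi$ identifies it with $B_\epsilon$, which is in turn Nash-diffeomorphic to $K^n$ via the explicit map $x \mapsto x/\sqrt{\epsilon^2 - \|x\|^2}$, with Nash inverse $y \mapsto \epsilon y/\sqrt{1 + \|y\|^2}$ (both are well-defined over any real closed field since $K$ is Pythagorean).

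Second, letting $p$ vary over $N$, these open neighborhoods form an admissible open covering of $N$. Since $N$ is a semi-algebraic space in the sense of Definition \ref{2.2}, the definition of the Grothendieck topology $\mathfrak{S}(N)$ guarantees that every admissible covering admits a finite refinement. Applying this to our covering yields the desired finite cover of $N$ by open semi-algebraic subsets each Nash-diffeomorphic to some affine space $K^n$.

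The main obstacle, modest as it is, lies in the quasi-compactness step: one needs $N$ itself (not merely its charts) to be semi-algebraic, so that finite subcovers of admissible covers automatically exist. In the context of this paper the Nash manifolds that arise are all affine semi-algebraic subsets of some $K^m$ in the sense of Definition \ref{2.1}, so this requirement is automatic; for a general paracompact locally semi-algebraic Nash manifold one would have to replace the above refinement step with a separate argument based on the defining locally finite covering, but this generality is not needed here.
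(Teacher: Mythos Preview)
The paper does not prove this lemma at all; it simply cites Lemma~3.2 of \cite{Es}. Your direct argument is essentially the standard one, and the first step---producing around each point an open semi-algebraic neighbourhood Nash-diffeomorphic to $K^n$ via a chart followed by the explicit ball-to-space map $x\mapsto x/\sqrt{\epsilon^2-\|x\|^2}$---is correct.

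The second step, however, is phrased circularly. In Definition~\ref{2.2} a family of open semi-algebraic subsets is declared to be a covering in $\mathfrak{S}(N)$ \emph{precisely when} it already admits a finite refinement that covers $N$; so asserting that $\{U_p\}_{p\in N}$ is an ``admissible covering'' is asserting the conclusion, not deducing it. What you actually need is the quasi-compactness theorem for semi-algebraic sets: any set-theoretic cover of a semi-algebraic subset of $K^m$ by open semi-algebraic subsets has a finite subcover. This is a genuine theorem (deducible for instance from the Finiteness Theorem in \cite{BCR}, or from the compactness of constructible subsets of the real spectrum), not a consequence of the definition of $\mathfrak{S}(N)$. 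Your final paragraph shows you are already aware of exactly this issue; the fix is simply to replace the appeal to Definition~\ref{2.2} by an appeal to that theorem, after which the argument is complete and gives what the paper's citation provides.
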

\begin{proof} This is Lemma 3.2 of \cite{Es} on page 1217.
\end{proof}
\begin{defn} A map $f:X\to Y$ of Nash manifolds is {\it locally Nash trivial} if there is admissible cover $\{U_i\mid i\in I\}$ of $Y$ and for every $i\in I$ a Nash manifold $X_i$ and a Nash diffeomorphism
$\phi_i:U_i\times X_i\to f^{-1}(U_i)$ such that $f\circ\phi_i:U_i\times X_i\to U_i$ is the projection onto the first coordinate. 
\end{defn}
\begin{cor}\label{thom_isotropy2} Let $f:X\to B$ be a proper surjective submersion of Nash manifolds. Then $f$ is locally Nash trivial. 
\end{cor}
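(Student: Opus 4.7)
The plan is straightforward: apply Lemma \ref{nash_covers} to the base $B$ and then invoke Theorem \ref{thom_isotropy1} fibrewise. By Lemma \ref{nash_covers} we obtain a finite (hence admissible) covering $\{U_i\mid i\in I\}$ of $B$ by open semi-algebraic subsets together with Nash diffeomorphisms $\psi_i:U_i\to K^{n_i}$. For each $i\in I$, the restriction $f_i:=f|_{f^{-1}(U_i)}:f^{-1}(U_i)\to U_i$ is again a proper surjective submersion of Nash manifolds, since all three of these properties are local on the target. Composing with $\psi_i$ therefore yields a proper surjective submersion $\psi_i\circ f_i:f^{-1}(U_i)\to K^{n_i}$, to which Theorem \ref{thom_isotropy1} applies directly.

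From Theorem \ref{thom_isotropy1} we obtain a Nash manifold $X_i$ and a Nash diffeomorphism $\phi'_i:K^{n_i}\times X_i\to f^{-1}(U_i)$ along which $\psi_i\circ f_i$ becomes projection onto the first factor. Transporting this back along $\psi_i$, define
$$\phi_i:=\phi'_i\circ(\psi_i\times\mathrm{id}_{X_i}):U_i\times X_i\to f^{-1}(U_i).$$
Then $\phi_i$ is a Nash diffeomorphism and $f\circ\phi_i$ is the projection onto the first coordinate, which is exactly the condition for local Nash triviality in the sense of Definition~4.3 of Section~\ref{fibration_section2}. The only point requiring verification is that properness, surjectivity and the submersion property all descend to restrictions over open semi-algebraic subsets of the base; this is standard in the theory of Nash manifolds developed in \cite{BCR}, so no substantive obstacle arises and the corollary is proved.
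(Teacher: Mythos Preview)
Your proof is correct and follows exactly the approach of the paper, which simply states that the corollary is ``immediate from Theorem \ref{thom_isotropy1} and Lemma \ref{nash_covers}.'' You have merely spelled out the details of how to transport the trivialisation back along the Nash diffeomorphisms $\psi_i$, which is entirely routine.
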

\begin{proof} Immediate from Theorem \ref{thom_isotropy1} and Lemma \ref{nash_covers}.
\end{proof}
\begin{defn}\label{4.6} Let $f:X\to B$ be a surjective submersion of Nash manifolds over $K$.  We say that $f$ is finitely compactifiable if it can be embedded into a commutative diagram
$$\xymatrix{ X\ar[r]^{j}\ar[rd]_{f} & Y\ar[d]^{h} & Z\ar[l]_{i}\ar[ld]^{g}\\
 & B & }$$
in which:
\begin{enumerate}
\item[$(a)$] the map $j$ is an open immersion, the set $j(X)$ is dense in every fibre of $h$, and $Y=i(Z)
\cup j(X)$;
\item[$(b)$] the map $h$ is a proper surjective submersion of Nash manifolds, 
\item[$(c)$] the map $g$ is proper and locally a Nash isomorphism.
\end{enumerate}
\end{defn}
In order to handle elementary fibrations, we will need the following mild generalisation:
\begin{thm}\label{thom_isotropy3} Let $f:X\to B$ be a finitely compactifiable surjective submersion of Nash manifolds over  $K$. Then $f$ is locally Nash trivial.\qed
\end{thm}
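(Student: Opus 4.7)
The plan is to bootstrap the compactification $h:Y\to B$, to which Corollary \ref{thom_isotropy2} already applies, and then to straighten the closed subset $i(Z)\subset Y$ by a fibrewise Nash isotopy so that it becomes a product after trivialisation. Condition $(a)$ of Definition \ref{4.6} identifies $j(X)$ with the fibrewise open complement of $i(Z)$ in $Y$, so any local Nash trivialisation of $h$ under which $i(Z)$ corresponds to a constant product will restrict to the desired local Nash trivialisation of $f$.

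First I would apply Corollary \ref{thom_isotropy2} to $h$ to obtain an admissible cover $\{U_\alpha\mid\alpha\in I\}$ of $B$ together with Nash diffeomorphisms $\phi_\alpha:U_\alpha\times F_\alpha\to h^{-1}(U_\alpha)$ over $U_\alpha$. Using Lemma \ref{nash_covers}, I refine the cover so that each $U_\alpha$ is Nash diffeomorphic to $K^{n_\alpha}$; since $g:Z\to B$ is proper with discrete fibres (being a local Nash isomorphism), it is a finite covering, and over each connected $U_\alpha\cong K^{n_\alpha}$ the pullback of $g$ trivialises, so $g^{-1}(U_\alpha)$ becomes a disjoint union of $k_\alpha$ copies of $U_\alpha$. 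Transporting through $i$ and $\phi_\alpha^{-1}$, the closed subset
$$S_\alpha:=\phi_\alpha^{-1}\bigl(i(Z)\cap h^{-1}(U_\alpha)\bigr)\subset U_\alpha\times F_\alpha$$
decomposes as a disjoint union of $k_\alpha$ Nash sections $u\mapsto(u,\sigma_{\alpha,\beta}(u))$ of the projection $U_\alpha\times F_\alpha\to U_\alpha$, taking pairwise distinct values in each fibre.

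The key step is then to construct, possibly after further shrinking $U_\alpha$, a Nash self-diffeomorphism $\Psi_\alpha:U_\alpha\times F_\alpha\to U_\alpha\times F_\alpha$ over $U_\alpha$ with
$$\Psi_\alpha\bigl(u,\sigma_{\alpha,\beta}(u)\bigr)=\bigl(u,\sigma_{\alpha,\beta}(u_0)\bigr)\qquad(\beta=1,\ldots,k_\alpha)$$
for a chosen basepoint $u_0\in U_\alpha$. One obtains such a $\Psi_\alpha$ by picking pairwise disjoint Nash tubular neighbourhoods of the graphs of the $\sigma_{\alpha,\beta}$, producing a fibrewise Nash vector field supported in their union whose time-one flow drags each graph to its value at $u_0$, and extending by zero; one must of course check that the resulting flow is genuinely a Nash diffeomorphism. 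Then $\phi_\alpha\circ\Psi_\alpha^{-1}$ is a new Nash trivialisation of $h^{-1}(U_\alpha)$ under which $i(Z)\cap h^{-1}(U_\alpha)$ corresponds to $U_\alpha\times T_\alpha$ for the finite subset $T_\alpha=\{\sigma_{\alpha,\beta}(u_0)\}\subset F_\alpha$. Restricting to the open complement gives a Nash diffeomorphism $U_\alpha\times(F_\alpha\setminus T_\alpha)\to f^{-1}(U_\alpha)$ over $U_\alpha$, which exhibits $f$ as locally Nash trivial.

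The main obstacle is precisely this straightening step, which is the Nash analogue of the Thom--Mather isotopy extension theorem for a finite family of disjoint sections of a trivial proper Nash submersion. Either one verifies it by hand using Nash tubular neighbourhoods and flows, being careful that all constructions remain inside the Nash category rather than merely the semi-algebraic one, or one invokes a parametric refinement of the Coste--Shiota/Escribano trivialisation theorems of \cite{CS} and \cite{Es}, of which Theorem \ref{thom_isotropy1} itself is the absolute (sectionless) case.
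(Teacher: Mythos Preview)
Your overall strategy---trivialise the proper map $h$ locally, trivialise the finite covering $g$, and then straighten the resulting family of disjoint sections so that the open complement $X=Y\setminus i(Z)$ inherits a product structure---matches the paper's own reduction. In fact the paper does not prove this theorem here: after using Lemma~\ref{nash_covers} to reduce to $B\cong K^n$ and observing that $h$ and $g$ are then Nash trivial, it explicitly defers the remaining ``quite technical'' argument to the sequel. So you have correctly identified where the actual content lies.

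Your first proposed mechanism for the straightening step, however, does not work in the Nash category. Nash functions on a connected Nash manifold are real-analytic, so a Nash function (or vector field) vanishing on a nonempty open subset vanishes identically; there are therefore no Nash bump functions, and hence no nonzero Nash vector field ``supported in'' a proper tubular neighbourhood as you propose. Even setting the support issue aside, the time-$t$ flow of a Nash vector field is obtained by integrating an ODE and is typically transcendental: already the flow of $x\,\partial/\partial x$ on $K$ is $(t,x)\mapsto e^tx$, which is not semi-algebraic. Your caveat that ``one must of course check that the resulting flow is genuinely a Nash diffeomorphism'' is precisely the point at which the argument breaks, and it is a genuine obstruction rather than a routine verification.

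Your second alternative is the correct one. What is needed is a Nash trivialisation of the \emph{pair} $(Y,i(Z))$ over $K^n$, that is, a Nash trivialisation of $h$ which simultaneously carries the closed Nash submanifold $i(Z)$ to a product; equivalently, simultaneous trivialisations of $g$ and $h$ compatible with the embedding $i$. This is exactly a relative form of the Coste--Shiota/Escribano results in \cite{CS} and \cite{Es}, whose proofs proceed via the real spectrum and model-theoretic transfer rather than via flows, and establishing it is the technical work the paper postpones.
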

In the proof of the theorem above by Lemma \ref{nash_covers} we may assume that $B$ is Nash isomorphic to an affine space $K^n$. Let 
$$\xymatrix{ X\ar[r]^{j}\ar[rd]_{f} & Y\ar[d]^{h} & Z\ar[l]_{i}\ar[ld]^{g}\\
 & B & }$$
be the type of diagram which is in Definition \ref{4.6}. By Lemma \ref{nash_covers} we know that both $h$ and $g$ are Nash trivial. Unfortunately the proof of the claim in this case is still quite technical, so we will relegate this argument to the second part of this series. 
\begin{defn}\label{4.8} For every field $F$ let $\mathcal{AF}_F$ denote the category of affine algebraic varieties over $F$. Let $M\subseteq\mathbb A_K^n$ be an affine variety over $K$. Then $M(K)$ is also a semi-algebraic set, and therefore we may equip $M(K)$ with the structure of a semi-algebraic space. This structure is independent of the choice of the embedding into affine space, and furnishes a functor $\mathcal{AF}_K\to\mathcal{SA}_K$ which we will simply denote by $M\mapsto M(K)$ by slight abuse of notation. 
\end{defn}
\begin{defn}\label{4.9} For every field $F$ let $\mathcal{VAR}_F$ denote the category of algebraic varieties over $F$. Let $M$ be an algebraic variety over $K$. Then there is a finite covering $M=\bigcup_{i\in I}M_i$ by Zariski-open affine subvarieties such that the intersections $M_i\cap M_j$ are also affine. By the above each $M_i(K)$ is canonically equipped with the the structure of a semi-algebraic space and the induced ringed space structure on the open subset $M_i(K)\cap M_j(K)\subseteq M_i(K)$ is the canonical semi-algebraic space structure on the affine algebraic set $M_i(K)\cap M_j(K)$. Therefore we may patch the ringed spaces $M_i(K)$ together to equip $M(K)$ with the structure of a locally semi-algebraic space. (See Lemma 2.1 of \cite{DeKn2} on page 12 for the details of this construction.) This structure is independent of the choice of the covering chosen, and furnishes a functor $\mathcal{VAR}_K\to\mathcal{LSA}_K$ which we will simply denote by $M\mapsto M(K)$ by slight abuse of notation. Note that this construction extends the functor $\mathcal{AF}_K\to\mathcal{SA}_K$ introduced in Definition \ref{4.8} above. 
\end{defn}
\begin{lemma}\label{para} Let $M$ be an object of
$\mathcal{VAR}_K$. Then the $lsa$-space $M(K)$ is paracompact. If $M$ is quasi-projective, then $M(K)$ is regular, too.  
\end{lemma}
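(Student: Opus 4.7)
My strategy is to treat paracompactness and regularity as two essentially independent tasks, since the quasi-projective hypothesis is only needed for the latter. For paracompactness I would appeal directly to the construction of $M(K)$ recalled in Definition \ref{4.9}. A finite affine open cover $\{M_i\}_{i \in I}$ of $M$ (with each $M_i \cap M_j$ affine) produces an admissible cover $\{M_i(K)\}_{i \in I}$ of $M(K)$ by $lsa$-spaces that are honest semi-algebraic sets in some $K^{n_i}$. Since $I$ is finite, this cover is automatically locally finite, and paracompactness follows straight from the definition.

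For the regularity assertion, assume now that $M$ is quasi-projective. Then $M$ admits a locally closed immersion $M \hookrightarrow \mathbb{P}^N_K$, and the induced map $M(K) \hookrightarrow \mathbb{P}^N(K)$ identifies $M(K)$ with a locally closed $lsa$-subspace of $\mathbb{P}^N(K)$. Regularity passes to locally closed subspaces: given a closed subset $A \subseteq M(K)$ and a point $x \in M(K) - A$, one writes $A = A' \cap M(K)$ for some $A'$ closed in $\mathbb{P}^N(K)$, separates $x$ from the closed set $A' \cup (\mathbb{P}^N(K) - M(K))$ inside $\mathbb{P}^N(K)$ by disjoint open neighbourhoods, and then intersects those neighbourhoods with $M(K)$. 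So the problem reduces to showing that $\mathbb{P}^N(K)$ itself is regular.

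To establish the regularity of $\mathbb{P}^N(K)$ I would realise it concretely as a closed semi-algebraic subset of affine space by means of the rank-one projector map
\[
[v_0 : \cdots : v_N] \longmapsto \Bigl(\tfrac{v_i v_j}{\sum_k v_k^2}\Bigr)_{0 \le i,j \le N} \in K^{(N+1)^2},
\]
whose image is cut out by the semi-algebraic conditions ``symmetric, idempotent, of trace one''. Once $\mathbb{P}^N(K)$ is exhibited as a closed semi-algebraic subset of $K^{(N+1)^2}$, the restricted Euclidean distance provides a semi-algebraic metric realising its topology, and the familiar construction of open balls of half the distance from a point to a given closed set produces the separating neighbourhoods required for regularity.

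The main obstacle is the very last reduction: verifying that the projector realisation of $\mathbb{P}^N(K)$ carries exactly the $lsa$-structure prescribed by Definition \ref{4.9}. I would check this by comparing with the standard affine charts $\{v_i \ne 0\}$, on each of which both structures identify $\mathbb{P}^N(K)$ with $K^N$ via polynomial, hence semi-algebraic, formulae. If one prefers to avoid this compatibility verification, the regularity of projective $lsa$-spaces can be imported directly from the results of Delfs and Knebusch in \cite{DeKn2}; the paracompactness half of the lemma is essentially formal and does not require any input of this kind.
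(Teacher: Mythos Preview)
Your proposal is correct and follows essentially the same approach as the paper. For paracompactness you both invoke the finite affine cover from Definition \ref{4.9}; for regularity you both embed $\mathbb{P}^N(K)$ as a closed semi-algebraic subset of affine space via the rank-one projector map (the paper cites this as Theorem 3.4.4 of \cite{BCR}) and then use that locally closed semi-algebraic subspaces of $K^n$ are regular---you simply spell out the projector map and the metric argument that the paper leaves implicit in the citation.
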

\begin{proof} Since $M(K)$ is has a finite admissible cover by open semi-algebraic subspaces, it is paracompact. Since the projective space $\mathbb P^n(K)$ has a closed semi-algebraic embedding into $K^{n^2}$ (see Theorem 3.4.4 of \cite{BCR} on page 72), we get that $M(K)$ has a locally closed semi-algebraic embedding into $K^{n^2}$, if $M$ is quasi-projective. Since every locally closed semi-algebraic subspace of $K^n$ is regular, the second claim follows.
\end{proof}
\begin{rems}\label{4.11} $(i)$ Let $L$ be a real closed extension of $K$, and let $M$ be a quasi-projective variety over $K$. Then we have a natural isomorphism
$$M(L)\cong e_{L/K}(M(K)),$$
and therefore a natural weak equivalence $S_*(M(K))\to S_*(M(L))$ by Lemma \ref{para} and part $(i)$ of Theorem \ref{basic_comparison}. Similarly for every $m\in M(K)$ there is a natural weak equivalence $S_*(M(K),s_m)\to S_*(M(L),s_m)$ in $SSets^0$.

$(ii)$ Assume now that $K$ is the real number field $\mathbb R$. Then there is a natural weak equivalence $S_*(M(\mathbb R))\to S^{top}_*(M(\mathbb R))$ in $SSets$, and for every
$m\in M(\mathbb R)$ a natural weak equivalence $S_*(M(\mathbb R),s_m)\to S^{top}_*(M(\mathbb R),s_m)$ in $SSets^0$ by Lemma \ref{para} and part $(ii)$ of Theorem \ref{basic_comparison}.
\end{rems}
\begin{defn} Let $L/F$ be a finite extension of fields, and let $X$ be a variety defined over $L$. The contravariant functor
$\mathrm{Res}_{L/F}X$ from the category of $F$-schemes to sets is defined by
$$\mathrm{Res}_{L/F}X(S) = X(S \times_FL).$$
The variety that represents this functor is called the restriction of scalars, and is unique up to unique isomorphism if it exists. 
\end{defn}
\begin{prop}\label{blr} Let $L/F$ be a finite extension of fields, and let $X$ be an $L$-variety. If every finite subset of $X$ is contained in some affine open subset of $X$ then
$\mathrm{Res}_{L/F}X$ is representable.
\end{prop}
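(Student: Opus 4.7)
The plan is to follow the classical strategy of Weil and reduce to the affine case by a gluing construction. The proof has two main pieces: an explicit construction in the affine case, and a gluing argument in general.

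For the affine case, suppose $X = \mathrm{Spec}(A)$ with $A = L[x_1,\ldots,x_n]/(f_1,\ldots,f_m)$. Fix an $F$-basis $e_1,\ldots,e_d$ of $L$, introduce indeterminates $y_{ij}$ over $F$ for $1\le i\le n$, $1\le j\le d$, and substitute $x_i = \sum_j e_j y_{ij}$ into each $f_k$. Expanding each $f_k$ uniquely in the basis $\{e_l\}$ yields $f_k = \sum_l e_l g_{kl}(y)$ with $g_{kl}\in F[y_{ij}]$, and I would check directly from the definition that $\mathrm{Spec}(F[y_{ij}]/(g_{kl}))$ represents $\mathrm{Res}_{L/F}X$ by checking the universal property on $F$-algebras $R$ (a map of $F$-algebras from the constructed ring to $R$ matches up with an $L$-algebra map $A\to R\otimes_F L$ by linearity in the $e_j$).

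For the general case, take an affine open cover $\{U_i\}$ of $X$. I would establish three points: (a) each $\mathrm{Res}_{L/F}U_i$ is representable by the affine case; (b) for any open immersion $U\hookrightarrow X$ the induced map $\mathrm{Res}_{L/F}U \to \mathrm{Res}_{L/F}X$ is representable by an open immersion of $F$-functors; and (c) the family $\{\mathrm{Res}_{L/F}U_i\}$ covers $\mathrm{Res}_{L/F}X$ in the Zariski topology. Standard gluing of functors then produces an $F$-scheme representing $\mathrm{Res}_{L/F}X$. To verify (b), given any $T\to\mathrm{Res}_{L/F}X$, i.e.\ a morphism $\phi: T\times_F L\to X$, the fiber product $T\times_{\mathrm{Res}_{L/F}X}\mathrm{Res}_{L/F}U$ is represented by the locus of $t\in T$ whose entire fiber in $T\times_F L$ maps into $U$. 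This locus is open because its complement is the image of the closed set $\phi^{-1}(X\setminus U)$ under the finite (in particular proper) projection $T\times_F L\to T$, hence closed.

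The main obstacle — and the place where the AF-hypothesis enters — is step (c). Given any $\phi: T\times_F L\to X$ and any point $t\in T$, the fiber $(T\times_F L)_t$ has at most $[L:F]$ points, so its image under $\phi$ is a finite subset of $X$. By the hypothesis this finite subset is contained in some affine open $U_i$, and then by (b) an open neighbourhood of $t$ in $T$ lies in the subfunctor $\mathrm{Res}_{L/F}U_i$. This gives the Zariski covering of $\mathrm{Res}_{L/F}X$ by the $\mathrm{Res}_{L/F}U_i$, and together with (a) and (b) the standard representability-by-gluing criterion yields an $F$-scheme representing $\mathrm{Res}_{L/F}X$.
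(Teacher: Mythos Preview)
Your argument is correct and is essentially the classical Weil-restriction proof as presented in \cite{BLR}, \S7.6. The paper, however, does not reprove this: its entire proof is the one-line citation ``This is a special case of Theorem 4 of \cite{BLR} in \S7.6.'' So your proposal is not wrong, but it is far more detailed than what the paper actually does; you have in effect reproduced the content of the reference rather than invoked it.

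One small point worth tightening in your write-up: in step (c) you fix an affine open cover $\{U_i\}$ at the outset and then assert that the finite image set lies in \emph{some $U_i$}. The hypothesis only guarantees an affine open containing the finite set, not that it is one of your chosen $U_i$. The fix is to take the covering family to be \emph{all} affine opens of $X$ (as BLR does), or to observe that any such affine open may be adjoined to the cover without affecting (a) and (b). With that adjustment your gluing argument goes through verbatim.
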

\begin{proof} This is a special case of Theorem 4 of \cite{BLR} in \S7.6. 
\end{proof}
\begin{defn}\label{4.14} Now let $X$ be an algebraic variety over $C$ such that $\mathrm{Res}_{C/K}X$ is representable. By Proposition \ref{blr} above this is the case for example when $X$ is quasi-projective. Note that $X(C)=
\mathrm{Res}_{C/K}X(K)$ so we get that $X(C)$ has the structure of an $lsa$-space which is both regular and paracompact. When $X$ is any variety we can still equip $X(C)$ with the structure of an $lsa$-space by covering $X$ with finitely many Zariski-open affine subvarieties whose intersections are also affine, and use the same construction as in Definition \ref{4.9}. We will denote this $lsa$-space by$X_{lsa}$, or when there could be no confusion, simply by $X$. We get a functor $\mathcal{VAR}_C\to\mathcal{LSA}_K$. For every $X$ as above we will call $S_*(X)$ the {\it topological type} of $X$ and $\pi_0(X)$ is the {\it set of connected components} of $X$ (or of $X(C)$). Now let $m\in X(C)$. Then $(X,m)$ is a pointed $lsa$-space. For every positive integer $n$ we will call $\pi_n(X,m)$ the {\it $n$-th homotopy group} of the pair $(X,m)$. When we want to distinguish these groups, etc.~from other similar constructions, we will add the adjective semi-algebraic. 
\end{defn}
For every real closed field $L$ let $L(\mathbf i)=L(\sqrt{-1})$ be its algebraic closure. Then for every extension $L/K$ of real closed fields we have a corresponding extension $L(\mathbf i)/
K(\mathbf i)$ of algebraically closed fields.
\begin{thm}\label{basic_comparison2} $(i)$ Let $L$ be a real closed extension of $K$, and let $X$ be a quasi-projective variety over $K(\mathbf i)$. There is a natural weak equivalence $S_*(X)\!\to\! S_*(X_{L(\mathbf i)})$, and for every $m\in X(K(\mathbf i))$ there is a natural weak equivalence $S_*(X,s_m)\to S_*(X_{L(\mathbf i)},s_m)$ in $SSets^0$.

$(ii)$ Assume now that $K$ is the real number field $\mathbb R$. Then there is a natural weak equivalence $S_*(X)\to S^{top}_*(X)$ in $SSets$, and for every
$m\in X(\mathbb C)$ a natural weak equivalence $S_*(X,s_m)\to S^{top}_*(X,s_m)$ in $SSets^0$.
\end{thm}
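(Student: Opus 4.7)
The plan is to reduce Theorem \ref{basic_comparison2} to Theorem \ref{basic_comparison} via the restriction of scalars construction. Given a quasi-projective variety $X$ over $K(\mathbf i)$, set $Y = \mathrm{Res}_{K(\mathbf i)/K} X$. By Proposition \ref{blr}, $Y$ is representable as a $K$-variety, and in fact $Y$ is quasi-projective over $K$, since the Weil restriction of a quasi-projective variety along a finite separable extension is quasi-projective. By Lemma \ref{para}, $Y(K)$ is then a regular and paracompact $lsa$-space over $K$, and by construction (Definition \ref{4.14}) this coincides with $X_{lsa}$.

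For part $(i)$, the key compatibility is that Weil restriction commutes with base change: since $L\otimes_K K(\mathbf i) = L(\mathbf i)$, we have a natural isomorphism $Y \times_K L \cong \mathrm{Res}_{L(\mathbf i)/L}(X_{L(\mathbf i)})$, and hence the extension-of-scalars functor on $lsa$-spaces satisfies $e_{L/K}(Y(K)) \cong Y(L) = X(L(\mathbf i)) = (X_{L(\mathbf i)})_{lsa}$, compatibly with the chosen basepoints. Applying part $(i)$ of Theorem \ref{basic_comparison} to the regular, paracompact $lsa$-space $M = Y(K)$ then produces the required natural weak equivalences $S_*(X) \to S_*(X_{L(\mathbf i)})$ and $S_*(X, s_m) \to S_*(X_{L(\mathbf i)}, s_m)$.

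For part $(ii)$, take $K = \mathbb R$, so $K(\mathbf i) = \mathbb C$ and $Y(\mathbb R) = X(\mathbb C)$. One verifies that the strong topology on the $lsa$-space $Y(\mathbb R)$ agrees with the classical complex analytic topology on $X(\mathbb C)$: this reduces, via a finite affine cover of $X$, to the identification of the strong topology on $\mathbb R^{2n}$ as an $lsa$-space with the Euclidean topology on $\mathbb C^n$, which is immediate from Definitions \ref{2.1} and \ref{4.8}. Consequently the classical singular simplicial set $S^{top}_*(X)$ equals $S^{top}_*(Y(\mathbb R))$ in the sense of Theorem \ref{basic_comparison}$(ii)$, and that theorem yields the desired weak equivalences, in both the unpointed and pointed versions.

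The main obstacle will be verifying the compatibilities at the level of $lsa$-structures: that $Y$ is still quasi-projective (so that Lemma \ref{para} applies directly), that the $lsa$-structure on $Y(K)$ from Definition \ref{4.9} really is the one used to define $X_{lsa}$ in Definition \ref{4.14}, that $e_{L/K}$ is compatible with Weil restriction in the precise sense needed above, and for $(ii)$ that the strong and complex topologies agree. Each of these is essentially definitional or standard, but requires a careful unwinding of the constructions in Section 2 and of the Weil restriction; once they are in place the statement follows formally from Theorem \ref{basic_comparison}.
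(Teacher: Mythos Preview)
Your proposal is correct and follows essentially the same route as the paper: the paper's one-line proof (``Immediate from Remark \ref{4.11}'') amounts to applying Lemma \ref{para} and Theorem \ref{basic_comparison} to the Weil restriction $Y=\mathrm{Res}_{K(\mathbf i)/K}X$, exactly as you outline. Your write-up simply makes explicit the compatibilities (quasi-projectivity of the Weil restriction, $e_{L/K}(Y(K))\cong Y(L)=X(L(\mathbf i))$, and agreement of the strong and complex topologies) that the paper leaves implicit in Definition \ref{4.14} and Remark \ref{4.11}.
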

\begin{proof} Immediate from Remark \ref{4.11}.
\end{proof}
\begin{lemma}\label{conn1} Let $X$ be a connected quasi-projective rational curve over $C$. Then the the semi-algebraic set $X(C)$ is connected. 
\end{lemma}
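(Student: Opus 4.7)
The plan is to reduce to the smooth projective model and then to semi-algebraic path-connectedness in $K^2$.

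First I would replace $X$ by its normalization $\nu\colon\widetilde X\to X$. Since $X$ is an irreducible rational curve, $\widetilde X$ is a smooth irreducible quasi-projective rational curve, and $\nu$ is a finite surjection; by the functoriality of Definition \ref{4.9} it gives a continuous semi-algebraic surjection $\widetilde X(C)\to X(C)$. Continuous semi-algebraic surjections preserve semi-algebraic path-connectedness, so it would suffice to treat $\widetilde X$; thus I may assume $X$ itself is smooth.

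Next, any smooth quasi-projective rational curve is an open subvariety of its smooth projective model, and this model is $\mathbb{P}^1_C$ by uniqueness of the smooth projective model of a rational function field. Thus $X(C)=\mathbb{P}^1(C)\setminus\{p_1,\ldots,p_n\}$ for some finite (possibly empty) set of points. If $n\geq 1$, I would apply an automorphism of $\mathbb{P}^1_C$ sending $p_n$ to $\infty$, reducing to the assertion that $K^2\setminus S$ is semi-algebraically path-connected for any finite $S\subset K^2$ (using $C=K(\mathbf i)\cong K^2$ as a semi-algebraic space over $K$). For $a,b\in K^2\setminus S$, only finitely many directions $\vec v\in K^2$ cause one of the lines $a+K\vec v$, $b+K\vec v$ to meet $S$ or to coincide; for any other $\vec v$ one connects $a$ and $b$ by travelling along these two parallel lines together with a transverse segment chosen to miss $S$, yielding an explicit semi-algebraic polygonal path. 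The case $n=0$ follows by covering $\mathbb{P}^1(C)$ by its two standard affine charts, each isomorphic to $K^2$ and hence path-connected, overlapping in the path-connected set $K^2\setminus\{0\}$ (by the same polygonal argument).

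The only step needing real care is the polygonal construction in $K^2\setminus S$: one must verify that a generic choice of direction and of transverse segment avoids the finite obstacle set, but this is a routine counting argument available in any real closed field. Everything else then follows formally from the fact that continuous images and finite unions of semi-algebraically path-connected sets with path-connected overlaps are path-connected.
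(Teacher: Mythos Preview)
Your proof is correct and follows the same core strategy as the paper: reduce to showing that $K^2\setminus S$ is semi-algebraically path-connected for a finite set $S$, and then exhibit an explicit polygonal path. The paper's reduction covers $X(C)$ by the sets $X(C)\setminus\{x\}$ (pairwise overlapping) and identifies each with an open subset of $\mathbb{A}^1(C)\cong K^2$; its polygonal path uses two non-parallel lines through the two target points, meeting at a third point. Your reduction via an automorphism of $\mathbb{P}^1$ sending a missing point to $\infty$ (with $\mathbb{P}^1$ handled separately by two charts) and your path built from two parallel lines plus a transversal are inessential variations on the same idea. One genuine difference: your initial normalization step is an improvement over the paper, whose reduction tacitly assumes $X$ is already smooth (otherwise $X\setminus\{x\}$ need not embed in $\mathbb{A}^1_C$). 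In the paper this does no harm, since the lemma is only ever applied to open subcurves of $\mathbb{P}^1_C$, but your version covers singular rational curves as well.
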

\begin{proof} Note that it will be sufficient to prove that $X(C)-x$ is connected for every $x\in X(C)$ since the union of these is $X(C)$, and any two has a non-empty intersection. Therefore we may assume without the loss of generality that $X=\mathbb A^1_C-S$, where $S\subset\mathbb A^1(C)$ is a finite set. Then $X(C)$ is isomorphic to $K^2-R$ for some finite set $R\subset K^2$ as a semi-algebraic space. Clearly it will be enough to show that the latter is connected. Let $x,y\in K^2-R$ be two arbitrary points. Since $K$ is infinite, there are $K$-linear lines $L,M\subset K^2$ containing $x$ and $y$, respectively, such that $L$ and $M$ do not intersect $R$ and they are not parallel. Let $z$ be the intersection of $L$ and
$M$, and let $T$ be the union of the closed segment of $L$ between $x$ and $z$, and the closed segment of $M$ between $z$ and $y$. Then $T$ is a connected semi-algebraic set in $K^2-R$, and therefore $x$ and $y$ are in the same connected component of $K^2-R$.
\end{proof}
\begin{lemma}\label{conn2} Let $X$ be a connected quasi-projective algebraic curve over $C$. Then the semi-algebraic set $X(C)$ is connected. 
\end{lemma}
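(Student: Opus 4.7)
The plan is to perform standard algebro-geometric reductions to the case of a smooth irreducible projective curve, and then to deduce connectedness over $K(\mathbf{i})$ from the classical statement over $\mathbb{C}$ by a transfer argument resting on Theorem \ref{basic_comparison2} and the completeness of the theory of real closed fields.

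For the reductions, first I would decompose $X = X_1 \cup \cdots \cup X_n$ into irreducible components; connectedness of $X$ forces the intersection graph of the $X_i$'s to be connected, and since $C$ is algebraically closed each non-empty $X_i \cap X_j$ contains a $C$-point, so it suffices to handle each $X_i$ separately. Next, the normalisation $\nu : \widetilde{X} \to X$ is finite and surjective on $C$-points with $\widetilde{X}$ smooth irreducible quasi-projective, reducing to $X$ smooth and irreducible. Embedding $X$ as a dense open subset of a smooth projective completion $\bar{X}$ with finite complement $F$, the space $\bar{X}(C)$ is a compact $2$-real-dimensional Nash manifold, and removing finitely many points from such a manifold preserves connectedness of each component --- a local statement which by the argument of Lemma \ref{conn1} reduces to the connectedness of $K^2$ minus a finite set. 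So the problem reduces to showing $\bar{X}(C)$ is connected when $\bar{X}$ is a smooth irreducible projective curve over $C$.

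The decisive step is a transfer argument. Fix the Hilbert polynomial $P$ of $\bar{X}$ and an embedding $\bar{X} \hookrightarrow \mathbb{P}^N_C$. Then $\bar{X}$ is cut out by polynomials of bounded degree whose coefficients form a tuple $\bar{a} = \bar{\alpha} + \mathbf{i}\bar{\beta}$, with $\bar{\alpha},\bar{\beta} \in K^m$. The predicate ``$(\bar{\alpha},\bar{\beta})$ defines a smooth irreducible projective curve of Hilbert polynomial $P$'' is a first-order formula $\Phi(\bar{\alpha},\bar{\beta})$ in the language of ordered fields --- smoothness, projectivity and primality of an ideal with bounded-degree generators being first-order conditions --- and ``the associated semi-algebraic subset of $K^{2N}$ has exactly one connected component'' is a first-order formula $\Psi(\bar{\alpha},\bar{\beta})$ by uniform semi-algebraic cell decomposition. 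The sentence $\forall \bar{\alpha}\,\bar{\beta}\,(\Phi \Rightarrow \Psi)$ therefore has the same truth value in every real closed field. For $K=\mathbb{R}$, Theorem \ref{basic_comparison2}(ii) identifies the semi-algebraic and topological counts of connected components, and the statement becomes the classical fact that a smooth irreducible projective complex curve has connected $\mathbb{C}$-points. Transferring and specialising to the parameters defining our $\bar{X}$ gives connectedness of $\bar{X}(C)$ and hence of $X(C)$.

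The main obstacle I foresee is writing ``smooth irreducible projective curve of Hilbert polynomial $P$'' and ``has a single connected component'' as explicit first-order formulas with the proper bounded-degree witnesses; both are classical but deserve care. An alternative route, closer in spirit to the fibration methodology of this section, would be to fix a finite morphism $f : \bar{X} \to \mathbb{P}^1_C$ with branch locus $B$, observe that $f^{-1}(\mathbb{P}^1 \setminus B) \to \mathbb{P}^1 \setminus B$ is a locally semi-algebraic covering so that Theorem \ref{fibration_theorem1} and Lemma \ref{conn1} give a Kan fibration with connected base, and then reduce connectedness of the total space to the transitivity of the monodromy action on a fibre; this monodromy statement is again first-order and transfers from the classical Galois-theoretic description over $\mathbb{C}$.
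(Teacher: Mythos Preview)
Your argument is correct, and the first-order encoding you flag as the main obstacle does go through: for a fixed Hilbert polynomial the locus of smooth geometrically irreducible curves in the Hilbert scheme is constructible and defined over $\mathbb Q$, so membership is a first-order condition in the coefficients, and uniform cell decomposition handles the connectedness predicate $\Psi$. Your reductions via normalisation and completion are also more careful than the paper's, which jumps directly to the smooth projective model $\widetilde X$ without explicitly treating the reducible or singular case.

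The paper takes a completely different, self-contained route that does not import the complex-analytic fact. It fixes a semi-algebraic connected component $S\subseteq X(C)$, observes via the implicit function theorem that $S$ is infinite, and chooses $2g+2$ points of $S$ to form a very ample divisor $D$ on $\widetilde X$. For an arbitrary $x\in X(C)$, Bertini produces a hyperplane $H$ through $x$ transverse to $\widetilde X$, and the pencil spanned by $D$ and $H\cap\widetilde X$ gives a map $f:\widetilde X\to\mathbb P^1_C$ with $f^{-1}(0)=D\subset S$, $x\in f^{-1}(\infty)$, and $f$ \'etale over both. Lemma~\ref{conn1} then supplies a semi-algebraic path in $U(C)$ from $0$ to $\infty$ (where $U\subset\mathbb P^1$ is the \'etale locus), and since coverings of the contractible $I_K$ are trivial, the path lifts to $f^{-1}(U)(C)$ with one endpoint at $x$ and the other in $D\subset S$.

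Your transfer proof is shorter and more conceptual, at the cost of relying on the classical result over $\mathbb C$ and on the non-trivial constructibility of the irreducible locus. The paper's pencil-and-lift argument is more hands-on but internal to the semi-algebraic category, and the same trick is recycled verbatim in the proof of Lemma~\ref{curve_riemann2}. Your alternative via monodromy of a finite cover of $\mathbb P^1$ is closer in spirit to the paper, though the paper sidesteps monodromy entirely by simply lifting a single path over the contractible interval.
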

\begin{proof} Let $g$ be the genus of the unique smooth, projective, irreducible curve $\widetilde X$ containing $X$ as an open subscheme. Let $S\subseteq X(C)$ be a semi-algebraic connected component. Since $S$ is open in the strong topology, by the semi-algebraic version of the implicit function theorem (see Corollary 2.9.8 of \cite{BCR} on pages 56-57) the set $S$ is infinite. In particular there are $2g+2$ different points $p_1,p_2,\ldots$ in $S$. Let $D$ be the divisor $p_1+p_2+\cdots+p_{2g+2}$ and consider the line bundle $\mathcal O(D)$ on $\widetilde X$. By a routine application of the Riemann--Roch theorem we get that the full linear system of $\mathcal O(D)$ furnishes a projective embedding of $\widetilde X$ into $\mathbb P^{g+3}_C$. Now let $x\in X(C)$ be arbitrary. By Bertini's hyperplane section theorem there is a hyperplane $H\subset\mathbb P^{g+3}_C$ which intersects $\widetilde X$ transversally, contains $x$, and does not contain any point of the complement $\widetilde X-X$. The pencil spanned by $D$ and $H\cap\widetilde X$ furnishes a map $f:\widetilde X\to\mathbb P^1_C$ such that the pre-image of $0$ is $D$, the pre-image of $\infty$ is $H\cap\widetilde X$, and $f$ is \'etale at each point of $D$ and $H\cap\widetilde X$. There is a non-empty open subcurve $U\subseteq\mathbb P^1_C$ which contains $0,f(x)$, and the restriction of $f$ onto $f^{-1}(U)$ is a finite, \'etale map onto $U$. By Lemma \ref{conn1} there is a continuous semi-algebraic map $p:I_K\to U(C)$ such that $p(0)=0$ and $p(1)=\infty$. The pull-back of the map $f^{-1}(U)(C)\to U(C)$ with respect to $p$ a covering of $I_K$. Since $I_K$ is contractible, this covering is trivial, so it has a semi-algebraic section. We get that here is a continuous semi-algebraic map $\widetilde p:I_K\to f^{-1}(U)(C)$ such that $p(1)=x$ and $p(1)\in f^{-1}(0)\subset S$. Therefore $x$ also lies in $S$, and hence $X(C)$ is connected. 
\end{proof}
\begin{prop}\label{connectivity} Let $X$ be a connected quasi-projective algebraic variety over $C$. Then the semi-algebraic set $X(C)$ is connected. 
\end{prop}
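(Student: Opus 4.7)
The plan is to reduce the general case to the curve case already established in Lemma \ref{conn2}.

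First I would reduce to the case that $X$ is irreducible. Write $X = X_1 \cup \cdots \cup X_r$ as the finite union of its irreducible components; each $X_i$ is itself a connected quasi-projective variety over $C$. Because $X$ is connected as a scheme over the algebraically closed field $C$, the intersection graph of the components --- whose vertices are the $X_i$ and whose edges record nonempty pairwise intersections --- is connected, and every nonempty intersection $X_i \cap X_j$ contains a closed point, hence a point of $X(C)$. Consequently, if each $X_i(C)$ is connected as a semi-algebraic set, then $X(C)$ is connected as well, since we can walk from any component to any other through these shared $C$-points.

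So assume $X$ irreducible of dimension $d$; I would argue by induction on $d$. The case $d = 0$ is trivial and $d = 1$ is exactly Lemma \ref{conn2}. For $d \geq 2$, pick any two points $x, y \in X(C)$; it suffices to construct a connected algebraic curve $Y \subset X$ with $x, y \in Y(C)$, since Lemma \ref{conn2} together with the preceding reduction to irreducible components applied to $Y$ would show that $x$ and $y$ lie in the same semi-algebraic component of $X(C)$. To build $Y$, embed $X$ as a locally closed subvariety of some $\mathbb{P}^N_C$ and let $\overline{X}$ denote its projective closure. A Bertini-type argument applied to a sufficiently general linear subspace $L \subset \mathbb{P}^N_C$ of codimension $d-1$ passing through both $x$ and $y$ should produce an irreducible curve $\overline{Y} := L \cap \overline{X}$, and one further genericity condition ensures that $\overline{Y}$ is not entirely contained in the boundary $\overline{X} \setminus X$, so that $Y := \overline{Y} \cap X$ is a dense open subcurve of $\overline{Y}$ still containing $x$ and $y$. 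As an irreducible quasi-projective curve, $Y$ is connected, and Lemma \ref{conn2} finishes the job.

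The hard part will be making the two-point Bertini claim in the last step rigorous: the classical Bertini theorem produces irreducible linear sections only for generic $L$, whereas the requirement that $L$ contain two prescribed points cuts out a proper subvariety of the relevant Grassmannian, so one must check that irreducibility and the non-containment-in-boundary property survive on this smaller locus. I would handle this either by cutting one hyperplane at a time through $x$ and $y$ via an inductive Bertini argument, or --- if that turns out to be awkward --- by connecting $x$ and $y$ through a chain of irreducible curves passing through auxiliary intermediate points at which the unrestricted Bertini theorem applies, thereby producing a connected (but possibly reducible) one-dimensional subscheme of $X$ joining $x$ to $y$, to which Lemma \ref{conn2} and the reduction to components then apply.
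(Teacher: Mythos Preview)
Your outline is sound and arrives at the same endpoint as the paper---reduce to Lemma \ref{conn2} by producing a connected curve through two given points---but the route differs in one key respect. The paper does \emph{not} decompose $X$ into irreducible components; instead it immediately passes to a resolution of singularities $f:\widetilde X\to X$ with $\widetilde X$ smooth, connected, quasi-projective, lifts the two points, and then invokes Bertini on the smooth $\widetilde X$ to obtain a \emph{smooth} connected curve through the lifts. This buys two things: first, Bertini-with-two-base-points is on firmer ground for smooth varieties (the general hyperplane through $x,y$ meets $\widetilde X$ transversally at those points, so smoothness of the section is automatic); second, the proof of Lemma \ref{conn2} as written in the paper tacitly assumes the curve is smooth (it speaks of ``the unique smooth projective irreducible curve $\widetilde X$ containing $X$ as an open subscheme''), so the paper's route feeds Lemma \ref{conn2} exactly the input its proof can digest.

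Your approach avoids resolution of singularities, which is a genuine economy, but the curve $Y$ you produce may be singular, and then Lemma \ref{conn2} as \emph{proved} in the paper does not literally apply. This is a harmless gap---normalise $Y$, apply Lemma \ref{conn2} to the normalisation, and push forward---but you should say so. As for the Bertini-through-two-points step you flag as ``the hard part'': the paper faces exactly the same issue and simply asserts it (``By Bertini's hyperplane section theorem there is a connected smooth quasi-projective curve containing both $x$ and $y$''), so your caution there is well placed but not a point of divergence. Your fallback of connecting $x$ and $y$ by a chain of irreducible curves is a perfectly standard and safe alternative.
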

\begin{proof} It will be enough to show that for any $x,y\in X(C)$ there is a connected semi-algebraic subset $S\subseteq X(C)$ containing both. By the resolution of singularities there is a surjective map $f:\widetilde X\to X$ such that $\widetilde X$ is quasi-projective, smooth and connected. Since $f$ is surjective, there are $\widetilde x,\widetilde y\in\widetilde X(C)$ such that $f(\widetilde x)=x$ and $f(\widetilde y)=y$. Since the image of a connected semi-algebraic set under a semi-algebraic map is both connected and semi-algebraic, it will be enough to show that there is a connected semi-algebraic subset $S\subseteq\widetilde X(C)$ containing both
$\widetilde x$ and $\widetilde y$. In other words we may assume that $X$ is smooth. By Bertini's hyperplane section theorem there is a connected smooth quasi-projective curve containing both $x$ and $y$. The proposition now follows from Lemma \ref{conn2}.
\end{proof}
\begin{defn} A finitely compactifiable nice map over a field $F$ is a morphism of smooth $F$-varieties $f:U\to S$ that can be embedded into a commutative diagram
$$\xymatrix{ U\ar[r]^{j}\ar[rd]_{f} & Y\ar[d]^{h} & Z\ar[l]_{i}\ar[ld]^{g}\\
 & S & }$$
in which:
\begin{enumerate}
\item[$(a)$] the map $j$ is an open immersion, the set $j(U)$ is dense in every fibre of $h$, and $Y=i(Z)
\cup j(U)$;
\item[$(b)$] the map $h$ is smooth and projective, with geometrically irreducible fibres,
\item[$(c)$] the map $g$ is finite and \'etale, and each fibre of $g$ is nonempty.
\end{enumerate}
\end{defn}
\begin{thm}\label{fibration-theorem2} Let $f:X\to B$ be finitely compactifiable nice map of smooth varieties over $C$. Then the map $f_*:S_*(X)\to S_*(B)$ induced by $f$ is fibration.
\end{thm}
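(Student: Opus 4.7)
The strategy is to reduce to Theorem \ref{fibration_theorem1} by showing that on the level of $lsa$-spaces the map $f:X(C)\to B(C)$ is locally trivial, and this in turn will follow from Theorem \ref{thom_isotropy3} once we identify $f$ as a finitely compactifiable surjective submersion of Nash manifolds in the sense of Definition \ref{4.6}.

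First, since $X$, $B$, $Y$ and $Z$ are smooth varieties over $C$, their $lsa$-spaces $X(C)$, $B(C)$, $Y(C)$, $Z(C)$ (obtained via restriction of scalars from $C$ to $K$, as in Definition \ref{4.14}) inherit the structure of Nash manifolds over $K$, and the algebraic diagram
$$\xymatrix{ X\ar[r]^{j}\ar[rd]_{f} & Y\ar[d]^{h} & Z\ar[l]_{i}\ar[ld]^{g}\\ & B & }$$
induces a diagram of Nash maps over $K$. I would then verify the three conditions of Definition \ref{4.6}: $(a)$ the map $j:X(C)\to Y(C)$ is an open immersion, since $j$ is algebraically an open immersion and $Y$ is smooth, and its image is dense in every fibre of $h$ because Zariski-density in the fibres of a smooth morphism translates into strong-topology density on $C$-points via the semi-algebraic implicit function theorem of \cite{BCR}; the decomposition $Y(C)=i(Z(C))\cup j(X(C))$ is inherited from the algebraic one; $(b)$ $h:Y(C)\to B(C)$ is a proper surjective submersion of Nash manifolds, because $h$ is smooth (giving a submersion), projective (giving properness in the semi-algebraic sense, via the closed semi-algebraic embedding of projective space already used in the proof of Lemma \ref{para}), and has geometrically irreducible, hence nonempty, fibres (giving surjectivity on $C$-points by Proposition \ref{connectivity}); $(c)$ $g:Z(C)\to B(C)$ is proper, because $g$ is finite, and locally a Nash isomorphism, because $g$ is \'etale, so its differential is an isomorphism at each point and the semi-algebraic inverse function theorem produces the required local Nash sections.

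Once these translations are in place, Theorem \ref{thom_isotropy3} gives that $f:X(C)\to B(C)$ is locally Nash trivial, and in particular locally trivial as a map of $lsa$-spaces in the sense of Section \ref{fibration_section1}. Theorem \ref{fibration_theorem1} then immediately implies that $f_*:S_*(X)\to S_*(B)$ is a Kan fibration, completing the argument. The main obstacle is the verification of item $(a)$, namely that Zariski-density in algebraic fibres propagates to strong-topology density in the corresponding Nash fibres; the other two items are essentially dictionary translations between algebraic and Nash categories and do not require new input beyond the standard facts recalled in Section \ref{fibration_section2}.
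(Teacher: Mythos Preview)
Your proposal is correct and follows essentially the same route as the paper: equip the $C$-points of the smooth varieties with Nash manifold structures, check that the diagram witnessing the finitely compactifiable nice map becomes a finitely compactifiable surjective submersion in the sense of Definition \ref{4.6}, apply Theorem \ref{thom_isotropy3} to get local Nash triviality, and conclude via Theorem \ref{fibration_theorem1}. The paper compresses the verification of conditions $(a)$--$(c)$ into the single phrase ``by our assumptions and the Nullstellensatz'', whereas you spell each item out; one small point is that your appeal to Proposition \ref{connectivity} for surjectivity of $h$ on $C$-points is misplaced---surjectivity there is exactly the Nullstellensatz (the fibres are nonempty varieties over the algebraically closed field $C$), while Proposition \ref{connectivity} concerns connectedness.
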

\begin{proof} Let $M$ be a smooth algebraic variety over $C$. Then we can equip $M(C)$ with the structure of a Nash manifold over $K$ as follows. There is a finite covering $M=\bigcup_{i\in I}M_i$ by Zariski-open affine subvarieties such that the intersections $M_i\cap M_j$ are also affine. Choose an embedding $\phi_i:M_i\to\mathbb A^{m_i}_C$ for each $i$. Then $\phi_i(M_i)(C)\subseteq C^{m_i}\cong K^{2m_i}$ has the structure of a Nash manifold, as $M_i$ is smooth. Moreover for every $i,j\in I$ the image $\phi_i(M_i\cap M_j)(C)$ is open, and the transition map $\phi_i(M_i\cap M_j)(C)\to\phi_j(M_i\cap M_j)(C)$ is a Nash-diffeomorphism, so we get a Nash atlas on $M(C)$. This structure is independent of the choice of the covering and the embeddings chosen. We get a functor from $\mathcal{VAR}_C$ to the category of Nash manifolds over $K$ such that its composition with the forgetful functor from the category of Nash manifolds over $K$ to $\mathcal{LSA}_K$ is the functor $\mathcal{VAR}_K\to\mathcal{LSA}_K$ in Definition \ref{4.14} above. By our assumptions and the Nullstellensatz $f:X(C)\to B(C)$ is a finitely compactifiable surjective submersion of Nash manifolds over  $K$. We get from Theorem \ref{thom_isotropy3} that this map $f:X(C)\to B(C)$ is locally Nash trivial, and hence the map of underlying $lsa$-spaces $X_{lsa}\to B_{lsa}$ is locally trivial. The claim now follows from Theorem \ref{fibration_theorem1}.
\end{proof}
Since $S_*(X)$ and $S_*(B)$ are fibrant, and the fibre of $f_*$ over any $s_x\in  S_0(X)$ is $S_*(f^{-1}(x))$ we get the following 
\begin{cor}\label{fibres_are_homotopic} Let $f:X\to B$ be as above and assume that $B$ and the fibres of $f$ are connected and quasi-projective. Then for every $x\in X(C)$ the homotopy type of $f^{-1}(x)$ is the homotopy fibre of the map $f_*:S_*(X)\to S_*(B)$. In particular for every $x,y\in X(C)$ the simplicial sets $S_*(f^{-1}(x))$ and $S_*(f^{-1}(y))$ are weakly equivalent.
\end{cor}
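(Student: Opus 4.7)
The plan is to combine Theorem~\ref{fibration-theorem2}, which gives that $f_*\colon S_*(X)\to S_*(B)$ is a Kan fibration, with Lemma~\ref{2.6}, which says both $S_*(X)$ and $S_*(B)$ are fibrant. In this situation standard simplicial homotopy theory identifies the homotopy fibre of $f_*$ over any vertex with its strict fibre, so the bulk of the argument is simply to recognise the strict fibre.

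First I fix $x\in X(C)$ and set $b=f(x)\in B(C)$. Since each fibre of $f$ is quasi-projective by hypothesis, the scheme-theoretic fibre $F:=f^{-1}(b)$ is a quasi-projective variety over $C$, and Definition~\ref{4.14} equips $F$ with an lsa-structure for which the closed immersion $F\hookrightarrow X$ is a morphism in $\mathcal{LSA}_K$. By definition, an $n$-simplex of the strict fibre $f_*^{-1}(s_b)\subset S_*(X)$ is a semi-algebraic map $\sigma\colon\Delta^n_K\to X$ with $f\circ\sigma$ constant at $b$; such a $\sigma$ takes values in $F(C)$, and since $F$ is a locally closed subspace of $X$, it factors uniquely as a morphism $\Delta^n_K\to F$ in $\mathcal{LSA}_K$. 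Hence $f_*^{-1}(s_b)=S_*(F)$ as pointed simplicial sets.

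Because $f_*$ is a Kan fibration between Kan complexes, the strict fibre $S_*(F)$ is already a model for the homotopy fibre of $f_*$ over $s_b$, which proves the first statement. For the second, $B$ is connected and quasi-projective, so by Proposition~\ref{connectivity} the lsa-space $B(C)$ is path-connected, and by Lemma~\ref{2.14}(i) this means $\pi_0(S_*(B))$ is a singleton. For a Kan fibration between Kan complexes whose base is connected, the strict fibres over any two vertices are weakly equivalent, a standard consequence of transporting simplices along a $1$-simplex joining the two basepoints. Applying this with basepoints $s_{f(x)}$ and $s_{f(y)}$ and the identification above yields the final claim.

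The only delicate point is the identification of the strict fibre with $S_*(F)$, which relies on the quasi-projectivity of the fibres to ensure that they inherit an lsa-structure compatible with the inclusion into $X$; everything else is a mechanical application of the Kan-fibration formalism recorded in \cite{GJ}.
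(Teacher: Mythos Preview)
Your proof is correct and follows the same line as the paper's, which is a one-sentence appeal to Theorem~\ref{fibration-theorem2} and Proposition~\ref{connectivity}; you have simply unpacked the standard Kan-fibration facts that the paper leaves implicit (and that are already hinted at in the sentence preceding the corollary, which records that the strict fibre of $f_*$ over $s_b$ is $S_*(f^{-1}(b))$). One small remark: your closing comment that the identification of the strict fibre ``relies on the quasi-projectivity of the fibres'' is slightly off, since Definition~\ref{4.14} equips \emph{any} variety over $C$ with an lsa-structure by covering with affines; the quasi-projectivity hypothesis is really there so that Proposition~\ref{connectivity} applies to $B$ (and, in later uses, so that Theorem~\ref{basic_comparison2} applies to the fibres).
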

\begin{proof} Follows at once from Theorem \ref{fibration-theorem2} and Proposition \ref{connectivity}. 
\end{proof}
Another important consequence is the following
\begin{cor}\label{homotpy_exact_sequence} Let $f:X\to B$ be as above and assume that $B$ and the fibres of $f$ are connected and quasi-projective. Then there is a homotopical long exact sequence:
$$\cdots\to\pi_2(X,x)\to\pi_2(B,f(x))\to\pi_1(f^{-1}(x),x)\to\pi_1(X,x)\to\pi_1(B,f(x))\to1$$
for every $x\in X(C)$.\qed
\end{cor}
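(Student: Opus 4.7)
The plan is to deduce the long exact sequence directly from the standard long exact homotopy sequence of a Kan fibration of pointed simplicial sets, once we have identified all the relevant pieces in semi-algebraic terms.

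First, by Theorem \ref{fibration-theorem2} the induced map $f_*:S_*(X)\to S_*(B)$ is a Kan fibration. By Lemma \ref{2.6} both $S_*(X)$ and $S_*(B)$ are fibrant, so we are in the classical situation in which the fibre sequence yields a long exact sequence of simplicial homotopy groups based at any chosen vertex. Choose the base vertex $s_x\in S_0(X)$; its image in $S_*(B)$ is $s_{f(x)}$. Next I would identify the fibre over $s_{f(x)}$: by construction an $n$-simplex of $f_*^{-1}(s_{f(x)})$ is a continuous semi-algebraic map $\Delta^n_K\to X$ which factors through $f^{-1}(f(x))$, so the fibre is canonically the simplicial set $S_*(f^{-1}(f(x)))$, pointed at $s_x$.

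Applying the standard long exact sequence we obtain
\begin{equation*}
\cdots\to\pi_2(S_*(X),s_x)\to\pi_2(S_*(B),s_{f(x)})\to\pi_1(S_*(f^{-1}(f(x))),s_x)\to\pi_1(S_*(X),s_x)\to\pi_1(S_*(B),s_{f(x)})\to\pi_0(S_*(f^{-1}(f(x))))\to\pi_0(S_*(X))\to\pi_0(S_*(B)).
\end{equation*}
The natural isomorphisms $\kappa_i^{M,m}$ of Lemma \ref{2.14} then translate this into a sequence phrased entirely in terms of the semi-algebraic homotopy groups $\pi_i(X,x)$, $\pi_i(B,f(x))$ and $\pi_i(f^{-1}(f(x)),x)$, matching the statement.

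It remains to justify truncation on the right by $\pi_1(B,f(x))\to 1$. Since by hypothesis $B$ and the fibre $f^{-1}(f(x))$ are connected quasi-projective varieties over $C$, Proposition \ref{connectivity} gives that the semi-algebraic spaces $B(C)$ and $f^{-1}(f(x))(C)$ are connected, so (via the $\kappa_0$ of Lemma \ref{2.14}) $\pi_0(S_*(B))$ and $\pi_0(S_*(f^{-1}(f(x))))$ are singletons. Exactness at $\pi_1(S_*(B),s_{f(x)})$ in the fibration long exact sequence then forces surjectivity of $\pi_1(f^{-1}(f(x)),x)\to\pi_1(X,x)$\dots\ actually exactness there gives that $\pi_1(B,f(x))\to\pi_0(f^{-1}(f(x)))=\ast$, which is automatic; the point is simply that the connectivity of the fibre makes the sequence end with $\pi_1(B,f(x))\to 1$ as claimed. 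No step is genuinely difficult here; essentially all of the content sits in Theorem \ref{fibration-theorem2} and Proposition \ref{connectivity}, and the only thing to check carefully is the identification of the fibre of $f_*$ with $S_*(f^{-1}(f(x)))$, which is immediate from the definitions.
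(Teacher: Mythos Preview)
Your proposal is correct and matches the paper's approach: the corollary is marked with \qed\ in the paper, and the intended argument is exactly the one you give---combine the Kan fibration from Theorem \ref{fibration-theorem2} with the identification of the simplicial fibre as $S_*(f^{-1}(f(x)))$, apply the standard long exact sequence, translate via Lemma \ref{2.14}, and use Proposition \ref{connectivity} for the connectivity of the fibre to terminate at $\pi_1(B,f(x))\to 1$. The momentary slip about surjectivity of $\pi_1(\text{fibre})\to\pi_1(X)$ is harmless since you immediately correct it; the only point needed is that $\pi_0$ of the fibre is a singleton.
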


\section{The homotopy type of curves and abelian varieties}

\begin{defn} Let $(k,<)$ be an ordered field. A Dedekind cut in $k$ is a subset $A\subseteq k$ such that if $x\in A$ and $y<x$, then $y\in A$. When $k$ is a real closed field then the set of Dedekind cuts of $k$ is bijective with the set of orderings on $k(x)$. The correspondence assigns to every ordering $<$ on $k(x)$ the Dedekind cut $A_<=\{y\in k\mid y< x\}$ (see Exercise 24 of Section 2 of \cite{Bour}.)
\end{defn}
\begin{lemma}\label{completion} Let $k$ be a real closed field, and let $K/k,L/k$ be two real closed extensions of $k$. Then there is a real closed extension $M/k$ which contains sub-extensions isomorphic to $K/k$ and $L/k$, respectively. 
\end{lemma}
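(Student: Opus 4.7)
The plan is to construct $M$ by Zorn's lemma on the poset $\mathcal{P}$ of partial amalgamations, a partial amalgamation being a tuple $(N,K_0,L_0,\iota,j)$ consisting of a real closed extension $N$ of $k$, sub-ordered-fields $K_0\subseteq K$ and $L_0\subseteq L$ containing $k$, and order-preserving $k$-embeddings $\iota\colon K_0\hookrightarrow N$ and $j\colon L_0\hookrightarrow N$, ordered by the evident notion of extension. Chains in $\mathcal{P}$ have upper bounds via direct limits of ordered fields followed by real closure, so Zorn's lemma yields a maximal element $(M,K_0,L_0,\iota,j)$. It will then suffice to show $K_0=K$ and $L_0=L$, as $M$ will be the desired extension. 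By symmetry I will focus on proving $K_0=K$.

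Suppose for contradiction that there is some $\alpha\in K\setminus K_0$. When $\alpha$ is algebraic over $K_0$ the situation is straightforward: since $K$ is real closed, $\alpha$ lies in the real closure of $K_0$ inside $K$, and by the uniqueness of the real closure as an ordered-field extension together with the fact that $M$ is real closed, $\iota$ extends to an order-embedding of this real closure into $M$. Enlarging $K_0$ to include $\alpha$ then contradicts maximality.

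The main obstacle will be the transcendental case. Given $\alpha\in K\setminus K_0$ transcendental over $K_0$, let $A=\{x\in K_0\mid x<\alpha\}$ be its Dedekind cut in $K_0$, and define $\widetilde{A}\subseteq M$ to be the downward closure of $\iota(A)$. A direct check using the defining cut property of $A$ will show that $\widetilde{A}$ is itself a Dedekind cut of $M$ and that $\widetilde{A}\cap\iota(K_0)=\iota(A)$. By the correspondence recalled in the preceding definition, $\widetilde{A}$ determines an ordering on $M(t)$ extending that of $M$ in which $t$ realizes the cut $\widetilde{A}$. The assignment $\alpha\mapsto t$ will then produce an order-preserving $k$-algebra embedding $K_0(\alpha)\hookrightarrow M(t)$: the ordering on $K_0(\alpha)$ is determined by the cut $A$ of $\alpha$ in $K_0$, while the induced ordering on $\iota(K_0)(t)\subseteq M(t)$ is determined by the cut $\widetilde{A}\cap\iota(K_0)=\iota(A)$, so they agree. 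Passing to the real closure of $M(t)$ will enlarge the amalgamation and contradict maximality, completing the argument.
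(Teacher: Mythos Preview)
Your approach is correct and shares the same mathematical core as the paper's: the algebraic step via uniqueness of the real closure of an ordered field, and the transcendental step via the Dedekind-cut correspondence with orderings on a simple transcendental extension. The paper packages this as an explicit transfinite induction rather than Zorn's lemma, and it works \emph{asymmetrically}: it fixes $L_0=L$ from the outset and builds a nested chain $L=L_0\subseteq L_1\subseteq\cdots$ of real closed overfields of $L$ into which larger and larger subfields $K_\alpha\subseteq K$ embed. This sidesteps two minor wrinkles in your symmetric Zorn setup --- the need to track a second partial embedding $j$, and the fact that your $\mathcal{P}$ is a priori a proper class with an ordering that depends on a choice of embedding $N\hookrightarrow N'$ --- but these are routine to patch (bound $|N|$ by $|K|\cdot|L|\cdot\aleph_0$, or simply adopt the asymmetric version yourself).
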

\begin{proof} By transfinite induction there is a cardinal $\kappa$ and a chain $\{K_{\alpha}\}_{\alpha\in\kappa}$ of subfields of $K$ indexed by $\kappa$ such that
\begin{enumerate}
\item[$(i)$] we have $K_0=k$,
\item[$(ii)$] for every limit ordinal $\alpha\in\kappa$ we have $K_{\alpha}=\bigcup_{\beta\in\alpha}
K_{\beta}$,
\item[$(iii)$] for every $\alpha\in\kappa$ we have $K_{\alpha+1}=K_{\alpha}(x_{\alpha})$ for some
 $x_{\alpha}\in K_{\alpha}$.
\end{enumerate}
Let $<,\prec$ denote the unique ordering on the real closed fields $K$ and $L$, respectively. For every $\alpha\in\kappa$ let $<_{\alpha}$ denote the restriction of $<$ onto $K_{\alpha}$. By transfinite induction we are going to define a chain $\{L_{\alpha}\}_{\alpha\in\kappa}$ of real closed extensions of $L$ indexed by $\kappa$ and a set $\{i_{\alpha}\}_{\alpha\in\kappa}$ of $k$-embeddings $i_{\alpha}:K_{\alpha}\to L_{\alpha}$ such that
\begin{enumerate}
\item[$(i)$] for every $\beta\in\alpha\in\kappa$ we have $i_{\alpha}|_{K_{\beta}}=i_{\beta}$,
\item[$(ii)$] for every $\alpha\in\kappa$ the map $i_{\alpha}$ is order-preserving with respect to $<_{\alpha}$ and the unique ordering $\prec_{\alpha}$ on $L_{\alpha}$.
\end{enumerate}
This is sufficient to conclude the claim, since the union $M=\bigcup_{\alpha\in\kappa}L_{\alpha}$ will have the required properties. Let $L_0=L$ and let $i_0$ be the inclusion map $k\to L$. For every limit ordinal $\alpha\in\kappa$ we set $L_{\alpha}=\bigcup_{\beta\in\alpha}L_{\beta}$ and $i_{\alpha}=\bigcup_{\beta\in\alpha}i_{\beta}$; since the ordering on $k$ is unique, this map is order-preserving. Since the union of a chain of real closed fields is real closed and the union of order-preserving maps is order-preserving, the pair $(L_{\alpha},i_{\alpha})$ has the required properties.

Next we are going to consider the case of those $\alpha\in\kappa$ such that $\alpha=\beta+1$. Let $K_{\beta}^{\vee},K_{\beta}^{\wedge}$ be the algebraic closure of $K_{\beta}$ and $i_{\beta}(K_{\beta})$ in $K$ and $L_{\beta}$, respectively. Since $K$ and $L_{\beta}$ are real closed, the fields $K_{\beta}^{\vee}$ and $K_{\beta}^{\wedge}$ are the real closures of
$K_{\beta}$ and $i_{\beta}(K_{\beta})$ with respect to the ordering $<_{\beta}$ and the restriction of $\prec_{\beta}$ onto $i_{\beta}(K_{\beta})$, respectively. Because $i_{\beta}$ is order-preserving there is an extension $j_{\beta}:K_{\beta}^{\vee}\to K_{\beta}^{\wedge}$ of $i_{\alpha}$, which is an isomorphism, using the uniqueness of the real closure. 

By assumption there is an $x\in K_{\alpha}$ such that $K_{\alpha}=K_{\beta}(x)$. If $x\in K_{\beta}^{\vee}$ then set $L_{\alpha}=L_{\beta}$ and $i_{\alpha}=j_{\beta}|_{K_{\beta}(x)}$. Clearly $(L_{\alpha},i_{\alpha})$ has the required properties. Otherwise $x$ is transcendental over $K_{\beta}^{\vee}$. Let $<'_{\alpha}$ denote the restriction of $<$ onto $K_{\beta}^{\vee}(x)$ and let $A=\{y\in K_{\beta}^{\vee}\mid y<'_{\alpha}x\}$ be the corresponding Dedekind cut. Consider the set
$$B=\{y\in L_{\beta}\mid 
\exists z\in K_{\beta}^{\vee}\textrm{ such that }y\preceq_{\beta}j_{\beta}(z)\}.$$
Note that $B$ is a Dedekind cut in $L_{\beta}$ and $B\cap K_{\beta}^{\wedge}=
j_{\beta}(K_{\beta}^{\vee})$. Let $\prec'_{\beta}$ be the ordering on the rational function field $L_{\beta}(x')$ corresponding to $B$. By the above the unique embedding $j'_{\beta}:K_{\beta}^{\vee}(x)\to L_{\beta}(x')$ of $i_{\alpha}$ such that $j_{\beta}'(x)=x'$ and $j'_{\beta}|_{K_{\beta}^{\vee}}=j_{\beta}$ is order-preserving with respect to the orderings  $<_{\alpha}$ and $\prec'_{\beta}$. Let $L_{\alpha}$ be the real closure of $L_{\beta}(x')$ with respect to $\prec'_{\beta}$ and let $i_{\alpha}:K_{\alpha}
\to L_{\alpha}$ be the composition of $j'_{\beta}|_{K_{\alpha}}$ and the inclusion $L_{\beta}(x')\subset L_{\alpha}$. Again it is clear that $(L_{\alpha},i_{\alpha})$ satisfies the required properties. 
\end{proof}
\begin{defn} Recall that a smooth, geometrically connected curve $X$ defined over a field has type $(g,d)$ if $g$ is the genus of the smooth projective completion $X^c$ of $X$ and $d$ is the number of geometric points in the complement of $X$ in $X^c$. We define a surface group $\Gamma_{g,d}$ to be the group given the by presentation:
$$\Gamma_{g,0}=\langle a_1,b_1,a_2,b_2,\ldots,a_g,b_g\mid [a_1,b_1]\cdots [a_g,b_g] = 1
\rangle$$
when $d=0$, and the free group on $2g+d-1$ generators, otherwise.
\end{defn}
\begin{prop}\label{curves_fungroup} Let $X$ be a smooth, geometrically connected curve over $C$ of type $(g,d)$ and let $x\in X(C)$. Then the fundamental group $\pi_1(X,x)$ is isomorphic to $\Gamma_{g,d}$.  
\end{prop}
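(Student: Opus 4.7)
The plan is to reduce the computation to the classical topological fundamental group of a smooth complex curve, in three steps.

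\emph{Step 1 (descent).} Since $X$ is cut out by finitely many polynomial equations and $x$ has finitely many coordinates, $(X,x)$ descends to a pair $(X_0, x_0)$ defined over $k_0(\mathbf{i})$, where $k_0\subseteq K$ is a countable real closed subfield obtained as the real closure inside $K$ of a finitely generated formally real subfield of $K$. The curve $X_0$ is still smooth, geometrically connected, of type $(g,d)$. Theorem \ref{basic_comparison2}(i), applied to the real closed extension $k_0\hookrightarrow K$, gives a natural isomorphism $\pi_1(X_0,x_0)\cong\pi_1(X,x)$, so it suffices to compute the left hand side.

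\emph{Step 2 (common extension with $\mathbb{R}$).} By Lemma \ref{completion} with $\mathbb{Q}^{\text{real}}$ as the common base field, there is a real closed field $M$ with embeddings $\alpha\colon k_0\hookrightarrow M$ and $\beta\colon\mathbb{R}\hookrightarrow M$. Theorem \ref{basic_comparison2}(i) applied to $\alpha$ yields $\pi_1(X_0,x_0)\cong\pi_1(X_M,x_M)$, where $X_M := X_0\otimes_{k_0(\mathbf{i}),\alpha}M(\mathbf{i})$ is a smooth geometrically connected curve of type $(g,d)$ over the algebraically closed field $M(\mathbf{i})$. Separately, pick any smooth complex curve $Y$ of type $(g,d)$ with a rational point $y\in Y(\mathbb{C})$, and set $Y_M := Y\otimes_{\mathbb{C},\beta}M(\mathbf{i})$; Theorem \ref{basic_comparison2}(i) applied along $\beta$ gives $\pi_1(Y_M,y_M)\cong\pi_1^{\mathrm{sa}}(Y,y)$, which by Theorem \ref{basic_comparison2}(ii) equals the classical topological fundamental group $\pi_1^{\mathrm{top}}(Y(\mathbb{C}),y)\cong\Gamma_{g,d}$ by the classification of surfaces.

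\emph{Step 3 (moduli).} It remains to compare $\pi_1(X_M,x_M)$ with $\pi_1(Y_M,y_M)$. For this, fix a level-$n$ structure (with $n\geq 3$) and an ordering of the $d$ missing points on each curve; these choices produce lifts of the moduli points of $X_M$ and $Y_M$ to a connected fine moduli scheme $S$ over $M(\mathbf{i})$ parametrising type-$(g,d)$ curves with this extra data. The universal curve $\mathcal{X}\to S$ is a finitely compactifiable nice map (its boundary is a disjoint union of $d$ copies of $S$), and $S(M(\mathbf{i}))$ is connected by Proposition \ref{connectivity}. By Theorem \ref{fibration-theorem2}, the induced $S_*(\mathcal{X})\to S_*(S)$ is a Kan fibration, and Corollary \ref{fibres_are_homotopic} then yields $\pi_1(X_M,x_M)\cong\pi_1(Y_M,y_M)$. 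Combining with Step 2 gives $\pi_1(X,x)\cong\Gamma_{g,d}$.

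The main technical hurdle is Step 3: producing the connected fine moduli scheme $S$ with the universal family realised as a finitely compactifiable nice map. For $g\geq 2$ this is standard via rigid level structures on the moduli of smooth pointed genus-$g$ curves, which make it representable while preserving irreducibility. The cases $g=0$ and $g=1$ require explicit connected parameter spaces (configuration spaces of $d$ points in $\mathbb{P}^1$ modulo $\mathrm{PGL}_2$ for $g=0$; the $j$-line or the universal elliptic curve for $g=1$), or alternatively direct arguments using $\pi_1(\mathbb{P}^1)=1$ and the covering theory of the previous section in combination with Theorem \ref{fibration-theorem2}.
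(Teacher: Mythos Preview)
Your proof is correct and follows essentially the same route as the paper: pass via Lemma \ref{completion} and Theorem \ref{basic_comparison2} to a real closed base containing $\mathbb{R}$, then use a connected moduli family of type-$(g,d)$ curves together with Corollary \ref{fibres_are_homotopic} to compare with a classical complex curve. Your descent in Step~1 is unnecessary (the paper applies Lemma \ref{completion} directly to $K$ and $\mathbb{R}$ over $\mathbb{Q}^{\mathrm{real}}$ and then base-changes $X$ upward) but harmless, and your explicit handling of level structures and the low-genus cases spells out details the paper leaves implicit.
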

\begin{proof} Note that every real closed field is the extension of the unique real closure $\mathbb Q^{real}$ of $\mathbb Q$. Therefore by Lemma \ref{completion} above there is a real closed extension $L$ of $K$ which contains a copy of $\mathbb R$. Since $\pi_1(X_{L(\mathbf i)},x)\cong\pi_1(X,x)$ by part $(i)$ of Theorem \ref{basic_comparison2}, we may assume without the loss of generality that $K$ is an extension of $\mathbb R$. Because of the existence of a smooth connected moduli stack for smooth geometrically irreducible projective curves of genus $g$ pointed with $d$ different points there is an elementary fibration $f:T\to M$ of smooth varieties over $C$ such that $M$ is connected, the geometric fibres of $f$ are curves of type
$(g,d)$ and every such curve over $C$ is isomorphic to one of these fibres.

Let $Y$ be a smooth, geometrically connected curve of type $(g,d)$ defined over the subfield $\mathbb C\subseteq C$ and let $y\in Y(\mathbb C)$. By part $(ii)$ of Theorem \ref{basic_comparison2} we know that $\pi_1(Y,y)\cong\pi_1(S^{top}_*(Y),s_y)$ and since $\pi_1(S^{top}_*(Y),s_y)\cong\Gamma_{g,d}$, we get that the claim holds for $Y$. On the other hand by Corollary \ref{fibres_are_homotopic} we have $\pi_1(X,x)\cong\pi_1(Y,y)$, so the claim holds for $X$, too.
\end{proof}
\begin{prop}\label{curves_k1} Let $X$ be a smooth, geometrically connected curve over $C$ which is not a projective curve of genus zero, and let $x\in X(C)$. Then $S_*(X,s_x)$ is weakly  equivalent to $B\pi_1(X,x)$.
\end{prop}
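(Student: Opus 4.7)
The plan is to show that $S_*(X,s_x)$ is a $K(\pi,1)$-space with $\pi\cong\Gamma_{g,d}$, by reducing to the classical fact that a non-rational or non-projective smooth complex curve has contractible universal cover. Since $\pi_1$ is already computed in Proposition \ref{curves_fungroup}, the only thing left is to prove the vanishing of the higher semi-algebraic homotopy groups of $(X,s_x)$.

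First I would reduce to the case in which $K$ contains the real numbers. Applying Lemma \ref{completion} to $\mathbb Q^{real}\subseteq K$ and to $\mathbb Q^{real}\subseteq\mathbb R$ produces a real closed extension $L/K$ that also contains a copy of $\mathbb R$; then part $(i)$ of Theorem \ref{basic_comparison2} gives a natural weak equivalence $S_*(X,s_x)\to S_*(X_{L(\mathbf i)},s_x)$ in $SSets^0$, so we may replace $K$ by $L$ and assume $\mathbb R\subseteq K$, hence $\mathbb C\subseteq C$.

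Next I would repeat the moduli construction used in Proposition \ref{curves_fungroup}. There exists an elementary fibration (hence a finitely compactifiable nice map) $f:T\to M$ of smooth varieties over $C$ with $M$ connected and quasi-projective, whose geometric fibres are smooth curves of type $(g,d)$, such that every smooth geometrically connected curve of type $(g,d)$ over $C$ is isomorphic to some fibre. In particular $X$ is isomorphic to a fibre $f^{-1}(m)$, and we can choose a curve $Y$ of type $(g,d)$ defined over $\mathbb C\subseteq C$ which is isomorphic to another fibre $f^{-1}(m')$. By Theorem \ref{fibration-theorem2} the map $f_*:S_*(T)\to S_*(M)$ is a Kan fibration between fibrant simplicial sets; by Proposition \ref{connectivity} the base $M(C)$ is connected; and so Corollary \ref{fibres_are_homotopic} yields a weak equivalence $S_*(X)\simeq S_*(Y)$. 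Since both simplicial sets are connected and fibrant, this equivalence can be promoted to a pointed weak equivalence $S_*(X,s_x)\simeq S_*(Y,s_y)$ for any basepoint $y\in Y(\mathbb C)$.

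Now I would invoke part $(ii)$ of Theorem \ref{basic_comparison2}, applied to $Y$ viewed as a $\mathbb C$-variety, to identify $S_*(Y,s_y)$ with the classical pointed singular simplicial set $S^{top}_*(Y(\mathbb C),s_y)$. A smooth connected Riemann surface of type $(g,d)$ with $(g,d)\neq(0,0)$ is classically a $K(\Gamma_{g,d},1)$: its universal cover is $\mathbb C$ (if $(g,d)=(1,0)$ or $(g,d)=(0,1)$, trivially contractible) or the upper half plane (in all other cases, also contractible). Hence $S^{top}_*(Y(\mathbb C),s_y)\simeq B\Gamma_{g,d}$, and combining with Proposition \ref{curves_fungroup} we obtain $S_*(X,s_x)\simeq B\pi_1(X,x)$.

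The only step that requires real care is the middle one: the basepoint-tracking in the passage from Corollary \ref{fibres_are_homotopic} (which on its face provides only an unpointed weak equivalence between different fibres) to a pointed equivalence between $S_*(X,s_x)$ and $S_*(Y,s_y)$. This is however routine once one notes that both simplicial sets are connected Kan complexes, so any weak equivalence can be adjusted by a homotopy to preserve any chosen basepoints; alternatively one may transport the basepoint along a path in the connected total space $S_*(T)$ and use the long exact sequence provided by Corollary \ref{homotpy_exact_sequence} together with the connectedness of $S_*(M)$.
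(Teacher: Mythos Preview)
Your proposal is correct and follows essentially the same approach as the paper: reduce via Lemma \ref{completion} and Theorem \ref{basic_comparison2}(i) to the case $\mathbb R\subseteq K$, use the moduli family and Corollary \ref{fibres_are_homotopic} to compare $X$ with a curve $Y$ defined over $\mathbb C$, and then invoke the classical $K(\pi,1)$ property of Riemann surfaces via Theorem \ref{basic_comparison2}(ii). The paper phrases the conclusion in terms of the vanishing of $\pi_n$ for $n\geq2$ rather than as a weak equivalence of simplicial sets, but this is only cosmetic since $S_*(X)$ is a connected Kan complex. Your extra care about basepoints is a reasonable addition; the paper leaves this implicit. One small elision in your write-up (and also in the paper) is that after obtaining $S_*(X)\simeq S_*(Y_C)$ over $K$ you silently identify $S_*(Y_C)$ with $S_*(Y_{\mathbb C})$ before applying part (ii); this is justified by another application of Theorem \ref{basic_comparison2}(i) using $\mathbb R\subseteq K$.
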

\begin{proof} We may argue as above and conclude that we may assume without the loss of generality that $K$ contains $\mathbb R$. Let $Y$ be again a smooth, geometrically connected curve of type $(g,d)$ defined over the subfield $\mathbb C\subseteq C$ and let $y\in Y(\mathbb C)$. By part $(ii)$ of Theorem \ref{basic_comparison} we know that
$\pi_n(Y,y)\cong\pi_n(S^{top}_*(Y),s_y)$ for every $n\geq2$, and since $\pi_n(S^{top}_*(Y),s_y)\cong 0$ for every such $n$, we get that the claim holds for $Y$. On the other hand we may repeat the proof above to show that $\pi_n(X,x)\cong\pi_n(Y,y)$, using Corollary \ref{fibres_are_homotopic}, so the claim holds for $X$, too. 
\end{proof}
\begin{prop}\label{abelians_k1} Let $X$ be an abelian variety over $C$ and let $x\in X(C)$. Then $\pi_1(X,x)\cong\mathbb Z^{2\dim(X)}$ and $S_*(X,s_x)$ is weakly equivalent to $B\pi_1(X,x)$.
\end{prop}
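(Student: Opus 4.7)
The proof follows the template of Propositions \ref{curves_fungroup} and \ref{curves_k1}. First, by Lemma \ref{completion} I would enlarge $K$ to a real closed field containing $\mathbb{R}$, so that $\mathbb{C} \subset C$; by Theorem \ref{basic_comparison2}(i) this change of base preserves $S_*(X, s_x)$ up to weak equivalence, so we may assume $\mathbb{C} \subset C$.

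Next, I would spread out $X$: since $\mathbb{C} \subset C$, the abelian variety $X$ is defined over some finitely generated $\mathbb{C}$-subalgebra $R$ of $C$, producing (after shrinking to remove singularities and loci where smoothness or projectivity fails) a smooth connected quasi-projective $\mathbb{C}$-variety $B$ together with an abelian scheme $\mathcal{X} \to B$ such that pulling back along the inclusion $\mathrm{Spec}(C) \hookrightarrow B$ recovers $X$. Base-changing to $C$ yields $f \colon \mathcal{X}_C \to B_C$, a smooth projective morphism with connected abelian-variety fibres of dimension $g = \dim X$: the fibre over the distinguished $C$-point is $X$, and for any $\mathbb{C}$-point $b \in B(\mathbb{C})$ the fibre over the corresponding $C$-point is $Y_C$, where $Y = \mathcal{X}_b$ is an abelian variety defined over $\mathbb{C}$.

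Because $f$ is a proper surjective submersion of smooth varieties, the associated map of Nash manifolds is locally Nash trivial by Corollary \ref{thom_isotropy2}, hence locally trivial as a map of $lsa$-spaces; Theorem \ref{fibration_theorem1} then gives that $f_* \colon S_*(\mathcal{X}_C) \to S_*(B_C)$ is a Kan fibration. Since $B_C$ and the fibres are connected (Proposition \ref{connectivity}), Corollary \ref{fibres_are_homotopic} produces a weak equivalence $S_*(X, s_x) \simeq S_*(Y_C, s_{y_C})$. Composing with the weak equivalences from Theorem \ref{basic_comparison2}(i) and (ii), we obtain $S_*(X, s_x) \simeq S^{top}_*(Y, s_y)$ in $SSets^0$. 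The underlying topological space of the complex abelian variety $Y$ is diffeomorphic to the real torus $(\mathbb{R}/\mathbb{Z})^{2g}$, so $\pi_1(Y^{top}, y) \cong \mathbb{Z}^{2g}$ and $\pi_n(Y^{top}, y) = 0$ for $n \geq 2$. Transferring back, $\pi_1(X, x) \cong \mathbb{Z}^{2g}$ and $\pi_n(X, x) = 0$ for $n \geq 2$; since $S_*(X, s_x)$ is fibrant (Lemma \ref{2.6}) and connected (Proposition \ref{connectivity}), standard simplicial homotopy theory gives $S_*(X, s_x) \simeq B\pi_1(X, x) \cong B\mathbb{Z}^{2g}$.

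The main subtlety I expect is that Theorem \ref{fibration-theorem2} is stated for \emph{finitely compactifiable nice maps}, whose definition requires a nonempty \'etale divisor at infinity, whereas the smooth projective family $\mathcal{X}_C \to B_C$ has no such divisor. The resolution is to not invoke Theorem \ref{fibration-theorem2} directly but instead to obtain the Kan fibration conclusion from the combination of Corollary \ref{thom_isotropy2} (Thom isotopy for proper surjective submersions) and Theorem \ref{fibration_theorem1} (locally trivial maps of $lsa$-spaces induce Kan fibrations on singular simplexes), which is exactly the mechanism underlying the proof of Theorem \ref{fibration-theorem2} and applies without change in the projective case.
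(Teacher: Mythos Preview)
Your argument is correct and follows the same overall template as the paper's proof: reduce to the case $\mathbb R\subset K$, place $X$ as a fibre in a smooth projective family that also contains the base change of a complex abelian variety $Y$, use the fibration theorem to identify the homotopy types of the fibres, and then read off the homotopy groups from the classical fact that $Y(\mathbb C)$ is a real torus.

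The one genuine difference is in how the family is produced. The paper invokes the moduli stack of principally polarised abelian varieties of dimension $d$ (using that over an algebraically closed field every abelian variety admits a principal polarisation) to obtain a smooth projective map $T\to M$ over $C$ whose fibres realise all abelian varieties of dimension $d$. You instead spread $X$ out over a finitely generated $\mathbb C$-subalgebra of $C$ to get an abelian scheme $\mathcal X\to B$ over a smooth connected $\mathbb C$-variety and then base-change to $C$. Your route is arguably more self-contained: it sidesteps the passage from a moduli \emph{stack} to an honest smooth variety family (which the paper leaves implicit, presumably via level structure), and it makes the existence of a $\mathbb C$-fibre automatic since $B$ is a $\mathbb C$-variety. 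The paper's route has the minor advantage of giving a single universal family rather than one depending on $X$. Your closing remark about the fibration theorem is also well taken: Corollary \ref{fibres_are_homotopic} is literally stated for finitely compactifiable nice maps, whose definition requires a nonempty divisor at infinity, so the smooth projective case needs exactly the workaround you describe via Corollary \ref{thom_isotropy2} and Theorem \ref{fibration_theorem1}; the paper applies the corollary without comment, so you are being more careful here than the text itself.
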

\begin{proof} The proof of this claim is essentially the same as the proof of two claims above. Again we may assume without the loss of generality that $K$ contains $\mathbb R$.  Because of the existence of a connected smooth moduli stack for principally polarised abelian varieties of dimension $d=\dim(X)$, there is a smooth, projective map $f:T\to M$ of smooth varieties over $C$ such that $M$ is connected, the geometric fibres of $f$ are principally polarised abelian varieties of dimension $d$ and every such variety over $C$ is isomorphic to one of these fibres. Since over an algebraically closed field every abelian variety admits a principal polarisation, we get that in fact every abelian variety of dimension $d$ is the fibre of $f$ over some $C$-valued point of $M$. Let $Y$ be a abelian variety of dimension $d$ defined over the subfield
$\mathbb C\subseteq C$ and let $y\in Y(\mathbb C)$. Then we can deduce that
$$\pi_n(X,x)\cong\pi_n(Y,y)\cong\pi_n(S^{top}_*(Y),s_y)\ (\forall n\in\mathbb N)$$
by arguing as we did above, and the claim immediately follows.
\end{proof}

\section{The Riemann existence theorem}

\begin{defn}\label{6.1} Let $G$ be a group and let $\widehat G$ be its profinite completion. Following Serre (see page 13 of [11]) we will say that $G$ is good if the homomorphism of cohomology groups $H^n(\widehat G, M)\rightarrow H^n(G, M)$ induced by the natural homomorphism $G\rightarrow\widehat G$ is an isomorphism for every finite $G$-module $M$. Examples of good groups include finitely generated free groups, finitely generated free abelian groups, and the surface groups
$\Gamma_{g,0}$. (For an explanation of these well-known facts, see Examples 5.4 and 5.6 of \cite{Pa1}, for example.)
\end{defn}
\begin{lemma}\label{good_group} The group $H$ is good if there is a short exact sequence 
$$\xymatrix{1\ar[r] &N\ar[r] &H\ar[r] &G\ar[r] &1}$$
such that $G$, $N$ are good, $N$ is finitely generated, and the cohomology groups $H^q(N,M)$ are finite for every $q\in\mathbb N$ and every finite $H$-module $M$. In this case the sequence
$$\xymatrix{1\ar[r] &\widehat N\ar[r] &\widehat H\ar[r] &\widehat G\ar[r] &1}$$
is also exact. 
\end{lemma}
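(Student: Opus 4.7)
The plan is to proceed in two linked stages: first establish the exactness of the profinite completion sequence, and then use it to compare the Hochschild--Serre spectral sequences of the discrete and profinite extensions.

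For exactness of $1\to\widehat N\to\widehat H\to\widehat G\to 1$, the surjectivity of $\widehat H\to\widehat G$ is immediate from the surjectivity of $H\to G$ together with the density of $H$ in $\widehat H$. The real content is the injectivity of $\widehat N\to\widehat H$, which amounts to showing that $N$, viewed inside $\widehat H$, inherits its own full profinite topology. Given any finite-index normal $N_0\triangleleft N$, I would first invoke finite generation of $N$: only finitely many subgroups of $N$ share the index $[N:N_0]$, so the intersection $N_1=\bigcap_{h\in H}hN_0h^{-1}$ over the finite orbit of $H$-conjugates is itself a normal finite-index subgroup of $N$ and is, by construction, $H$-invariant. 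Setting $M=N/N_1$, a finite $H$-module, the task becomes producing an $H_0\triangleleft H$ of finite index with $H_0\cap N\subseteq N_1$; equivalently, in the extension $1\to M\to H/N_1\to G\to 1$ one must separate $M$ by finite quotients of $H/N_1$. I would establish this by feeding the finiteness hypothesis on $H^q(N,\cdot)$ into the five-term exact sequence of the smaller extension, producing sufficiently many finite-index normal subgroups of $H/N_1$ to detect each nontrivial element of $M$. Middle exactness then follows at once, since the image of $\widehat N$ in $\widehat H$ coincides with the closure of $N$, which equals $\ker(\widehat H\to\widehat G)$.

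For the goodness of $H$, given any finite $H$-module $M$, I would compare the two Hochschild--Serre spectral sequences
$$E_2^{p,q}=H^p(G,H^q(N,M))\Rightarrow H^{p+q}(H,M),\qquad \widehat E_2^{p,q}=H^p(\widehat G,H^q(\widehat N,M))\Rightarrow H^{p+q}(\widehat H,M),$$
the second of which is available thanks to the profinite exactness just established. Goodness of $N$ gives $H^q(\widehat N,M)\cong H^q(N,M)$; by hypothesis this common group is finite, hence a finite $G$-module; and goodness of $G$ then gives $H^p(\widehat G,H^q(N,M))\cong H^p(G,H^q(N,M))$. The natural comparison of spectral sequences is thus an isomorphism at the $E_2$-page, and since every entry is finite a routine convergence argument upgrades this to an isomorphism on the abutments, proving that $H$ is good.

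The principal obstacle is the reduction step in establishing injectivity of $\widehat N\to\widehat H$: finite generation of $N$ yields the clean reduction to $H$-normal subgroups, but separating $M=N/N_1$ by finite quotients of $H/N_1$ via the cohomology hypothesis must be carried out without implicitly assuming the goodness, or even the residual finiteness, of $H$ --- precisely what we wish to prove. Once this delicate injectivity step is in hand, the remainder is a standard spectral-sequence comparison.
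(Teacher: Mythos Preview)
The paper does not prove this lemma at all: its proof consists entirely of citations to Exercises 2(b) and 2(c) of \S I.2.6 in Serre's \emph{Galois Cohomology}. Your proposal is the standard route through those exercises, so in content you are doing more than the paper, and your two-stage architecture---first exactness of the completed sequence, then comparison of Hochschild--Serre spectral sequences---is exactly right. The spectral-sequence half is correct as written.

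One correction to the injectivity step. After reducing to an $H$-invariant $N_1$ and forming $1\to M\to H/N_1\to G\to 1$ with $M=N/N_1$ finite, the hypothesis doing the work is the goodness of $G$, not the finiteness of $H^q(N,\cdot)$; the five-term sequence is not the natural tool here. One clean argument: the conjugation map $H/N_1\to\mathrm{Aut}(M)$ is a finite quotient that already separates $M\setminus Z(M)$; its kernel $C$ sits in a central extension $1\to Z(M)\to C\to G_0\to 1$ with $G_0$ of finite index in $G$, hence also good. Goodness of $G_0$ gives $H^2(\widehat{G_0},Z(M))\cong H^2(G_0,Z(M))$, so the extension class lifts to a profinite extension $1\to Z(M)\to E\to\widehat{G_0}\to 1$, and the induced map $C\to E$ is injective on $Z(M)$. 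Thus all of $M$ is detected by finite quotients of $H/N_1$. The finiteness of $H^q(N,M)$ enters only in your second stage, exactly where you place it: it guarantees that each $H^q(N,M)$ is a finite $G$-module, so that goodness of $G$ applies to the $E_2$-terms.
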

\begin{proof} The first claim is the content of Exercise 2(c) of [11] on page 16. The second claim is Exercise 2(b) of [11] on page 16. 
\end{proof}
\begin{defn} An elementary fibration over a field $F$ is a morphism of $F$-varieties $f:X\to S$ that can be embedded into a commutative diagram
$$\xymatrix{ X\ar[r]^{j}\ar[rd]_{f} & Y\ar[d]^{h} & Z\ar[l]_{i}\ar[ld]^{g}\\
 & S & }$$
in which:
\begin{enumerate}
\item[$(a)$] the map $j$ is an open immersion, the set $j(X)$ is dense in every fibre of $h$, and $Y=i(Z)
\cup j(X)$;
\item[$(b)$] the map $h$ is smooth and projective, with geometrically irreducible fibres of dimension $1$;
\item[$(c)$] the map $g$ is finite and \'etale, and each fibre of $g$ is nonempty.
\end{enumerate}
\end{defn}
\begin{defn} An elementary neighbourhood over a field $F$ is a smooth, quasi-projective $F$-variety $X$ whose structure map $X\to\textrm{\rm Spec}(F)$ may be factored as a composition of elementary fibrations:
\begin{equation}\label{6.4.1} 
X=X_n\to X_{n-1}\to\cdots\to X_1\to\textrm{\rm Spec}(F).
\end{equation}
In addition we say that $X$ is {\it affine} if all maps in (\ref{6.4.1}) are affine. 
\end{defn}
\begin{prop}\label{elementary1} Let $X$ be an affine elementary neighbourhood over $C$, and let $x\in X(C)$. Then $\pi_1(X,x)$ is good and $S_*(X,s_x)$ is weakly  equivalent to $B\pi_1(X,x)$.
\end{prop}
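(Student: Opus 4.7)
The plan is to argue by induction on the length $n$ of a chain of affine elementary fibrations
\[
X=X_n\to X_{n-1}\to\cdots\to X_1\to\textrm{Spec}(C)
\]
exhibiting $X$ as an affine elementary neighbourhood. The base case is $n=1$: then $X$ is an affine, smooth, geometrically irreducible curve over $C$, so of type $(g,d)$ with $d\geq 1$, and hence by Proposition \ref{curves_fungroup} the fundamental group $\pi_1(X,x)\cong\Gamma_{g,d}$ is a finitely generated free group, which is good (Definition \ref{6.1}). Proposition \ref{curves_k1} applies (since $X$ is not projective), giving the $K(\pi,1)$ statement.

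For the inductive step, write $f\colon X\to X_{n-1}$ for the final affine elementary fibration. Comparing Definitions \ref{6.4.1}-style with Definition \ref{fibration-theorem2}'s hypotheses, every elementary fibration is a finitely compactifiable nice map of smooth varieties over $C$, so Theorem \ref{fibration-theorem2} gives that $S_*(f)\colon S_*(X)\to S_*(X_{n-1})$ is a Kan fibration. Since $f$ is affine, each geometric fibre $F=f^{-1}(f(x))$ is an affine smooth geometrically irreducible curve, so by the base case $\pi_1(F,x)$ is a finitely generated free group and $S_*(F,s_x)\simeq B\pi_1(F,x)$. Proposition \ref{connectivity} guarantees that $X$, $X_{n-1}$ and $F$ are all connected, so Corollary \ref{homotpy_exact_sequence} provides the long exact homotopy sequence. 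By the inductive hypothesis $\pi_k(X_{n-1},f(x))=0$ for $k\geq 2$, and $\pi_k(F,x)=0$ for $k\geq 2$ by the base case, so the long exact sequence forces $\pi_k(X,x)=0$ for $k\geq 2$ and yields a short exact sequence
\[
1\longrightarrow \pi_1(F,x)\longrightarrow\pi_1(X,x)\longrightarrow\pi_1(X_{n-1},f(x))\longrightarrow 1.
\]

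Since $S_*(X)$ is fibrant (Lemma \ref{2.6}), connected and has vanishing higher homotopy groups, it is weakly equivalent to $B\pi_1(X,x)$, which settles the second assertion. For goodness, apply Lemma \ref{good_group} to the above short exact sequence with $N=\pi_1(F,x)$ and $G=\pi_1(X_{n-1},f(x))$: $N$ is a finitely generated free group and hence good, $G$ is good by induction, and for any finite $\pi_1(X,x)$-module $M$ the cohomology $H^q(N,M)$ is trivially finite (bounded by $|M|$ in degree $0$, a quotient of a finite power of $M$ in degree $1$, and zero for $q\geq 2$). Hence $\pi_1(X,x)$ is good, completing the induction.

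I do not expect any serious obstacle here: the work has already been done by Theorem \ref{fibration-theorem2} (Kan fibration), Corollary \ref{homotpy_exact_sequence} (long exact sequence), Propositions \ref{curves_fungroup}--\ref{curves_k1} (base case), and Lemma \ref{good_group} (goodness extensions). The only small points requiring care are (a) verifying that the definition of elementary fibration matches that of a finitely compactifiable nice map, so Theorem \ref{fibration-theorem2} really applies, and (b) checking that affineness of $f$ forces affine fibres, so the base-case results about affine curves can be invoked fibrewise.
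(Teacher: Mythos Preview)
Your argument is correct and follows essentially the same route as the paper: induction on the length of the tower, base case via Propositions \ref{curves_fungroup} and \ref{curves_k1}, inductive step via the long exact sequence of Corollary \ref{homotpy_exact_sequence} to kill higher homotopy and extract the short exact sequence on $\pi_1$, then Lemma \ref{good_group} for goodness. Your two ``small points'' (a) and (b) are indeed routine verifications, and your explicit justification of the finiteness of $H^q(N,M)$ for $N$ free of finite rank is a welcome detail the paper leaves implicit.
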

\begin{proof} We may assume that a sequence of maps as in (\ref{6.4.1}) is given. We are going to show that claim for $X=X_n$ by induction on $n$, that is, the dimension of $X$. Since $X_1$ is a smooth, affine, connected curve, so the claim follows from Propositions \ref{curves_fungroup} and \ref{curves_k1} and the remark at the end of Definition \ref{6.1} in the $n=1$ case. Now suppose that it holds for $X_{n-1}$. Let $f:X\to X_{n-1}$ be the first map in (\ref{6.4.1}), and fix an $x\in X(C)$. Since $f^{-1}(x)$ is a smooth, affine, connected curve, we get that $\pi_k(X_{n-1},f(x))=0$ for every $k\geq2$ by Proposition \ref{curves_k1}. Also $\pi_k(X_{n-1},f(x))=0$ for every $k\geq2$ by the induction hypothesis. So it follows from Corollary \ref{homotpy_exact_sequence} that
$\pi_k(X,f(x))=0$ for every $k\geq2$, and there is a short exact sequence:
$$\xymatrix{1\ar[r] &\pi_1(f^{-1}(x),x)\ar[r] &
\pi_1(X,x)\ar[r] &\pi_1(X_{n-1},f(x))\ar[r] &1.}$$
By the induction hypothesis the group $\pi_1(X_{n-1},f(x))$ is a good group. By Proposition \ref{curves_fungroup} the group
$\pi_1(f^{-1}(x),x)$ is a finitely generated free group, so it is also good. Since for every finitely generated free group $G$ and for every finite $G$-module $M$ the cohomology groups $H^k(G,M)$ are finite for all $k$, we get that $\pi_1(X,x)$ is good by Lemma \ref{good_group}. 
\end{proof}
\begin{notn} For every locally Noetherian scheme $X$ and geometric point $x$ of $X$ let $Et(X)$ denote the Artin-Mazur \'etale homotopy type of $X$, and let $Et(X,x)$ denote the pointed version of the Artin-Mazur \'etale homotopy type of the pair $(X,x)$. Moreover let $\pi_{n}^{et}(X,x)$ be the \'etale fundamental group of $X$ with base point $x$ for every positive integer $n$. For the sake of simple notation we will let $\widehat{\pi}_{1}(X,x)$ denote the profinite completion of
$\pi_{1}(X,x)$ for every pointed $lsa$-space $(X,x)$ over $K$.
\end{notn}
\begin{lemma}\label{good} Let $X$ be a smooth variety over $\mathbb C$, and let $x\in X(\mathbb C)$. Assume that
pointed topological space $(X(\mathbb C),x)$ has the homotopy type of the Eilenberg-MacLane space $B\pi_1(X(\mathbb C),x)$ and the group $\pi_1(X(\mathbb C),x)$ is good. Then $Et(X,x)$ is weakly homotopy equivalent to $B\pi^{et}_1(X,x)$.
\end{lemma}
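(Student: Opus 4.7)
The plan is to reduce Lemma \ref{good} to three classical inputs: the Artin--Mazur comparison theorem applied over $\mathbb{C}$, the defining property of good groups, and the classical Riemann existence theorem for complex varieties.

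First, I would invoke the classical Artin--Mazur comparison theorem to obtain a natural weak equivalence
\[
Et(X, x) \simeq S^{top}_{*}(X(\mathbb{C}), x)^{\wedge}
\]
in $\text{Pro-Ho}(SSets^0)$. Combined with the hypothesis that $(X(\mathbb{C}), x)$ has the homotopy type of $B\pi_1(X(\mathbb{C}), x)$, this reduces the problem to showing that
\[
B\pi_1(X(\mathbb{C}), x)^{\wedge} \simeq B\pi_1^{et}(X, x)
\]
in $\text{Pro-Ho}(SSets^0)$.

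Second, writing $G = \pi_1(X(\mathbb{C}), x)$, I would identify $BG^{\wedge}$ with $B\widehat{G}$, the latter interpreted as the pro-object $\{B(G/N)\}_{N}$ indexed by finite-index normal subgroups $N \triangleleft G$. This is precisely where the goodness hypothesis enters: by Definition \ref{6.1} the natural map $G \to \widehat{G}$ induces isomorphisms on cohomology with arbitrary finite coefficients; consequently the induced map $BG \to \{B(G/N)\}_{N}$ becomes a weak equivalence in the profinitely completed category, since weak equivalences there are detected by pro-cohomology with finite local coefficient systems. Because $BG$ is a $K(G,1)$, it has no higher Postnikov obstructions to contend with, and the only piece of data that needs to be matched is the fundamental-group cohomology.

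Third, by the classical Riemann existence theorem over $\mathbb{C}$ (identifying finite \'etale covers of $X$ with finite topological covers of $X(\mathbb{C})$), we have $\pi_1^{et}(X, x) \cong \widehat{\pi_1(X(\mathbb{C}), x)}$, whence $B\widehat{\pi_1(X(\mathbb{C}), x)} = B\pi_1^{et}(X, x)$. Chaining the three equivalences yields the desired weak equivalence $Et(X, x) \simeq B\pi_1^{et}(X, x)$.

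The main technical obstacle is the second step: justifying carefully that goodness implies $BG^{\wedge} \simeq B\widehat{G}$ in $\text{Pro-Ho}(SSets^0)$, taking base points into account. The key observation is that because $BG$ has only one nontrivial homotopy group, its profinite completion in the sense of Artin--Mazur is determined (up to weak equivalence) by its cohomology with finite local coefficient systems, and goodness is exactly what identifies this cohomology with that of the inverse system $\{B(G/N)\}$. The other two steps are direct invocations of named classical theorems and should be essentially citational.
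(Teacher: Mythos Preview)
The paper does not actually prove this lemma; it simply cites Proposition~5.2 of \cite{Pa2}. Your outline is the standard argument behind that cited result and is essentially correct: Artin--Mazur comparison over $\mathbb{C}$, followed by the identification $(BG)^{\wedge}\simeq B\widehat G$ via goodness, followed by Riemann existence to identify $\widehat G$ with $\pi_1^{et}(X,x)$.

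Two small points worth tightening. First, in step one you are using the Artin--Mazur comparison in the form $Et(X,x)\simeq X_{cl}^{\wedge}$, not merely $\widehat{Et(X,x)}\simeq X_{cl}^{\wedge}$; this requires knowing that $Et(X,x)$ is already profinitely complete, which holds because smooth varieties are normal and hence geometrically unibranch (Artin--Mazur (11.1)). You should make that explicit. Second, in step two your heuristic ``$BG$ has only one nontrivial homotopy group, so its profinite completion is determined by cohomology'' is slightly dangerous as stated: the profinite completion of a $K(G,1)$ can acquire nontrivial higher homotopy groups in general. The clean way to run the argument is to consider the natural map $BG\to\{B(G/N)\}_N$, observe it induces an isomorphism on profinite $\pi_1$ and (by goodness) on cohomology with all finite twisted coefficients, hence is a $\natural$-isomorphism in the sense of Artin--Mazur (4.3), and then pass to profinite completions on both sides; since each $B(G/N)$ already has finite $\pi_1$ and vanishing higher homotopy, the right-hand side is unchanged under completion. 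This is what your last paragraph is gesturing at, and it is the right idea.
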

\begin{proof} This is Proposition 5.2 of \cite{Pa2} on page 831.
\end{proof}
\begin{lemma}\label{notproper} Let $X$ be a smooth geometrically irreducible quasi-projective variety over an algebraically closed field $F$ of characteristic zero, let $x\in X(F)$, and let $L$ be another algebraically closed field containing $L$. Then $Et(X,x)$ is weakly homotopy equivalent to $Et(X_L,x)$.
\end{lemma}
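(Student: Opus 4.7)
The goal is to show that the base change map $Et(X_L,x)\to Et(X,x)$ is a weak equivalence in $\textrm{Pro-}\textrm{Ho}(SSets^0)$. My plan is first to descend the entire situation to a finitely generated base field, then to invoke the classical invariance results for \'etale cohomology and \'etale fundamental group under extension of algebraically closed base fields.

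\textbf{Step 1 (descent).} Since $X$ is of finite type over $F$, choose a finitely generated subfield $F_0\subseteq F$ over which $(X,x)$ descends to some $(X_0,x_0)$, and let $F_1$ denote the algebraic closure of $F_0$ inside $F$. Both $X=(X_0)_F$ and $X_L=(X_0)_L$ are then base changes of $(X_0)_{F_1}$ along algebraically closed extensions $F_1\subseteq F$ and $F_1\subseteq L$, respectively. Hence it suffices to prove that for any algebraically closed extension $E/F_1$ the base-change morphism $Et((X_0)_E,x_0)\to Et((X_0)_{F_1},x_0)$ is a weak equivalence.

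\textbf{Step 2 (reduction to classical invariance).} A morphism in $\textrm{Pro-}\textrm{Ho}(SSets^0)$ is a weak equivalence if and only if it induces isomorphisms on all homotopy pro-groups. For the \'etale homotopy type these pro-groups are profinite and, via Postnikov tower arguments, are determined by the fundamental pro-group together with the \'etale cohomology of the variety and all of its finite \'etale covers with coefficients in finite locally constant sheaves. Hence it is enough to verify:
\begin{enumerate}
\item[$(a)$] the induced map $\pi_1^{et}((X_0)_E,x_0)\to\pi_1^{et}((X_0)_{F_1},x_0)$ is an isomorphism;
\item[$(b)$] for every finite locally constant \'etale sheaf $\mathcal F$ on $(X_0)_{F_1}$ and every $n\geq 0$, the pullback $H^n_{et}((X_0)_{F_1},\mathcal F)\to H^n_{et}((X_0)_E,\mathcal F_E)$ is an isomorphism.
\end{enumerate}
Part $(b)$ is the classical invariance of \'etale cohomology with torsion coefficients under extension of algebraically closed base fields (SGA 4, Expos\'e XVI); part $(a)$ is SGA 1, Expos\'e X, in the proper case.

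\textbf{Step 3 (the main obstacle).} The chief technical difficulty is extending the proper case of $(a)$ to our smooth quasi-projective setting. My approach is to apply Hironaka's resolution of singularities (available in characteristic zero) to embed $X_0$ into a smooth projective compactification $\overline{X_0}$ with simple normal crossings boundary divisor $D_0$, all defined over $F_0$. Then $\pi_1^{et}$ of the open complement is controlled by $\pi_1^{et}(\overline{X_0})$, invariant by the proper case, together with the inertia subgroups at the irreducible components of $D_0$, which are likewise invariant as they already make sense over $F_0$. Combining $(a)$ and $(b)$ then yields the desired weak equivalence.
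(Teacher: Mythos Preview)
The paper does not prove this lemma; it simply cites Proposition~5.4 of \cite{Pa2}. Your outline is the standard argument, and indeed the paper uses exactly your Step~3 strategy elsewhere (in the proof of Lemma~\ref{classical_riemann}) to handle invariance of $\pi_1^{et}$ under change of algebraically closed base.

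Steps~1 and~2 are fine, though in Step~2 you should say explicitly why the $\natural$-criterion of Artin--Mazur~(4.3) yields an honest weak equivalence rather than only one after profinite completion: both $Et((X_0)_{F_1},x_0)$ and $Et((X_0)_E,x_0)$ are already profinite by Artin--Mazur~(11.1), since a smooth variety is geometrically unibranch.

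The soft spot is Step~3. The phrase ``$\pi_1^{et}$ of the open complement is controlled by $\pi_1^{et}(\overline{X_0})$ together with the inertia subgroups'' is not a proof: there is no short exact sequence one can write down relating these three objects that lets you conclude invariance by inspection. What you actually need is the identification, in characteristic zero, of $\pi_1^{et}((X_0)_E,x_0)$ with the \emph{tame} fundamental group $\pi_1^{D_0}((\overline{X_0})_E,x_0)$ relative to the boundary, together with the invariance of the tame fundamental group under extension of algebraically closed base fields (SGA~1, Expos\'e~XIII, via Abhyankar's lemma and the proper base-change for $\pi_1$). This is precisely how the paper argues in the proof of Lemma~\ref{classical_riemann}. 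Once you replace your informal ``controlled by'' with this statement, the argument goes through.
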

\begin{proof} This is Proposition 5.4 of \cite{Pa2} on page 831.
\end{proof}
\begin{prop}\label{elementary2} Let $X$ be an affine elementary neighbourhood over an algebraically closed field $L$ of characteristic zero, and let $x\in X(L)$. Then the pointed \'etale homotopy type $Et(X,x)$ is weakly  equivalent to $B\pi^{et}_1(X,x)$.
\end{prop}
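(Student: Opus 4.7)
The plan is to reduce to the case $L=\mathbb{C}$, where we can combine Proposition \ref{elementary1} (applied over $C=\mathbb{C}$, viewed as the algebraic closure of $K=\mathbb{R}$) with the topological comparison in Theorem \ref{basic_comparison2}(ii) and Lemma \ref{good}. The bridge between the abstract algebraically closed field $L$ and $\mathbb{C}$ is provided by Lemma \ref{notproper}.

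First I would spread $X$ out. Since $X$ is of finite type and the structural data of an affine elementary neighbourhood involves only finitely many morphisms and finitely many equations, there exists a finitely generated subfield $L_{0}\subseteq L$ of characteristic zero over which $X$, the point $x$, and the whole tower of elementary fibrations exhibiting $X$ as an affine elementary neighbourhood are all defined; write $X_{0}/L_{0}$ for such a model, with base point $x_{0}\in X_{0}(L_{0})$. The algebraic closure $\overline{L_{0}}$ is countable, hence admits embeddings into both $L$ and $\mathbb{C}$. Two applications of Lemma \ref{notproper}, once to $\overline{L_{0}}\subseteq L$ and once to $\overline{L_{0}}\subseteq\mathbb{C}$, produce a zig-zag of weak equivalences
$$Et(X,x)\;\simeq\;Et(X_{0}\otimes_{L_{0}}\overline{L_{0}},x_{0})\;\simeq\;Et(X_{0}\otimes_{L_{0}}\mathbb{C},x_{\mathbb{C}}),$$
and the same zig-zag on $\pi_{1}^{et}$ gives $\pi_{1}^{et}(X,x)\cong\pi_{1}^{et}(X_{\mathbb{C}},x_{\mathbb{C}})$, where $X_{\mathbb{C}}:=X_{0}\otimes_{L_{0}}\mathbb{C}$. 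Since an elementary fibration stays an elementary fibration under arbitrary base change of algebraically closed fields of characteristic zero, $X_{\mathbb{C}}$ is itself an affine elementary neighbourhood, now over the algebraic closure $\mathbb{C}=\mathbb{R}(\mathbf{i})$ of the real closed field $\mathbb{R}$.

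Next I would identify the topological homotopy type of $X_{\mathbb{C}}(\mathbb{C})$. By Proposition \ref{elementary1} applied with $C=\mathbb{C}$, the semi-algebraic fundamental group $\pi_{1}(X_{\mathbb{C}},x_{\mathbb{C}})$ is a good group and $S_{*}(X_{\mathbb{C}},s_{x_{\mathbb{C}}})$ is weakly equivalent to $B\pi_{1}(X_{\mathbb{C}},x_{\mathbb{C}})$. Since the base real closed field is $K=\mathbb{R}$, part $(ii)$ of Theorem \ref{basic_comparison2} provides a natural weak equivalence $S_{*}(X_{\mathbb{C}},s_{x_{\mathbb{C}}})\to S_{*}^{top}(X_{\mathbb{C}}(\mathbb{C}),s_{x_{\mathbb{C}}})$, so the classical pointed topological space $(X_{\mathbb{C}}(\mathbb{C}),x_{\mathbb{C}})$ has the homotopy type of $B\pi_{1}^{top}(X_{\mathbb{C}}(\mathbb{C}),x_{\mathbb{C}})$, and the corresponding topological fundamental group is canonically isomorphic to the good group $\pi_{1}(X_{\mathbb{C}},x_{\mathbb{C}})$.

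With these hypotheses Lemma \ref{good} applies directly to $X_{\mathbb{C}}$ and yields a weak equivalence $Et(X_{\mathbb{C}},x_{\mathbb{C}})\simeq B\pi_{1}^{et}(X_{\mathbb{C}},x_{\mathbb{C}})$. Combining this with the descent equivalences of the first paragraph gives $Et(X,x)\simeq B\pi_{1}^{et}(X,x)$, as required. The only real obstacle is the bookkeeping in the reduction step: one must check that a model of the entire tower of elementary fibrations can be chosen over a finitely generated subfield (so that it descends after algebraic closure), and that the base point and the property of being an affine elementary neighbourhood transport faithfully through the two changes of algebraically closed field — but these are immediate from the finite-type nature of the input data and from Lemma \ref{notproper}.
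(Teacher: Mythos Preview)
Your proof is correct and follows essentially the same route as the paper's: spread out to a finitely generated subfield, embed its algebraic closure into $\mathbb{C}$, use Lemma~\ref{notproper} (twice) to transport the \'etale homotopy type, and over $\mathbb{C}$ combine Proposition~\ref{elementary1}, Theorem~\ref{basic_comparison2}$(ii)$, and Lemma~\ref{good}. Your write-up is in fact slightly more careful than the paper's in noting that the entire tower of elementary fibrations, and not just $X$, must descend to the finitely generated field.
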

\begin{proof} There is a subfield $F\subset L$ which is finitely generated over $\mathbb Q$ and $X$ is already defined over $F$, that is, there is an affine elementary neighbourhood $Y$ over $F$ whose base change to $L$ is $X$, and $x$ is an
$F$-valued point in $Y$. Choose an embedding $i:\overline F\rightarrow\mathbb C$ of fields. Then the base change $Y_{\mathbb C}$ of the variety $Y_{\overline F}$ to $\mathbb C$ with respect to this embedding is also an affine elementary neighbourhood. By Proposition \ref{elementary1} and part
$(ii)$ of Theorem \ref{basic_comparison2} the pointed topological space $(Y_{\mathbb C}(\mathbb C),x)$ has the homotopy type of the Eilenberg-MacLane space $B\pi_1(Y_{\mathbb C}(\mathbb C),x)$, and its topological fundamental group is good. Therefore we get from Lemma \ref{good} that $Et(Y_{\mathbb C},x)$ is weakly homotopy equivalent to $B\pi^{et}_1(Y_{\mathbb C},x)$. By a repeated application of Lemma \ref{notproper} we get that $Et(X,x)=Et(Y_K,x)$ is weakly homotopy equivalent to $Et(Y_{\mathbb C},x)$ and hence $\pi^{et}_1(X,x)\cong\pi^{et}_1(Y_{\mathbb C},x)$ and so $Et(X,x)$ is weakly homotopy equivalent to $B\pi^{et}_1(X,x)$.
\end{proof}
We will need the following important result of Artin:
\begin{thm}[Artin]\label{artin_k1} Let $U$ be a smooth algebraic variety over an algebraically closed field $F$ and let $x\in U(F)$. Then there is a Zariski-open neighbourhood $X$ of $x$ which is an affine elementary neighbourhood. 
\end{thm}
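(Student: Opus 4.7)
The plan is to proceed by induction on $n = \dim U$. In the base case $n=1$, shrink $U$ to an affine open $X$ containing $x$ with smooth projective completion $\bar X$; the diagram with $Y = \bar X$, $Z = \bar X \setminus X$ (with reduced structure), $h$ the identity, and $g$ the structure map $Z \to \mathrm{Spec}(F)$ exhibits $X \to \mathrm{Spec}(F)$ as an affine elementary fibration.

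For the inductive step, assume the theorem for all dimensions less than $n$. It suffices to construct a single affine elementary fibration $f : X \to S$ with $X \subseteq U$ a Zariski-open neighbourhood of $x$ and $\dim S = n-1$: the inductive hypothesis applied at $f(x) \in S$ produces an affine elementary neighbourhood $S' \subseteq S$ of $f(x)$, and then $f^{-1}(S')$ is an affine elementary neighbourhood of $x$ in $U$.

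To build the elementary fibration, choose a projective closure $\bar U \subseteq \mathbb P^N_F$ of $U$ and let $Y = \bar U \setminus U$. For a generic linear subspace $L \subseteq \mathbb P^N$ of dimension $N-n$ avoiding $x$, projection from $L$ defines a rational map $\pi$ from $\bar U$ to $\mathbb P^{n-1}$ that is regular at $x$ and has $1$-dimensional generic fibres. By standard Bertini-type arguments on the Grassmannian of such $L$, one can simultaneously ensure that the fibre of $\pi$ through $x$ is a smooth irreducible curve, that $\pi|_U$ is smooth near $x$, and that the restriction of $\pi$ to $Y$ is \'etale near $\pi^{-1}(\pi(x)) \cap Y$. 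After passing to a suitable smooth projective compactification $\bar X$ of an open neighbourhood $X$ of $x$ over a small enough Zariski-open $S \subseteq \mathbb P^{n-1}$ containing $\pi(x)$, one obtains a commutative diagram
\[
\xymatrix{ X \ar[r]^{j} \ar[rd]_{f} & \bar X \ar[d]^{h} & Z \ar[l]_{i} \ar[ld]^{g} \\ & S & }
\]
satisfying the conditions $(a)$--$(c)$ in the definition of an elementary fibration; shrinking $S$ further to an affine open makes $f$ affine, completing the step.

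The main obstacle is to verify that a single generic choice of $L$, combined with subsequent shrinking of $S$, simultaneously fulfills all the requirements: geometric irreducibility of the curve fibres of $h$, smoothness of $h$ with projective fibres, \'etaleness of $g$ with nonempty fibres, and the density/covering condition in $(a)$. Each separate requirement corresponds to a proper closed condition on the Grassmannian of linear subspaces or on $S$, and the technical heart of the argument---carried out in detail in Artin's original proof in SGA 4, Expos\'e XI, \S3.3---is to check that the union of these bad conditions is still proper, so that a generic projection really works.
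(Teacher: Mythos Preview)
Your argument is correct and parallels the paper's: both proceed by induction on dimension and both defer the hard work --- the construction of a single elementary fibration through $x$ --- to Artin's original proof in SGA4, XI.3.3. The organizational difference is that the paper first quotes SGA4 as a black box to obtain an elementary neighbourhood (the full tower) and then runs a separate short induction to make each map in the tower affine, using a multisection trick: one shrinks the base so that the top fibration acquires a multisection avoiding $x$, and then removes it to force the map to be affine. You instead sketch the linear-projection/Bertini construction directly and fold affineness into the same inductive step. One small correction: your sentence ``shrinking $S$ further to an affine open makes $f$ affine'' is not the right justification, since whether $f$ is an affine \emph{morphism} is unaffected by whether $S$ is an affine scheme. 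What actually makes $f$ affine is that $Z$ is a relatively ample Cartier divisor in $\bar X$ over $S$ (it has positive degree on each curve fibre), so $X=\bar X\setminus Z$ is affine over $S$; with that fix your argument goes through, and indeed this observation shows that both your step and the paper's multisection trick are slightly redundant, since condition~(c) in the definition already forces $f$ to be affine.
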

\begin{proof} By Artin's classical result in SGA4 (see XI.3.3 of \cite{AGV}) there is a Zariski-open neighbourhood $X$ of $x$ which is an elementary neighbourhood. We only need to add that we may actually choose $X$ to be an affine elementary neighbourhood. We are going to prove the latter by induction on the dimension $n$ of $X$. The $n=1$ case is trivial. Now let $f:X\to X_{n-1}$ be an elementary fibration. By shrinking $X_{n-1}$ around $f(x)$, if it is necessary, we may assume that $f$ has a multisection whose image does not contain $x$. Replacing $X$ with the complement of the image of this multisection we reduce to the case when $f$ is affine. By the induction hypothesis there is a Zariski-open neighbourhood $X'_{n-1}$ of $f(x)$ in $X_{n-1}$ which is an affine elementary neighbourhood. By replacing
$f$ with the restriction of $f$ onto $f^{-1}(X'_{n-1})$ we may assume without the loss of generality that $X_{n-1}$ is an affine elementary neighbourhood. In this $X$ is also an affine elementary neighbourhood.
\end{proof}
\begin{notn} Let $X$ be a smooth variety over $C$. Let
$\mathcal C_X$ denote the category of finite \'etale covers of $X$, and let $\mathcal D_X$ denote the category of finite covers of $X_{lsa}$. Since for every finite \'etale cover $Y\to X$ the corresponding map $Y_{lsa}\to X_{lsa}$ of the associated $lsa$-spaces is locally trivial by Theorem \ref{thom_isotropy2}, we get a functor $\Psi_X:\mathcal C_X
\to\mathcal D_X$ which associates to every finite \'etale cover $Y\to X$ the finite cover $Y_{lsa}\to X_{lsa}$.
\end{notn}
The analogue of the classical Riemann existence theorem in our setting is the following result:
\begin{thm}\label{riemann_existence} Let $X$ be a smooth variety over $C$. The functor $\Psi_X:\mathcal C_X\to\mathcal D_X$ is an equivalence of categories.
\end{thm}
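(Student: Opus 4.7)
The plan is to identify both sides with categories of finite sets with group action, reduce to affine elementary neighbourhoods via Artin's theorem, and invoke the comparison results of the previous section.

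First I would reduce to the case when $X$ is geometrically connected, since both categories and the functor $\Psi_X$ are compatible with finite disjoint unions, and by Proposition \ref{connectivity} the connected components of $X$ and $X_{lsa}$ agree. For a connected $X$ and a chosen basepoint $x \in X(C)$, the semi-algebraic covering theory recalled in Section~5 (following Delfs--Knebusch) gives an equivalence between $\mathcal{D}_X$ and the category of finite $\pi_1(X_{lsa},x)$-sets, i.e.\ finite continuous $\widehat{\pi_1(X_{lsa},x)}$-sets. On the étale side, $\mathcal{C}_X$ is equivalent to the category of finite continuous $\pi^{et}_1(X,x)$-sets. Monodromy of a finite étale cover and of its associated semi-algebraic cover are computed by the same transport of fibre data along a loop, so $\Psi_X$ corresponds to restriction along a natural homomorphism $\pi_1(X_{lsa},x)\to\pi^{et}_1(X,x)$. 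Thus $\Psi_X$ is an equivalence if and only if this homomorphism induces an isomorphism $\widehat{\pi_1(X_{lsa},x)}\xrightarrow{\cong}\pi^{et}_1(X,x)$.

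I would next settle this isomorphism when $X$ is an affine elementary neighbourhood. By Proposition \ref{elementary1} the group $\pi_1(X_{lsa},x)$ is good and $S_*(X,s_x)\simeq B\pi_1(X_{lsa},x)$, while Proposition \ref{elementary2} gives $Et(X,x)\simeq B\pi^{et}_1(X,x)$. Choose a subfield $F\subset C$ finitely generated over $\mathbb{Q}$ over which $X$ and $x$ are defined, and an embedding $\bar F\hookrightarrow\mathbb{C}$. Lemma \ref{notproper} yields $\pi^{et}_1(X,x)\cong\pi^{et}_1(X_{\mathbb{C}},x)$, the classical Riemann existence theorem over $\mathbb{C}$ identifies the latter with $\widehat{\pi_1^{top}(X(\mathbb{C}),x)}$, and Theorem \ref{basic_comparison2} (both parts, used successively to pass through $\mathbb{R}$) identifies $\pi_1^{top}(X(\mathbb{C}),x)$ with $\pi_1(X_{lsa},x)$. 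Composing these gives the desired comparison isomorphism, so $\Psi_X$ is an equivalence in this case.

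For a general smooth variety $X$ over $C$, I would invoke Artin's Theorem \ref{artin_k1} to obtain a Zariski cover $\{U_\alpha\}$ of $X$ by affine elementary neighbourhoods, and deduce the general case by descent: both categories $\mathcal{C}_{(\cdot)}$ and $\mathcal{D}_{(\cdot)}$ are stacks in the (admissible) Zariski topology of smooth $C$-varieties and $\Psi$ is a morphism between them. An equivalence on each $U_\alpha$ gives gluing data that descends, provided $\Psi$ is also fully faithful on all pairwise intersections $U_\alpha\cap U_\beta$ and triple intersections. These intersections are open in affine elementary neighbourhoods but need not themselves be such, so here I would iterate Artin's theorem on the intersections (or, equivalently, perform a double induction on dimension and on the number of charts) to reduce each intersection again to the affine elementary neighbourhood case.

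The main obstacle is the descent step: the class of affine elementary neighbourhoods is not closed under intersection or under passage to Zariski opens, so checking the cocycle condition for the glued étale cover requires verifying full faithfulness of $\Psi$ on intersections. I would handle this by combining noetherian induction on dimension with the observation that, once fully faithfulness is known on a sufficiently fine refinement of $U_\alpha\cap U_\beta$ by affine elementary neighbourhoods, the cocycle condition for the semi-algebraic gluing automatically lifts to the étale one, because on each piece of the refinement morphisms in $\mathcal{C}$ and $\mathcal{D}$ are in bijection.
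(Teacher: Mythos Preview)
Your reformulation in terms of the comparison map $\psi_X:\widehat{\pi_1(X_{lsa},x)}\to\pi_1^{et}(X,x)$ matches the paper's Theorem \ref{riemann_existence2}, and the localization strategy is sound in outline. However, the core step for affine elementary neighbourhoods has a genuine gap.

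The problematic claim is that Theorem \ref{basic_comparison2} identifies $\pi_1^{top}(X_{\mathbb C}(\mathbb C),x)$ with $\pi_1(X_{lsa},x)$. Writing $X=Y_C$ with $Y$ defined over $F$, your $X_{\mathbb C}$ is $Y\otimes_F\mathbb C$ via an abstract embedding $\bar F\hookrightarrow\mathbb C$, while $X_{lsa}$ comes from $Y\otimes_F C$ via the inclusion $F\subset C$. Theorem \ref{basic_comparison2} only compares semi-algebraic homotopy types along an extension of real closed fields. Even after invoking Lemma \ref{completion} to obtain a common real closed extension $L$ of $K$ and $\mathbb R$, the two resulting varieties $Y\otimes_F L(\mathbf i)$ arise from two unrelated embeddings $F\hookrightarrow L(\mathbf i)$ and are in general not isomorphic over $L(\mathbf i)$ (think of an elliptic curve whose $j$-invariant is sent to different elements). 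So no bridge is available here. A related problem: even granting an abstract isomorphism at each step, you never check that the composite is the specific map $\psi_X$; an equivalence of categories requires the actual comparison map to be an isomorphism, not merely its source and target.

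The paper takes a different route that avoids both issues. Full faithfulness is proved once and for all, for arbitrary smooth $X$, via connected components (Lemma \ref{faithful} and Proposition \ref{connectivity}); this dissolves your worries about intersections of charts, since only essential surjectivity then needs to be Zariski-localized. For essential surjectivity the comparison with $\mathbb C$ is invoked only for varieties already defined over $\mathbb Q^{real}(\mathbf i)$ (Lemma \ref{classical_riemann}), where the embedding ambiguity disappears; in particular this covers the configuration spaces $A[n]$. A five-lemma argument on short exact sequences of fundamental groups then yields affine rational curves (Lemma \ref{curve_riemann1}), general curves follow locally (Lemma \ref{curve_riemann2}), and a second five-lemma induction on dimension handles affine elementary neighbourhoods. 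Because these diagrams are built from the comparison maps $\psi$ themselves, the output is that $\psi_X$ is an isomorphism, not just that one exists.
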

This theorem is proved for all schemes of finite type over $C$ in Huber's thesis (see Satz 12.12 on page 184 of \cite{Hu}). Our proof, which only works in the smooth case, is very different, and does not use coherent sheaves, nor GAGA-type theorems. Before we start proving this theorem we will reformulate it in an equivalent form. Note that we only need to show the claim in the case when $X$ is connected. Assume that this is the case, let $x\in X(C)$, and let $F:\mathcal D_X\to Sets$ be the functor which associates to every finite cover
$Y$ of $X_{lsa}$ the set of points of $Y$ over $x$. This functor is pro-representable by a pro-object $\mathbf U(X,x)$ which is unique up to a unique isomorphism. Since every finite cover of $X_{lsa}$ is a quotient of the universal cover by a subgroup of finite index, the profinite automorphism group of $\mathbf U(X,x)$ is just profinite completion $\widehat{\pi}_{1}(X,x)$ of $\pi_{1}(X,x)$. Therefore we have a natural homomorphism:
$$\psi_X:\widehat\pi_{1}(X,x)\to\pi_{1}^{et}(X,x)$$
of profinite groups. By the above Theorem \ref{riemann_existence} is equivalent to the following
\begin{thm}\label{riemann_existence2} Let $X$ be a smooth connected variety over $C$, and let $x\in X(C)$. Then
$\psi_X:\widehat\pi_{1}(X,x)\to\pi_{1}^{et}(X,x)$ is an isomorphism.\qed
\end{thm}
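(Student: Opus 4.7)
The plan is a three-stage argument. I will first establish the theorem for smooth affine curves, then extend it to affine elementary neighborhoods by induction along the tower of elementary fibrations, and finally descend to arbitrary smooth varieties using Artin's theorem on the local existence of affine elementary neighborhoods.

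For a smooth affine curve $X$ over $C$ of type $(g,d)$ with $d\geq 1$, Proposition \ref{curves_fungroup} gives $\pi_1(X,x) \cong \Gamma_{g,d}$, a free group of rank $2g+d-1$. Spreading $X$ out to a curve $Y$ defined over a subfield $F \subset C$ finitely generated over $\mathbb{Q}$, choosing an embedding $\overline{F} \hookrightarrow \mathbb{C}$, and applying Lemma \ref{notproper} yields $\pi_1^{et}(X,x) \cong \pi_1^{et}(Y_{\mathbb{C}},y)$; the classical Riemann existence theorem over $\mathbb{C}$ identifies the latter with $\widehat{\pi_1^{top}(Y_{\mathbb{C}}(\mathbb{C}),y)} \cong \widehat{\Gamma}_{g,d}$. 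Thus both source and target of $\psi_X$ are free profinite groups of rank $2g+d-1$. Moreover $\psi_X$ is surjective: any finite étale Galois cover of $X$ has an underlying lsa-Galois cover with the same deck group (Theorem \ref{thom_isotropy2}), so every finite quotient of $\pi_1^{et}(X,x)$ is hit from $\widehat{\pi_1(X,x)}$. A surjective homomorphism between two free profinite groups of the same finite rank is an isomorphism by the Hopfian property of such groups, so $\psi_X$ is an isomorphism.

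For the inductive step, let $X = X_n$ be an affine elementary neighborhood with tower $X_n \to X_{n-1} \to \cdots \to \mathrm{Spec}(C)$ and write $f : X \to X_{n-1}$ for the last fibration; its geometric fiber $F$ is a smooth affine curve with free fundamental group. Theorem \ref{fibration-theorem2} combined with Corollary \ref{homotpy_exact_sequence} yields
$$ 1 \to \pi_1(F,x) \to \pi_1(X,x) \to \pi_1(X_{n-1}, f(x)) \to 1, $$
and the classical étale homotopy exact sequence for elementary fibrations (SGA~1, Exp.~XIII) gives the analogous étale sequence. Because $\pi_1(F,x)$ is free of finite rank, it is good with finite cohomology in every finite module; Lemma \ref{good_group} then implies that profinite completion preserves exactness of the first sequence. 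The homomorphisms $\psi$ assemble into a morphism between the two short exact sequences; the outer columns are isomorphisms by the base case and the induction hypothesis, so the five lemma gives the isomorphism $\psi_X$.

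For a general smooth connected variety $X$ over $C$ I pass to the equivalent categorical reformulation \ref{riemann_existence}. By Theorem \ref{artin_k1} every point of $X$ has a Zariski-open affine elementary neighborhood; pick a finite Zariski cover $\{U_i\}$ of $X$ by such neighborhoods. By iterated application of Artin's theorem to intersections one may refine so that all pairwise and triple intersections are themselves covered by affine elementary neighborhoods. Finite étale covers satisfy Zariski descent by classical Grothendieck theory, and finite lsa-covers satisfy descent along admissible Zariski coverings by the gluing results of Delfs and Knebusch in \cite{DeKn1}. Since $\Psi_U$ is an equivalence for each affine elementary neighborhood $U$ in the refined cover (by the first two stages) and $\Psi$ is compatible with restriction to open subvarieties, gluing yields the equivalence $\Psi_X$, which is the desired conclusion. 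The main obstacle is the lsa-descent step: verifying that finite covers of $X_{lsa}$ glue along admissible Zariski coverings in a way compatible with étale Grothendieck descent requires a careful unpacking of the sheaf-theoretic foundations of \cite{DeKn1}. An alternative route would be a semi-algebraic van Kampen theorem combined with Grothendieck's étale van Kampen, but either way this gluing content is the crucial technical ingredient.
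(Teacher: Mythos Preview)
Your three-stage architecture (curves $\to$ affine elementary neighbourhoods $\to$ general via Artin's theorem and Zariski localisation) matches the paper's, and your inductive step for elementary neighbourhoods is essentially identical to the paper's. The treatment of curves, however, is genuinely different. The paper first proves a base-change lemma (Lemma~\ref{classical_riemann}) showing that $\psi_{X_C}$ is an isomorphism whenever $X$ is already defined over $\mathbb Q^{real}(\mathbf i)$, applies it to the configuration spaces $A[n]$, and then extracts the affine rational curve case from the fibre sequence $X\hookrightarrow A[n+1]_C\to A[n]_C$ by comparing the two short exact sequences of fundamental groups (Lemma~\ref{curve_riemann1}); general curves are finally reduced to finite \'etale covers of affine rational ones via a Bertini--Riemann--Roch argument (Lemma~\ref{curve_riemann2}). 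Your route---identify both sides abstractly with $\widehat\Gamma_{g,d}$, note that $\psi_X$ is surjective because connected \'etale covers stay connected as $lsa$-covers (Proposition~\ref{connectivity}), and conclude by the Hopfian property of topologically finitely generated profinite groups---is shorter and bypasses the configuration-space detour entirely. The paper's approach, on the other hand, yields the auxiliary Lemma~\ref{classical_riemann} as a by-product and avoids any appeal to structural facts about profinite groups outside the goodness framework of Lemma~\ref{good_group}.

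On your flagged ``main obstacle'': the paper dispatches this cleanly and early. Full faithfulness of $\Psi_X$ (Lemma~\ref{faithful}) is proved directly from Proposition~\ref{connectivity}---connected components of $Y\times_XZ$ match those of $Y_{lsa}\times_{X_{lsa}}Z_{lsa}$---with no need for Delfs--Knebusch gluing machinery. Once $\Psi$ is fully faithful, essential surjectivity is Zariski-local by the standard argument: local algebraisations of an $lsa$-cover glue along overlaps because the required isomorphisms between them are unique and algebraic by Lemma~\ref{faithful}, and the resulting descent datum for affine \'etale schemes is effective. So you do not need a semi-algebraic van Kampen theorem or separate $lsa$-descent input; the connectivity proposition already does the work.
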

Now we can start the
\begin{proof}[Proof of Theorem \ref{riemann_existence}] Let $Y$ and $Z$ be two finite \'etale covers of $X$.
\begin{lemma}\label{faithful} The canonical map:
\begin{equation}\label{3.2.1b}
\mathrm{Hom} _{\mathcal C_X}(Y,Z) \to
\mathrm{Hom} _{\mathcal D_X}(Y_{lsa},Z_{lsa})
\end{equation}
furnished by $\Psi_X$ is bijective.
\end{lemma}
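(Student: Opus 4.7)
The plan is to realise both hom-sets as sets of clopen ``graphs'' inside the fibre product $W := Y \times_X Z$, to transport these clopen pieces between the algebraic and $lsa$ worlds using that $\Psi_X$ commutes with fibre products of finite \'etale covers, and then to check that the condition singling out a graph is preserved and reflected by $\Psi_X$.

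First, since $Z \to X$ is finite \'etale, so is its base change $W \to Y$, and a morphism $f\colon Y \to Z$ over $X$ is the same data as its graph section $(1_Y, f)\colon Y \hookrightarrow W$, whose image is a clopen subscheme of $W$ projecting isomorphically to $Y$. Every clopen subscheme of $W$ whose projection to $Y$ is an isomorphism arises uniquely this way, so $\mathrm{Hom}_{\mathcal{C}_X}(Y, Z)$ is in natural bijection with the set of such clopen subschemes. The same argument on the $lsa$ side, using that $W_{lsa} \to Y_{lsa}$ is a finite (hence in particular locally trivial) cover of $lsa$-spaces, identifies $\mathrm{Hom}_{\mathcal{D}_X}(Y_{lsa}, Z_{lsa})$ with the set of clopen subspaces of $Y_{lsa} \times_{X_{lsa}} Z_{lsa}$ projecting isomorphically onto $Y_{lsa}$. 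The identification $W_{lsa} = Y_{lsa} \times_{X_{lsa}} Z_{lsa}$ is a direct check from the construction of $M \mapsto M_{lsa}$ on affine patches in Definition \ref{4.14}, so matters reduce to comparing clopen subobjects of $W$ and of $W_{lsa}$.

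Because $W \to X$ is \'etale and $X$ is smooth, $W$ is a smooth variety, hence the disjoint union of its finitely many Zariski-connected components $W_1, \ldots, W_r$, and its clopen subschemes are exactly unions of these. By Proposition \ref{connectivity} each $(W_i)_{lsa}$ is connected; they are pairwise disjoint open subspaces of $W_{lsa}$ covering it, so they are exactly the connected components of $W_{lsa}$, and the clopen subobjects on the two sides therefore match bijectively under $\Psi_X$. Finally, for a union $W' = \bigsqcup_{i \in I} W_i$, the projection $W' \to Y$ is an isomorphism in $\mathcal{C}_X$ iff its degree is $1$ on every geometric fibre, and the same fibre-cardinality criterion governs when $W'_{lsa} \to Y_{lsa}$ is an isomorphism in $\mathcal{D}_X$; hence the ``is a graph'' condition is preserved and reflected by $\Psi_X$, and the desired bijection (\ref{3.2.1b}) follows.

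The main obstacle, and essentially the only nontrivial ingredient, is the step comparing Zariski-connected components of $W$ with connected components of $W_{lsa}$: this rests on Proposition \ref{connectivity}, which is formulated for quasi-projective varieties, so a small reduction may be needed when $X$ is only assumed smooth, by passing to an affine Zariski open cover (the formation of clopen subschemes of $W$ and of connected components of $W_{lsa}$ being local on the base $X$). Everything else is a formal bookkeeping consequence of $\Psi_X$ respecting fibre products and of the well-known description of finite \'etale covers of a smooth variety in terms of their geometric fibres.
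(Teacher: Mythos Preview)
Your proof is correct and follows essentially the same route as the paper: identify $X$-morphisms $Y\to Z$ with connected components (equivalently, clopen pieces) of $Y\times_XZ$ mapping isomorphically to $Y$, use Proposition~\ref{connectivity} to match algebraic and semi-algebraic connected components, and observe that the isomorphism condition on the projection is detected on both sides. The paper makes the minor simplification of first reducing to $Y$ connected, so that a graph is a single connected component rather than a union; your more careful remark about the quasi-projectivity hypothesis in Proposition~\ref{connectivity} is a point the paper glosses over.
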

\begin{proof} We may assume without the loss of generality that $Y$ is connected. Giving an $X$-morphism of $Y$ into $Z$ is equivalent to giving a connected component $W$ of $Y\times_{X}Z$ such that the morphism $W\to Y$ induced by the first projection is an isomorphism. As the connected components of $Y\times_{X}Z$ correspond bijectively to the connected components of $Y_{lsa}\times_{X_{lsa}}Z_{lsa}$ by Proposition \ref{connectivity}, and the morphism $W\to Y$ is an isomorphism if and only if the same holds for $W_{lsa}\to Y_{lsa}$, the bijectivity of the map in (\ref{3.2.1b}) follows.
\end{proof}
Next we are going to prove that the functor $\Psi_X$ is  essentially surjective. First note that the claim is local for the Zariski topology. Indeed let $f:\mathcal Y\to X_{lsa}$ be a finite cover; we need to prove that there is a finite \'etale cover $Y$ of $X$ such that there is an isomorphism $Y_{lsa}\cong\mathcal Y$ in $\mathcal D_X$. Assume that there is a cover $\{X(i)\mid i\in I\}$ of $X$ by open subschemes such that for every $i\in I$ there is a finite \'etale cover $Y(i)$ of $X$ such that there is an isomorphism $Y(i)_{lsa}\cong f^{-1}(X(i)_{lsa})$. By Lemma \ref{faithful} these $Y(i)$ patch together to a descent data over $X$, which is effective, since the maps $Y(i)\to X$ are affine. The corresponding scheme is the $Y$ sought for. Now we are going to show that $\Psi_X$ is essentially surjective in various special cases. 
\begin{lemma}\label{classical_riemann} Let $X$ be a connected variety defined over $\mathbb Q^{real}(\mathbf i)$. Then the map $\psi_{X_C}:\widehat\pi_{1}(X_C,x)\to\pi_{1}^{et}(X_C,x)$ is an isomorphism for every $x\in X(C)$.
\end{lemma}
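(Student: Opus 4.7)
The plan is to reduce the claim to the classical Riemann existence theorem over $\mathbb{C}$ by exploiting the base-change invariance of both sides of $\psi$ under extension of the algebraically closed base field. Since $\mathbb{Q}^{real}$ has a unique ordering it embeds into both $K$ and $\mathbb{R}$, so by Lemma \ref{completion} (applied to $k=\mathbb{Q}^{real}$) there is a real closed field $M$ containing isomorphic copies of both $K/\mathbb{Q}^{real}$ and $\mathbb{R}/\mathbb{Q}^{real}$. Fix such embeddings, so that $M(\mathbf{i})$ is a common algebraically closed overfield of $C=K(\mathbf{i})$ and of $\mathbb{C}$, and the base changes $X_C$, $X_\mathbb{C}$, $X_{M(\mathbf{i})}$ of the $\mathbb{Q}^{real}(\mathbf{i})$-variety $X$ fit into a coherent base-change diagram together with the point $x$.

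Next I would apply Theorem \ref{basic_comparison2}(i) to each of the real closed extensions $M/K$ and $M/\mathbb{R}$; this yields weak equivalences
$$S_*(X_C, s_x) \xrightarrow{\ \sim\ } S_*(X_{M(\mathbf{i})}, s_x) \xleftarrow{\ \sim\ } S_*(X_\mathbb{C}, s_x),$$
and in particular isomorphisms on $\pi_1$ and hence on its profinite completion. On the \'etale side, the invariance of the \'etale fundamental group under extension of algebraically closed base fields in characteristic zero (the $\pi_1$-consequence of Lemma \ref{notproper}) gives isomorphisms $\pi^{et}_1(X_C,x)\cong \pi^{et}_1(X_{M(\mathbf{i})},x)\cong \pi^{et}_1(X_\mathbb{C},x)$. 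By naturality of the construction $\psi$ under base change these identifications assemble into commutative squares with $\psi$ as horizontal arrows, so it suffices to prove that $\psi_{X_\mathbb{C}}$ is an isomorphism.

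For $X$ defined over $\mathbb{C}$ the statement is essentially classical: Theorem \ref{basic_comparison2}(ii) identifies the semi-algebraic fundamental group $\pi_1(X_\mathbb{C},x)$ with the ordinary topological fundamental group $\pi_1(S^{top}_*(X(\mathbb{C})),s_x)$, and the classical Riemann existence theorem of SGA 1 identifies the profinite completion of the latter with $\pi^{et}_1(X_\mathbb{C},x)$; under these identifications $\psi_{X_\mathbb{C}}$ coincides with the classical comparison, which is an isomorphism. The main obstacle in this plan is the naturality verification: one must show that the pro-object $\mathbf U(X,x)$ representing the fibre functor on finite covers, together with its profinite automorphism group, is compatible with base change of the real closed field in the same way that the \'etale fundamental group is. This should follow from the fact that the extension functor $e_{M/K}$ preserves finite locally trivial covers and, by Proposition \ref{connectivity} combined with Theorem \ref{basic_comparison2}(i), preserves their connected components — but making this compatibility precise at the level of pro-automorphism groups is the technical point that will require most of the work.
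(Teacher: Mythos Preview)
Your approach is essentially the same as the paper's: use Lemma \ref{completion} to pass to a common real closed overfield, invoke base-change invariance on both the semi-algebraic side (Theorem \ref{basic_comparison2}) and the \'etale side, and reduce to the classical Riemann existence theorem over $\mathbb{C}$. The only differences are cosmetic---the paper does a two-step reduction (first to an $L\supset\mathbb{R}$, then down to $\mathbb{R}$) rather than your single overfield $M$, and for \'etale $\pi_1$-invariance it argues directly via an NCD compactification and the tame invariance theorem rather than citing Lemma \ref{notproper}; the naturality of $\psi$ under base change that worries you is simply asserted in the paper as the commutativity of diagram (\ref{6.15.1}).
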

\begin{proof} As we already noted there is a real closed extension $L$ of $K$ which contains a copy of $\mathbb R$ by Lemma \ref{completion}. There is a commutative diagram:
\begin{equation}\label{6.15.1}
\xymatrix{ \widehat\pi_{1}(X_C,x)\ar[r]^{\psi_{X_C}}\ar[d] &
\pi_{1}^{et}(X_C,x) \ar[d] \\
\widehat\pi_{1}(X_{L(\mathbf i)},x)
\ar[r]^{\psi_{X_{L(\mathbf i)}}} &
\pi_{1}^{et}(X_{L(\mathbf i)},x)}
\end{equation}
furnished by base change. The first vertical map is an isomorphism by Theorem \ref{basic_comparison2}. The second map is also an isomorphism which can be seen as follows. By Hironaka's resolution of singularities there is a projective variety $Y$ over $C$ which contains $X_C$ as an open subvariety such that the complement $D\subset Y$ is a normal crossings divisor. By the tame invariance theorem the tame \'etale fundamental groups $\pi_1^D(Y,x)$ and $\pi_1^{D_{L(\mathbf i)}}(Y_{L(\mathbf i)},x)$ are isomorphic. But since the base fields have characteristic zero we have $\pi_1^D(Y,x)\cong\pi^{et}_1(X_C,x)$ and $\pi_1^{D_{L(\mathbf i)}}(Y_{L(\mathbf i)},x)\cong\pi^{et}_1(X_{L(\mathbf i)},x)$. Therefore $\pi^{et}_1(X_C,x)\cong\pi^{et}_1(X_{L(\mathbf i)},x)$.

Therefore it will be enough to show that $\widehat\pi_{1}(X_{L(\mathbf i)},x)\to\pi_{1}^{et}(X_{L(\mathbf i)},x)$ is an isomorphism for every $x\in X(C)$. In other words we may assume that $K$ contains $\mathbb R$. Using the same type of diagram as (\ref{6.15.1}) for $K$ and $\mathbb R$ we get that it will be enough to show the claim $K=\mathbb R$. As we explained above the validity of the claim is independent of the choice of $x$, and hence we may assume that $x\in X(\mathbb Q^{real}(\mathbf i))$. In this case we have a diagram:
$$\xymatrix{ \widehat\pi_{1}(S^{top}_*(X_{\mathbb C}),s_x)\ar[r]^{\ \ \ \tau_{X_{\mathbb C}}}
\ar@/^1.5pc/[rr]^{\psi_{X_{\mathbb C}}\circ\tau_{X_{\mathbb C}}} & 
\widehat\pi_{1}(X_{\mathbb C},x)\ar[r]^{\!\!\psi_{X_{\mathbb C}}} &
\pi_{1}^{et}(X_{\mathbb C},x),}$$
where $\widehat\pi_{1}(S^{top}_*(Y),s_y)$ is the profinite completion of $\pi_{1}(S^{top}_*(Y),s_y)$. 
The map $\tau_{X_{\mathbb C}}$ is an isomorphism by part $(ii)$ of Theorem \ref{basic_comparison2}, while the composition $\psi_{X_{\mathbb C}}\circ\tau_{X_{\mathbb C}}$ is an isomorphism by the classical Riemann existence theorem in SGA4 (see XI.4.3 of \cite{AGV}). Therefore
$\psi_{X_{\mathbb C}}$ must be an isomorphism, too.
\end{proof}
\begin{defn} For every positive integer let $A[n]$ denote following open variety of the affine space $\mathbb A^n$ over $\mathbb Q$:
$$A[n]=\{(x_1,x_2,\ldots,x_n)\in \mathbb A^n\mid
x_i\neq x_j\ (\forall i\neq j,\ i,j\in\{1,2,\ldots,n\})\}.$$
We have an affine map
$$\pi_n:A[n]\to A[n-1],\quad(x_1,x_2,\ldots,x_n)\to
(x_1,x_2,\ldots,x_{n-1})$$
for each $n\geq2$. This map is an elementary fibration, and hence we get that $A[n]$ is an affine elementary neighbourhood by induction. Moreover the comparison map $\widehat\pi_{1}(A[n]_C,x)\to\pi_{1}^{et}(A[n]_C,x)$ is an isomorphism for every $x\in A[n](C)$ by Lemma \ref{classical_riemann}. 
\end{defn}
\begin{lemma}\label{curve_riemann1} Let $X$ be a connected affine rational curve over $C$, and let $x\in X(C)$. Then the comparison map $\widehat\pi_{1}(X,x)\to\pi_{1}^{et}(X_C,x)$ is an isomorphism.
\end{lemma}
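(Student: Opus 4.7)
The plan is to realize $X$ as a fibre of the elementary fibration $\pi_{n}\colon A[n]\to A[n-1]$ and compare the semi-algebraic and \'etale fundamental group exact sequences, using Lemma \ref{classical_riemann} for $A[n]$ and $A[n-1]$ as the input.

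First I would observe that a smooth connected affine rational curve over $C$ is (up to a change of affine coordinate) isomorphic to $\mathbb{A}^1_C-\{s_1,\ldots,s_k\}$ for some $k\ge 0$. The case $k=0$ is trivial, so assume $k\ge 1$; then $X$ is precisely the fibre of $\pi_{k+1}\colon A[k+1]\to A[k]$ over the point $s=(s_1,\ldots,s_k)\in A[k](C)$. Since $\pi_{k+1}$ is an elementary fibration of smooth varieties (hence a finitely compactifiable nice map), Theorem \ref{fibration-theorem2} and Corollary \ref{homotpy_exact_sequence} give a long exact homotopy sequence based at $x\in X(C)$. Proposition \ref{elementary1} applied to the affine elementary neighbourhoods $A[k+1]$ and $A[k]$ shows that their higher homotopy groups vanish, so the long exact sequence collapses to the short exact sequence
$$1\longrightarrow \pi_1(X,x)\longrightarrow \pi_1(A[k+1],x)\longrightarrow \pi_1(A[k],s)\longrightarrow 1.$$

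Next I would promote this to an exact sequence of profinite completions. By Proposition \ref{curves_fungroup} the kernel $\pi_1(X,x)$ is a finitely generated free group $\Gamma_{0,k+1}$, and for any finite coefficient module $M$ the cohomology $H^q(\Gamma_{0,k+1},M)$ is finite for every $q$ (it vanishes for $q\ge 2$ and is finite for $q=0,1$). The quotient $\pi_1(A[k],s)$ is good by Proposition \ref{elementary1}. Therefore Lemma \ref{good_group} applies and yields an exact sequence
$$1\longrightarrow \widehat{\pi}_1(X,x)\longrightarrow \widehat{\pi}_1(A[k+1],x)\longrightarrow \widehat{\pi}_1(A[k],s)\longrightarrow 1.$$

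On the \'etale side, the elementary fibration $\pi_{k+1}$ in characteristic zero gives the standard short exact sequence of \'etale fundamental groups
$$1\longrightarrow \pi_1^{et}(X,\bar x)\longrightarrow \pi_1^{et}(A[k+1],\bar x)\longrightarrow \pi_1^{et}(A[k],\bar s)\longrightarrow 1$$
(this can be extracted from the tame/log fundamental group machinery applied to the good compactification of $\pi_{k+1}$, or directly from SGA1/SGA4 in Artin's setting). Because $A[k+1]$ and $A[k]$ are defined over $\mathbb{Q}\subset \mathbb{Q}^{real}(\mathbf i)$, Lemma \ref{classical_riemann} shows that the comparison homomorphisms $\psi_{A[k+1]}$ and $\psi_{A[k]}$ are isomorphisms. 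Arranging these sequences and the comparison maps into a commutative ladder with exact rows and applying the five lemma identifies $\psi_X\colon \widehat{\pi}_1(X,x)\to \pi_1^{et}(X,\bar x)$ as an isomorphism.

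The main obstacle I expect is the two \emph{exactness} assertions: on the semi-algebraic side, making sure the hypotheses of Lemma \ref{good_group} are met so that profinite completion preserves the exact sequence (this needs the finite generation and finite cohomology of the free kernel, which is true but worth recording), and on the \'etale side, justifying the short exact sequence of \'etale fundamental groups for an elementary fibration with base of possibly positive dimension over an algebraically closed field. Once those two exact sequences are in hand, the rest of the argument is a pure diagram chase driven by Lemma \ref{classical_riemann}.
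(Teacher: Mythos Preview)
Your proposal is correct and follows essentially the same route as the paper: realize $X$ as a fibre of $\pi_{k+1}\colon A[k+1]\to A[k]$, compare the profinitely completed semi-algebraic and the \'etale short exact sequences, and conclude via the five lemma using Lemma \ref{classical_riemann} for $A[k+1]$ and $A[k]$. The only minor difference is that the paper justifies the \'etale short exact sequence by first invoking Proposition \ref{elementary2} to get $\pi_2^{et}(A[k],s)=0$ and then applying Friedlander's homotopy exact sequence (Theorem 11.5 of \cite{Fr}), rather than appealing to SGA directly.
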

\begin{proof} By assumption $X=\mathbb A^1_C-S$, where
$S\subset\mathbb A^1(C)$ is a set of cardinality $n$ for some natural number $n$. Since $\pi^{et}_1(\mathbb A^1_C,x)$ is trivial for any $x\in\mathbb A^1(C)$, the claim follows from Proposition \ref{curves_fungroup} in the case $n=0$. Now assume that $n$ is positive. Then there is a $y\in A[n](C)$ such that $X$ is isomorphic to the fibre of $\pi_{n+1}:A[n+1]\to A[n]$ over $y$. We may identify $X$ with this fibre. As we saw in the proof of Proposition \ref{elementary1} there is a short exact sequence:
$$\xymatrix{1\ar[r] &\pi_1(X,x)\ar[r] &
\pi_1(A[n+1],x)\ar[r] &\pi_1(A[n],y)\ar[r] &1.}$$
We have also shown there that the conditions of Lemma \ref{good_group} are satisfied, and therefore the short sequence
$$\xymatrix{1\ar[r] &\widehat\pi_1(X,x)\ar[r] &
\widehat\pi_1(A[n+1],x)\ar[r] &\widehat\pi_1(A[n],y)\ar[r] &1}$$
we got via profinite completion is also exact by the second part of Lemma \ref{good_group}. On the other hand by Proposition \ref{elementary2} we have $\pi^{et}_2(A[n],y)=0$, so the homotopical exact sequence for the \'etale homotopy type (see Theorem 11.5 of \cite{Fr} on page 107) applied to the map
$\pi_{n+1}$ furnishes a short exact sequence:
$$\xymatrix{1\ar[r] &\pi^{et}_1(X,x)\ar[r] &
\pi^{et}_1(A[n+1],x)\ar[r] &\pi^{et}_1(A[n],y)\ar[r] &1.}$$
Consequently we have a commutative diagram of short exact sequences:
$$\xymatrix{ 1\ar[r] &
\widehat\pi_{1}(X,x)\ar[r]\ar[d]^{\psi_X} &
\widehat\pi_{1}(A[n+1],x)\ar[r]\ar[d]^{\psi_{A[n+1]}} &
\widehat\pi_{1}(A[n],y) \ar[r]\ar[d]^{\psi_{A[n]}} &1 \\ 
1\ar[r] &
\pi_{1}^{et}(X,x)\ar[r] &
\pi_{1}^{et}(A[n+1],x)\ar[r] & \pi_{1}^{et}(A[n],y)
\ar[r] &1.}$$
By Lemma \ref{classical_riemann} the middle and last vertical maps are isomorphisms, so the first map must be an isomorphism, too.
\end{proof}
\begin{lemma}\label{curve_riemann2} Let $X$ be a connected quasi-projective algebraic curve over $C$, and let $x\in X(C)$. Then $\psi_C:\widehat\pi_{1}(X,x)\to\pi_{1}^{et}(X_C,x)$ is an isomorphism.
\end{lemma}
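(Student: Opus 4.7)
The plan is to reduce to the smooth case, then use the moduli of marked curves over $\overline{\mathbb Q}=\mathbb Q^{real}(\mathbf i)$ to realize $X$ as a geometric fiber of an elementary fibration $f:T\to M$ with $M$ already handled by Lemma \ref{classical_riemann}, and finally to compare topological and \'etale short exact sequences via the five lemma. First I would pass to the smooth case: for a singular $X$, the normalization $\nu:\tilde X\to X$ is a finite birational morphism from a smooth quasi-projective curve, bijective outside the finite singular locus $S\subset X$, and a Seifert--van Kampen argument expresses $\mathcal C_X$ and $\mathcal D_X$ as the corresponding categories for $\tilde X$ together with matching gluing data over $\nu^{-1}(S)\to S$, compatibly with $\psi$. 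The trivial subcases $X\cong\mathbb P^1$ (both groups vanish) and $X$ affine rational (Lemma \ref{curve_riemann1}) being immediate, I focus on a smooth geometrically connected curve $X$ of type $(g,d)$.

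Repeating the moduli argument of Proposition \ref{curves_fungroup} over $\overline{\mathbb Q}$ --- using that the moduli problem of smooth genus-$g$ curves with $d$ marked points is defined over $\mathbb Z$ and becomes representable by a smooth variety after imposing a suitable level structure --- yields an elementary fibration $f:T\to M$ of smooth varieties defined over $\overline{\mathbb Q}$, with $M$ connected, geometric fibers of type $(g,d)$, and every such curve over any algebraically closed field of characteristic zero isomorphic to some fiber. By Theorem \ref{artin_k1} I may shrink $M$ Zariski-locally around the relevant point so that $M$ is itself an affine elementary neighbourhood over $\overline{\mathbb Q}$; then I pick $y\in M(C)$ with $T_{C,y}\cong X$ and view $x$ as a point of $T_C$. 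Note that $T$ is connected because $M$ and the fibers of $f$ are, so Lemma \ref{classical_riemann} will apply to both $T$ and $M$.

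Theorem \ref{fibration-theorem2}, Corollary \ref{homotpy_exact_sequence}, and the vanishing $\pi_k(M_C,y)=0$ for $k\geq2$ from Proposition \ref{elementary1} produce the semi-algebraic short exact sequence
$$1\to\pi_1(X,x)\to\pi_1(T_C,x)\to\pi_1(M_C,y)\to1.$$
By Proposition \ref{curves_fungroup} the group $\pi_1(X,x)\cong\Gamma_{g,d}$ is good with finite cohomology in finite coefficients (Definition \ref{6.1}), and the flanking groups are good by Proposition \ref{elementary1}, so Lemma \ref{good_group} converts this into the corresponding short exact sequence of profinite completions. On the \'etale side, the vanishing $\pi_2^{et}(M_C,y)=0$ from Proposition \ref{elementary2} truncates Friedlander's \'etale homotopy exact sequence for $f_C$ to
$$1\to\pi_1^{et}(X,x)\to\pi_1^{et}(T_C,x)\to\pi_1^{et}(M_C,y)\to1.$$
These two sequences fit into a commutative ladder whose vertical arrows are $\psi_X$, $\psi_{T_C}$, $\psi_{M_C}$; Lemma \ref{classical_riemann} applied to $T$ and $M$ (both defined over $\overline{\mathbb Q}$) makes the two right vertical arrows isomorphisms, and the five lemma forces $\psi_X$ to be an isomorphism as well.

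The main obstacle is the moduli step: ensuring that the elementary fibration $f:T\to M$ can be arranged over $\overline{\mathbb Q}$ so that $M$ is an affine elementary neighbourhood and every smooth curve of type $(g,d)$ over $C$ is realized as a geometric fiber. This relies on representability of $\mathcal M_{g,d}$ after imposing a level structure, together with a careful invocation of Artin's shrinking theorem; in the singular case one additionally has to check that $\psi$ behaves compatibly with the Seifert--van Kampen reconstruction at the singular locus, so that finite \'etale covers of $X$ and finite covers of $X_{lsa}$ are built from the same data on $\tilde X$.
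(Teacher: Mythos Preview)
Your argument is correct but takes a substantially different route from the paper's. Rather than invoking moduli, the paper works Zariski-locally on $X$ (having already observed that essential surjectivity of $\Psi_X$ is Zariski-local): using Riemann--Roch and Bertini it shows that every point of a smooth curve $X$ has a Zariski-open neighbourhood admitting a finite \'etale map to a connected affine rational curve $Y$ over $C$, and then a short argument shows that essential surjectivity of $\Psi$ propagates up a finite \'etale cover from $Y$ (where Lemma \ref{curve_riemann1} applies) to that neighbourhood. Your approach instead globalises the \emph{proof strategy} of Lemma \ref{curve_riemann1}, replacing the configuration family $A[n{+}1]\to A[n]$ by a universal family of type-$(g,d)$ curves over a base defined over $\overline{\mathbb Q}$, and then running the same five-lemma comparison via Lemma \ref{classical_riemann} for the total space and base. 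The paper's route is more elementary (only classical curve geometry; no fine moduli or level structures) and treats the rational case as a black box; yours is more uniform and conceptual, handling all $(g,d)$ in one stroke, but imports the representability of $\mathcal M_{g,d}$ with level structure and its geometric connectedness. Two small caveats on your write-up: when shrinking $M$ to an affine elementary neighbourhood you should cover $M$ by such neighbourhoods over $\overline{\mathbb Q}$ (Artin at $\overline{\mathbb Q}$-points) and then locate the $C$-point $y$ in one of them, since $y$ need not be $\overline{\mathbb Q}$-rational; and for $d=0$ the family $T\to M$ is smooth projective rather than an elementary fibration in the paper's sense (nonempty $Z$), so the fibration theorem and homotopy exact sequence should be invoked via the proper case (Corollary \ref{thom_isotropy2}) there. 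Your normalisation reduction for singular curves is extra: the paper's proof, like yours, effectively treats only the smooth case (it speaks of the smooth projective model $\widetilde X$ \emph{containing} $X$), and the lemma is only ever applied to smooth curves in the proof of Theorem \ref{riemann_existence}.
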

\begin{proof} In this case it will be simpler to show that the functor $\Psi_X$ is essentially surjective. First assume that there is a finite \'etale cover $f:X\to Y$ where $Y$ is a connected affine rational curve over $C$. Let $\gamma:\mathcal Z\to X_{lsa}$ be a finite cover. The composition of $\gamma$ and the map $f_{lsa}:X_{lsa}\to Y_{lsa}$ of $lsa$-spaces underlying $f$ is a finite cover of $Y_{lsa}$, so by Lemma \ref{curve_riemann1} there is a finite \'etale cover $h:Z\to Y$ such that $Z_{lsa}$ and $\mathcal Z$ are isomorphic in $\mathcal D_Y$. By Lemma \ref{faithful} the map
$$\mathrm{Hom} _{\mathcal C_Y}(Z,X) \to
\mathrm{Hom} _{\mathcal D_Z}(Z_{lsa},X_{lsa})$$
furnished by $\Psi_Y$ is bijective. Therefore there is a morphism $g:Z\to X$ such that the underlying map of $lsa$-spaces is
$\gamma$. Since both $h$ and $f$ are \'etale, and
$h=f\circ g$, the same holds for $g$.

Now let us consider the general case. Since the problem is local on $X$ by the above, it will be sufficient to show that $X$ can be covered by open subschemes which are finite \'etale covers of connected affine rational curves over $C$. We are going to use an argument similar to the proof of Lemma \ref{conn2}. Let $g$ be the genus of the unique smooth, projective, irreducible curve $\widetilde X$ containing $X$ as an open subscheme. Now let $x\in X(C)$ be arbitrary. Let $D$ be a divisor on $\widetilde X$ whose contains the complement of $X$ in $\widetilde X$, but does not contain $x$. By a routine application of the Riemann--Roch theorem we get that the full linear system of $\mathcal O(D)$ furnishes a projective embedding of $\widetilde X$ into $\mathbb P^{g+3}_C$. Now let $x\in X(C)$ be arbitrary. By Bertini's hyperplane section theorem there is a hyperplane $H\subset\mathbb P^{g+3}_C$ which intersects $\widetilde X$ transversally, contains $x$, and does not contain any point of the complement $\widetilde X-X$. The pencil spanned by $D$ and $H\cap\widetilde X$ furnishes a map $f:\widetilde X\to\mathbb P^1_C$ such that the pre-image of $0$ is $D$, the pre-image of $\infty$ is $H\cap\widetilde X$, and $f$ is \'etale at each point of $H\cap\widetilde X$. Therefore there is a non-empty open subcurve $U\subseteq\mathbb P^1_C$ which contains $f(x)$, and the restriction of $f$ onto $f^{-1}(U)$ is a finite, \'etale map onto $U$. 
\end{proof}
Now we are going to prove that the functor $\Psi_X$ is  essentially surjective in general. By the above the question is local over $X$, and so we may suppose that $X$ is an affine elementary neighbourhood by Theorem \ref{artin_k1}. We may assume that a sequence of maps as in (\ref{6.4.1}) is given. We are going to show the claim for $X=X_n$ by induction on $n$, that is, on the dimension of $X$. Since $X_1$ is a smooth, affine, connected curve, the claim follows from Lemma \ref{curve_riemann2}. Now suppose that it holds for $X_{n-1}$. Let $f:X\to X_{n-1}$ be the first map in (\ref{6.4.1}), and set $y=f(x)$. As we saw in the proof of Proposition \ref{elementary1} there is a short exact sequence:
$$\xymatrix{1\ar[r] &\pi_1(f^{-1}(y),x)\ar[r] &
\pi_1(X,x)\ar[r] &\pi_1(X_{n-1},y)\ar[r] &1.}$$
We have also shown there that the conditions of Lemma \ref{good_group} are satisfied, and therefore the short sequence
$$\xymatrix{1\ar[r] &\widehat\pi_1(f^{-1}(y),x)\ar[r] &
\widehat\pi_1(X,x)\ar[r] &\widehat\pi_1(X_{n-1},y)\ar[r] &1}$$
we got via profinite completion is also exact by the second part of Lemma \ref{good_group}. On the other hand by Proposition \ref{elementary2}  we have $\pi_2(X_{n-1},y)=0$, so the homotopical exact sequence for the \'etale homotopy type (see Theorem 11.5 of \cite{Fr} on page 107) applied to $f$ furnishes a short exact sequence:
$$\xymatrix{1\ar[r] &\pi_{1}^{et}(f^{-1}(y),x)\ar[r] &
\pi_{1}^{et}(X,x)\ar[r] &\pi_{1}^{et}(X_{n-1},y)\ar[r] &1.}$$
Consequently we have a commutative diagram of short exact sequences:
$$\xymatrix{ 1\ar[r] &
\widehat\pi_{1}(f^{-1}(y),x)\ar[r]\ar[d]^{\psi_{f^{-1}(y)}} &
\widehat\pi_{1}(X,x)\ar[r]\ar[d]^{\psi_X} &
\widehat\pi_{1}(X_{n-1},y) \ar[r]\ar[d]^{\psi_{X_{n-1}}} &1 \\ 
1\ar[r] &
\pi_{1}^{et}(f^{-1}(y),x)\ar[r] &
\pi_{1}^{et}(X,x)\ar[r] & \pi_{1}^{et}(X_{n-1},y)
\ar[r] &1.}$$
By the induction hypothesis the first and last vertical maps are isomorphisms, so the middle map must be an isomorphism, too.
\end{proof}

\section{Cohomological and homotopical comparison theorems}

Let $X$ be a nonsingular variety. Then $X$ can be endowed in a natural way with the structure of a complex manifold. I write $X_{an}$ for $X$ regarded as a complex manifold and $X_{cx}$ for $X$ regarded as a topological space with the complex topology (thus $X_{an}$ is $X_{cx}$ together with a sheaf of rings). By Riemann existence theorem in the previous section there is a natural one-to-one correspondence $F\mapsto F_{cx}$ between the locally constant sheaves $F$ on $X_{et}$ with finite stalks and the locally constant sheaves on $X_{cx}$ with finite stalks. The main theorem of this section is
\begin{thm}\label{cohomological} Let $X$ be a connected nonsingular variety over $\mathbb C$. For any locally constant sheaf $F$ on $X_{et}$ with finite stalks we have $H^r(X_{et},F)\cong H^r(X_{cx},F_{cx})$ for every $r\geq0$.
\end{thm}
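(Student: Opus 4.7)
The plan is to first handle affine elementary neighbourhoods by reducing both sides to group cohomology via the Eilenberg--MacLane property and goodness, and then to globalise to arbitrary smooth varieties by a hypercover spectral sequence using Artin's Theorem \ref{artin_k1}.

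For the affine case, let $X$ be an affine elementary neighbourhood over $\mathbb C$, let $x\in X(\mathbb C)$, and set $\pi=\pi_1(X,x)$. A locally constant sheaf $F$ on $X_{et}$ with finite stalks corresponds to a continuous action of $\pi_1^{et}(X,x)$ on a finite set $M$, and via Riemann existence (Theorem \ref{riemann_existence2}) to an action of $\widehat\pi$, and hence of $\pi$ through a finite quotient, on $M$. On the \'etale side, Proposition \ref{elementary2} gives
$$H^r(X_{et},F)\cong H^r(\pi_1^{et}(X,x),M).$$
On the topological side, Proposition \ref{elementary1} combined with part $(ii)$ of Theorem \ref{basic_comparison2} (applied over $K=\mathbb R$) gives
$$H^r(X_{cx},F_{cx})\cong H^r(\pi,M).$$
The goodness of $\pi$, also asserted in Proposition \ref{elementary1}, yields $H^r(\pi,M)\cong H^r(\widehat\pi,M)\cong H^r(\pi_1^{et}(X,x),M)$, and chaining these three isomorphisms settles the affine case.

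For the global case, I would use Theorem \ref{artin_k1}: every point of an arbitrary smooth variety $X$ has a Zariski neighbourhood which is an affine elementary neighbourhood. Applied iteratively, this produces a Zariski hypercover $U_\bullet\to X$ in which each $U_n$ is a disjoint union of affine elementary neighbourhoods. The hypercover spectral sequences in the \'etale topology and in the complex topology both abut to the corresponding cohomology of $X$, and the natural comparison map of sites $X_{cx}\to X_{et}$ induces a morphism between them which by the affine case above is an isomorphism on $E_2$, hence on the abutment.

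The main obstacle will be this global step. Intersections of affine elementary neighbourhoods are not themselves affine elementary neighbourhoods in general, only open subschemes of smooth varieties, so constructing a hypercover each of whose levels is a disjoint union of affine elementary neighbourhoods requires an iterated refinement via Theorem \ref{artin_k1}. One must additionally verify that the identifications assembled in the affine case are natural with respect to open immersions between affine elementary neighbourhoods, so that the term-wise isomorphisms promote to an isomorphism of spectral sequences and not merely an isomorphism at each level.
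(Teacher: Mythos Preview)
Your approach is correct in outline and takes a genuinely different route from the paper's. You globalise by hypercover descent; the paper instead introduces an auxiliary complex-\'etale site $X_{ecx}$ (whose cohomology agrees with that of $X_{cx}$ by the inverse function theorem) and runs the Leray spectral sequence for the morphism of sites between $X_{ecx}$ and $X_{et}$, reducing the theorem to the vanishing of the higher direct images. That vanishing is the content of Lemma~\ref{4.2}: any class in $H^s(U_{cx},F)$ with $s>0$ dies on some algebraic \'etale cover of $U$. The paper proves this by passing to an affine elementary neighbourhood via Theorem~\ref{artin_k1}, using the $K(\pi,1)$ property to kill the class on a finite topological cover, and invoking Riemann existence to algebraise that cover. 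Both arguments therefore rest on exactly the same ingredients (Theorem~\ref{artin_k1}, Propositions~\ref{elementary1} and~\ref{elementary2}, Theorem~\ref{riemann_existence2}), but yours identifies both sides directly as group cohomology and matches them via goodness, while the paper's is a local-vanishing argument that avoids constructing an explicit hypercover and the naturality bookkeeping you rightly flag. One point worth making explicit in your affine step: the chained isomorphism you assemble should be checked to coincide with the canonical comparison map, since it is that specific map that must be compatible with the simplicial structure of the hypercover; under the $K(\pi,1)$ identifications the comparison map becomes the map on group cohomology induced by $\pi_1(X,x)\to\pi_1^{et}(X,x)$, which is then an isomorphism by goodness and Theorem~\ref{riemann_existence2}.
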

This theorem is proved for all schemes of finite type over $C$ in Huber's thesis (see for example Satz 12.14 on page 187 and part $\gamma$ of the Satz on page 192 in \cite{Hu}, where such claims of increasing generality are stated). In particular Satz 12.14 on page 187 of \cite{Hu} stated very similarly, but only for constant sheaves. However it is easy to reduce the theorem above to this case by taking a trivialising finite \'etale cover and compare the resulting Hochschild--Serre spectral sequences both over $X_{et}$ and $X_{cx}$. Our proof, which only works in the smooth case, is very different, and does not use coherent sheaves, nor GAGA-type theorems.
\begin{proof} Let $X_{ecx}$ denote $X$ endowed with the Grothendieck topology for which the coverings are surjective families of \'etale maps
$U_i\to U$ of complex manifolds over $X$. There are continuous morphisms (of sites):
$$\xymatrix{X_{cx} & X_{ecx}\ar[l]\ar[r] & X_{et}.}$$
The inverse mapping theorem shows that every complex-\'etale covering $(U_i\to U)$ of a complex manifold $U$ has a refinement that is an open covering (in the usual sense). Therefore, the left hand arrow gives isomorphisms on cohomology. It remains to prove that the right hand arrow does also. For this we can use an entirely standard argument as follows. Consider the following situation: $X$ is a set with two topologies, $T_1$ and $T_2$ (in the conventional sense), and assume that $T_2$ is finer than $T_1$. Let $X_i$ denote $X$ endowed with the topology $T_i$. Because $T_2$ is finer than $T_1$, the identity map $f:X_2\to X_1$ is continuous. We ask the question: for a sheaf $F$ on $X_2$, when is $H^r(X_1,f_*F)\cong H^r(X_2,F)$ for every $r\geq0$. Note that $f_*F$ is simply the restriction of $F$ to $X_1$. An answer is given by the Leray spectral sequence
$$H^r(X_1,R^sf_*F)\Rightarrow H^{r+s}(X_2,F).$$
Namely, the cohomology groups agree if $R^sf_*F=0$ for every $s>0$. But $R^sf_F$ is the sheaf associated with the presheaf $U\mapsto H^s(U_2,F)$ (here $U$ is an open subset of $X_1$, and $U_2$ denotes the same set endowed with the $T_2$-topology). Thus we obtain the following criterion for $H^r(X_1,f_*F)\cong H^r(X_2,F)$: for any open $U\subseteq X_1$ and any $t\in H^s(U_2,F)$, where $s>0$, there exists a covering $U=\bigcup U(i)$ (for the $T_1$-topology) such that $t$ maps to zero in $H^s(U(i)_2,F)$ for every $i$. 

 and for this, the above discussion shows that it suffices to prove the following statement:
\end{proof}
\begin{lemma}\label{4.2} Let $U$ be a connected nonsingular variety, and let $F$ be a locally constant sheaf on $U_{ecx}$ with finite stalks. For every $t\in H^s(U_{cx},F)$, where $s>0$, there exists an \'etale covering $U(i)\to U$ (in the algebraic sense), such that $t$ maps to zero in $H^s(U(i)_{cx},F)$ for each $i$.
\end{lemma}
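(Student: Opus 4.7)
The strategy is to mimic the SGA 4 XI argument, replacing the classical $K(\pi,1)$-theorem and Riemann existence with their analogues proved in Sections 7--8 of the present paper.

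First, I would observe that the conclusion is Zariski-local on $U$: if for every point $u \in U$ there is a Zariski-open neighbourhood $U_u \subseteq U$ together with an étale covering $\{W_{u,j} \to U_u\}$ each of whose terms pulls $t|_{(U_u)_{cx}}$ back to zero, then the combined family $\{W_{u,j} \to U\}$ is the required étale covering of $U$. By Theorem \ref{artin_k1} every point of $U$ admits a Zariski-open neighbourhood that is an affine elementary neighbourhood, so I may assume $U$ itself is one. Setting $\pi := \pi_1(U_{cx}, u)$, Proposition \ref{elementary1} combined with part $(ii)$ of Theorem \ref{basic_comparison2} then says that $U_{cx}$ is weakly equivalent to the Eilenberg--MacLane space $B\pi$ and that $\pi$ is a good group.

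Second, I would translate the class $t$ into profinite group cohomology. The locally constant sheaf $F$ with finite stalks corresponds to a finite $\pi$-module $M$; because $\pi \to \mathrm{Aut}(M)$ has finite image, the action extends uniquely to a continuous $\widehat\pi$-action, and hence (via Theorem \ref{riemann_existence2}) to a continuous $\pi_1^{et}(U, u)$-action. Since $U_{cx} \simeq B\pi$ and $\pi$ is good, one obtains
$$H^s(U_{cx}, F) \;\cong\; H^s(\pi, M) \;\cong\; H^s(\widehat\pi, M) \;\cong\; H^s(\pi_1^{et}(U, u), M),$$
where the middle isomorphism uses goodness and the last is Theorem \ref{riemann_existence2}. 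Let $\bar t$ denote the image of $t$ in the rightmost group.

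Third, I would kill $\bar t$ by the standard profinite limit argument. For any profinite group $G$ and any finite continuous $G$-module $M$, continuous cohomology is the filtered colimit $\varinjlim_N H^s(G/N, M^N)$ over open normal subgroups $N$; hence $\bar t$ factors through $H^s(\pi_1^{et}(U, u)/N, M)$ for some open normal $N$, and its restriction to $N$ vanishes. Let $\pi \colon V \to U$ be the connected finite étale Galois cover corresponding to $N$, so that $\pi_1^{et}(V, v) = N$. Then $V$ is again finite étale over an affine elementary neighbourhood; its topological fundamental group $\pi' := \pi_1(V_{cx}, v)$ is a finite-index subgroup of $\pi$, hence again good (a standard consequence of Serre's criteria, of the same flavour as Lemma \ref{good_group}), and $V_{cx} \simeq B\pi'$. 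Running the same chain of isomorphisms for $V$ and invoking naturality of each link in that chain identifies the topological pullback $\pi^* \colon H^s(U_{cx}, F) \to H^s(V_{cx}, F|_V)$ with the profinite restriction $H^s(\pi_1^{et}(U, u), M) \to H^s(N, M)$; in particular $\pi^* t = 0$, and $\{V \to U\}$ is the required étale covering.

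The main obstacle I anticipate is the compatibility verification at the end of step three: namely, that the topological pullback map, threaded through the comparison isomorphisms for $U$ and $V$ separately, really coincides with the profinite restriction attached to $\pi_1^{et}(V) \hookrightarrow \pi_1^{et}(U)$. This is a naturality assertion for the three ingredients (topological cohomology equals group cohomology of $\pi$, goodness compares discrete and profinite group cohomology, and Riemann existence identifies $\widehat\pi$ with $\pi_1^{et}$), each of which is natural by construction. Everything else — Zariski localisation, reduction to the $B\pi$ model, and the profinite limit argument — is formal given the machinery assembled in the preceding sections.
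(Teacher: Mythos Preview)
Your proposal is correct and follows essentially the same route as the paper: localise to an affine elementary neighbourhood via Theorem~\ref{artin_k1}, use the $K(\pi,1)$-property from Proposition~\ref{elementary1} to identify sheaf cohomology with group cohomology, invoke goodness of $\pi$ to pass to profinite group cohomology, kill the class by an open subgroup, and then realise that open subgroup as a finite \'etale cover via Theorem~\ref{riemann_existence2}. The paper's proof compresses all of this into the single sentence ``$U$ is an Eilenberg--MacLane space, so its cohomology can be killed by a cover, which is algebraic by the Riemann existence theorem''; your version spells out the profinite limit argument and the compatibility check, and handles locally constant $F$ directly rather than first reducing to constant coefficients, but the substance is identical.
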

\begin{proof} We use induction on the dimension of $U$. Clearly the problem is local on $U$ for the \'etale topology, i.e., it suffices to prove the statement for the image $t_i$ of $t$ in $H^r(U(i),F)$ for each $U(i)$ in some \'etale covering $(U(i)\to U)_{i\in I}$ of $U$. Thus we can assume that $F$ is constant, and that $U$ has been replaced by some Zariski-small set $U$. We now prove Lemma \ref{4.2}. Because the statement is local for the \'etale topology on $U$, we may assume that $U$ admits an elementary fibration and that $F$ is constant: $F=\Lambda$. In this case we can use that $U$ is an Eilenberg-McLane space, so its cohomology can be killed by a cover, which is algebraic by the Riemann existence theorem. The preceeding discussion shows that this completes the proof of Theorem \ref{cohomological}.
\end{proof}

\section{The Artin-Mazur comparison theorem}

\begin{defn} Let $X$ be a connected, pointed lsa space. Assume that every admissible open subset of $X$ is paracompact, and that $X$ is locally contractable, i.e., that every point $x\in X$ contains arbitrarily small contractable neighbourhoods. Such a space is locally
arc-wise connected. Let $\mathcal C$ be the ordinary site on $X$ of coproducts of open subsets. We denote by $S_*X$ the simplical set of singular simplexes of $X$.
\end{defn}
\begin{thm}\label{5.2} Let $U.$ be an admissible hypercovering of $\mathcal C$ such that every connected component of every $U_n$ is contractible. Then the simplicial set $\pi_0(U.)$ is isomorphic to the simplicial set $S_*X$ in $\textrm{\rm Ho}(SSets)$.
\end{thm}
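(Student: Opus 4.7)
The plan is to adapt the classical Verdier--Artin--Mazur hypercovering comparison argument to the semi-algebraic setting. Form the bisimplicial set $V$ with $V_{n,m}=S_m(U_n)$, and let $\mathrm{diag}(V)$ be its diagonal simplicial set. Two natural simplicial maps out of $\mathrm{diag}(V)$ present themselves: the augmentation $\alpha:\mathrm{diag}(V)\to S_*X$ coming from $U_\cdot\to X$, and the projection $\beta:\mathrm{diag}(V)\to\pi_0(U_\cdot)$ induced by the component maps $S_*(U_n)\to\pi_0(U_n)$ (with the target viewed as a bisimplicial set constant in the $m$-direction). I will show both $\alpha$ and $\beta$ are weak equivalences; 2-out-of-3 in $\mathrm{Ho}(SSets)$ then yields the desired isomorphism.

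The map $\beta$ is the easier direction. The hypothesis that every connected component of every $U_n$ is contractible, combined with Lemma~\ref{2.10} and Remark~\ref{2.13a}, implies that for each $n$ the canonical map $S_*(U_n)\to\pi_0(U_n)$ (with target viewed as a discrete simplicial set) is a weak equivalence. Because each $S_*(U_n)$ is fibrant by Lemma~\ref{2.6}, a levelwise weak equivalence of bisimplicial sets with rows fibrant induces a weak equivalence on diagonals (Bousfield--Friedlander), giving that $\beta$ is a weak equivalence.

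The substantive part is showing $\alpha$ is a weak equivalence; this is the descent statement for the semi-algebraic singular functor $S_*$ along admissible hypercoverings. The strategy is an inductive horn-filling argument: given a lifting problem consisting of $\Lambda^n_{k,K}\to\mathrm{diag}(V)$ and $\Delta^n_K\to S_*X$, Woerheide's trick (Lemma~\ref{woerheide}) furnishes a triangulation $(A,B)$ of $(\Delta^n_K,\Lambda^n_{k,K})$ refined so that each simplex of $A$ lies in the image of a connected component of some $U_0$. Using the defining property of a hypercovering (an $n$-simplex in $U_\cdot$ is the same as a compatible family of $n$-simplices in $\mathrm{cosk}_{n-1}U_\cdot$), one lifts the subdivided simplex through $V$ piece by piece, organised via the $\mathcal U$-subordinate lifting framework of Corollary~\ref{3.3} with $\mathcal U$ the collection of images of components.

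The main obstacle will be the bookkeeping needed to organise compatible subdivisions simultaneously across all simplicial levels of the hypercovering; this is the combinatorial heart of the Verdier argument, and the essential new ingredient required in our setting is Woerheide's semi-algebraic subdivision playing the role of the classical barycentric subdivision that powers the topological version of the theorem.
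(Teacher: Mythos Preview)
Your overall architecture matches the paper exactly: the paper forms the same bisimplicial set with diagonal $(DU)_n=S_nU_n$ and the same pair of maps to $\pi_0(U_.)$ and to $S_*X$ (with the labels $\alpha,\beta$ swapped relative to yours). For your $\beta$ the paper argues cohomologically via the first Eilenberg--Zilber spectral sequence, which degenerates because $H^q(U_p,A)=0$ for $q>0$ by contractibility of the components; your appeal to the diagonal lemma is a cleaner packaging of the same collapse, and in fact sidesteps the paper's preliminary reduction (via covering spaces) from weak equivalence to cohomology isomorphism.

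Where you diverge from the paper is in the augmentation map (your $\alpha$, the paper's $\beta$). The paper does not attempt any direct lifting. Instead it first shows that both maps induce equivalences of categories of simplicial covering spaces, thereby matching fundamental groups, and then reduces to proving isomorphisms on ordinary cohomology with constant coefficients after passing to an arbitrary connected cover $Y\to X$. For the augmentation this is done by comparing the second Eilenberg--Zilber spectral sequence of $A^{S_*U_.}$ with the one arising from the flabby sheaf of singular cochains $\mathcal S^q(\cdot,A)$; the hypercovering condition enters only through the identification $H^q(\mathcal S^p(U_.,A))\cong H^q(X,\mathcal S^p(A))$.

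Your proposed route for $\alpha$, a direct homotopical lift in the style of Dugger--Isaksen, is a legitimate alternative strategy, but the sketch as written has genuine gaps. First, solving lifting problems against horn inclusions $\Lambda^n_k\hookrightarrow\Delta^n$ would only show that $\alpha$ is a Kan fibration, not that it is a weak equivalence; for the latter you need lifting against all boundary inclusions $\partial\Delta^n\hookrightarrow\Delta^n$, or an independent argument. Second, there is a type mismatch: an $n$-horn in $\mathrm{diag}(V)$ consists of maps $\sigma_j:\Delta^{n-1}_K\to U_{n-1}$ whose mutual compatibilities hold only \emph{after} composing with the structure maps $U_{n-1}\to U_{n-2}$ of the hypercovering, so they do not glue to a single semi-algebraic map $\Lambda^n_{k,K}\to U_{n-1}$ and Lemma~\ref{woerheide} does not apply in the way you describe. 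Third, Corollary~\ref{3.3} treats $\mathcal U$-subordinate lifting against a fixed map that is already a fibration over each $U_*\in\mathcal U$; it is not designed to organise lifts through the tower of spaces $U_n$ indexed by simplicial degree, which is exactly the ``bookkeeping'' you flag but do not supply.
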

Because those hypercoverings $U.$ of $\mathcal C$ such that every connected component of every $U_n$ is contractible are cofinal by assumption, we have the following immediate
\begin{cor} The pro-object $\Pi(\mathcal C)$ is canonically isomorphic to the element $S_*X$ in $\textrm{\rm Pro}-\textrm{\rm Ho}(SSets)$.\qed
\end{cor}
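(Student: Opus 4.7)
The plan is that the corollary should follow directly from Theorem~\ref{5.2} once the cofinality of ``good'' hypercoverings is invoked. First, I would recall that $\Pi(\mathcal C)$ is by construction the pro-object in $\mathrm{Ho}(SSets)$ obtained from the filtered diagram $U_\cdot \mapsto \pi_0(U_\cdot)$ indexed by the category of admissible hypercoverings of $\mathcal C$ (with category structure given by homotopy classes of simplicial maps between hypercoverings). The task is then to identify this pro-object with the constant pro-object whose single value is $S_*X$.

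Next I would invoke the hypothesis already stated just before the corollary, namely that the full subcategory of admissible hypercoverings $U_\cdot$ such that every connected component of every $U_n$ is contractible is cofinal. By a standard fact of pro-object theory, restricting a pro-diagram to a cofinal subcategory yields a canonically isomorphic pro-object; hence $\Pi(\mathcal C)$ is canonically isomorphic in $\mathrm{Pro}\text{-}\mathrm{Ho}(SSets)$ to the restriction of the pro-diagram $U_\cdot\mapsto\pi_0(U_\cdot)$ to the subcategory of good hypercoverings.

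On this restricted diagram Theorem~\ref{5.2} supplies, for each good hypercovering $U_\cdot$, an isomorphism $\varphi_{U_\cdot}\colon\pi_0(U_\cdot)\xrightarrow{\sim} S_*X$ in $\mathrm{Ho}(SSets)$. The main step, and the point I expect to be the main obstacle, is to verify that these isomorphisms are natural with respect to refinements $V_\cdot\to U_\cdot$ of good hypercoverings: precomposing $\varphi_{U_\cdot}$ with the refinement-induced map $\pi_0(V_\cdot)\to\pi_0(U_\cdot)$ must agree with $\varphi_{V_\cdot}$ in $\mathrm{Ho}(SSets)$. This naturality has to be extracted from the construction of $\varphi_{U_\cdot}$ supplied in the proof of Theorem~\ref{5.2}; the expected argument is that $\varphi_{U_\cdot}$ is built from the augmentation $U_\cdot\to X$ together with a canonical comparison map involving the diagonal of a bisimplicial set, and both of these are manifestly functorial in refinements of hypercoverings.

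Granting the naturality, the restricted pro-diagram becomes levelwise isomorphic to the constant diagram with value $S_*X$, and all transition maps are carried to the identity on $S_*X$ in $\mathrm{Ho}(SSets)$. Consequently the restricted pro-object is isomorphic to the constant pro-object $S_*X$, and combining this with the cofinality reduction yields the desired canonical isomorphism $\Pi(\mathcal C)\cong S_*X$ in $\mathrm{Pro}\text{-}\mathrm{Ho}(SSets)$.
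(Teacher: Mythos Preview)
Your proposal is correct and follows essentially the same approach as the paper: the paper simply declares the corollary ``immediate'' from Theorem~\ref{5.2} together with the stated cofinality of hypercoverings with contractible components, which is exactly your reduction. Your added care about naturality of the comparison isomorphisms under refinement---extracted, as you say, from the zig-zag $\pi_0(U_\cdot)\xleftarrow{\alpha}(DU)_\cdot\xrightarrow{\beta}S_*X$ in the proof of Theorem~\ref{5.2}, both legs of which are visibly functorial in $U_\cdot$---is a point the paper leaves implicit.
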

\begin{proof}[Proof of Theorem \ref{5.2}] Let $S_*U_n$ denote the singular complex of $U_n$. Then $S_*U.$ is a bisimplicial object in $Sets$. We denote by $(DU).$ its diagonal simplicial set $(DU)_n=S_nU_n$. Then we have obvious maps of simplicial sets:
$$\xymatrixcolsep{5pc}\xymatrix{
&(DU).\ar[ld]_{\alpha} \ar[rd]^{\beta} & \\
\pi_0(U.) & & S_*X,}$$
and we claim that these two maps are homotopy equivalences in $\textrm{\rm Ho}(SSets)$, which will prove the theorem. First we are going to show that the category of simplicial covering spaces of the three objects are equivalent via $\alpha,\beta$. Since $X$ is locally arc-wise connected, we can identify simplicial covering spaces of $S_*X$ with locally trivial covering spaces of $X$. Now by the discussion of section 10 of Artin--Mazur, simplicial covering spaces of $\pi_0(U.)$ are given by gluing data for a covering space of $X$ relative to the open covering $U_0$. Thus it is trivial that coverings of $\pi_0(U.)$ and of $S_*X$ correspond. It remains to show for instance that every descent data for a simplicial covering of $A.$ is obtained from descent data on $\pi_0(U.)$ in a unique way. This is a simple exercise which we leave to the reader.

Now let $Y\to X$ be a covering space, say connected. Then $U'.=Y\times_XU.$ is obviously a hypercovering of $Y$ satisfying the $J$ assumptions of the theorem, and if we form the simplicial covering space $\pi_0(U'.)\times_{\pi_0(U.)}(DU).$, it is in fact the diagonal simplicial set $DU'_q=S_qU'_q$ of $S_*U.$. Moreover the covering space of $S_*X$ corresponding to $Y$ is just $S_*Y$. Therefore we obtain a diagram
$$\xymatrixcolsep{5pc}\xymatrix{
&(DU').\ar[ld]_{\alpha} \ar[rd]^{\beta} & \\
\pi_0(U'.) & & S_*Y,}$$
covering the diagram above. To show that $\alpha,\beta$ are homotopy equivalences, it suffices to show that for every such $Y$, the maps $\alpha,\beta$ induce isomorphisms on cohomology with arbitrary coefficients $A$. Thus we may as well drop the $'$.

By the Eilenberg-Zilber theorem, the two spectral sequences
$$'E^{pq}_2=H^p_v(H^q_h(A^{S_*U.}))\Rightarrow H^{p+q}((DU).,A)$$
$$''E^{pq}_2=H^p_v(H^q_h(A^{S_*U.}))\Rightarrow H^{p+q}((DU).,A)$$
where $h$ means horizontal and $v$ means vertical. Since $U_q$ is component-wise contractable, we have $H^q(U_p,A)={}' E^{pq}_1=0$, if $q>0$. Hence the spectral sequence yields
$$'E^{p0}_2=H^p(H^0(U_p,A))=H^p(\pi_0(U.),A)\cong H'((DU).,A).$$
This shows that $\alpha$ is a homotopy equivalence. To show that $\beta$ induces an isomorphism on cohomology, we use the sheaf of singular cochains $\mathcal S^q(.,A)$. The cosimplicial sheaf $S^*(A)$ is a flabby resolution of the constant sheaf $\underline A$, and it is used to give the usual identification of sheaf cohomology with singular cohomology.

Consider the surjective map of bicomplexes
$$A^{S_pU_q}\longrightarrow\mathcal S^q(.,A)\longrightarrow 0.$$
A consideration of the first spectral sequences $'E^{pq}_2$ above shows that $\phi$ induces an isomorphism on the cohomology of the associated total complexes. Consider the second spectral sequences: We have $H^q(\mathcal S^p(U.,A))\cong H^q(X,\mathcal S^p(A))$. This follows from the spectral sequence (8.15) of \cite{AM} since $\mathcal S^p(A)$ is flabby, and moreover $H^q(X,\mathcal S^p(A))=0$ if $q>0$. Thus the map
$\phi$ on $''E^{pq}_2$ yields
\begin{equation}\label{12.4}
''E^{p0}_2= H^p_h(H^0_v(A^{S_*U.}))\longrightarrow H^p(X,A),
\end{equation}
and the groups $H^p(X,A)$ are the abutments of the two spectral sequences. Now we are interested in the map
$$A^{S_*X}\longrightarrow A^{S_*U.}$$
given by the projection $S_*U_q\to S_*X$. This map induces a map
$$H^p(X,A)\longrightarrow ''E^{p0}_2.$$
Clearly the composition of this map with (\ref{12.4}) is the identity, which shows that $\beta$ induces an isomorphism on cohomology, and completes the proof.
\end{proof}
\begin{thm} Let $X$ be a connected, pointed scheme of finite type over $C$. Then there is a canonical map $e:X_{cl}\to X_{et}$, and it induces an isomorphism on profinite completions.
\end{thm}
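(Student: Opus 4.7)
The plan is to follow the Artin--Mazur strategy adapted to the semi-algebraic setting, leveraging the Riemann existence theorem (Theorem \ref{riemann_existence}) and the cohomological comparison (Theorem \ref{cohomological}) together with the hypercovering computation of homotopy type in Theorem \ref{5.2}. First I would construct the map $e:X_{cl}\to X_{et}$ at the level of hypercoverings. By Artin's Theorem \ref{artin_k1}, every smooth point of $X$ admits a Zariski neighborhood which is an affine elementary neighborhood; iterating fibre products one obtains a cofinal system of \'etale hypercoverings $U_\bullet\to X$ with each $U_n$ a disjoint union of affine elementary neighborhoods. Applying the functor $(-)_{lsa}$ to $U_\bullet$ yields, via Theorem \ref{thom_isotropy2} and Theorem \ref{fibration-theorem2}, an admissible hypercovering of $X_{lsa}$ in the sense of the preceding section, and the induced map $\pi_0(U_\bullet^{lsa})\to\pi_0(U_\bullet^{et})$ of pro-simplicial sets gives $e$ (after passing to the appropriate limit in $\textrm{Pro-}\textrm{Ho}(SSets)$).

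Next I would show that $e$ becomes an isomorphism after profinite completion by a component-wise analysis. By Proposition \ref{elementary1}, each connected component of each $U_n$ is semi-algebraically a $K(\pi,1)$ with good fundamental group; by Proposition \ref{elementary2}, each is \'etale a $K(\pi^{et},1)$; and the Riemann existence theorem in the form of Theorem \ref{riemann_existence2} gives $\widehat{\pi}_1(U_n)_{lsa}\cong\pi_1^{et}(U_n)$ on each component. The variant of Theorem \ref{5.2} I need then computes both $X_{cl}$ and $X_{et}$ from the simplicial diagram of groupoids $\pi_{\leq 1}(U_\bullet)$ via a spectral sequence argument, the contractibility hypothesis in Theorem \ref{5.2} being replaced by ``$K(\pi,1)$ with good $\pi_1$''; this replacement is legitimate after profinite completion because goodness forces the sheaf cohomology with finite twisted coefficients to agree with the profinitely completed group cohomology, killing the relevant off-diagonal $E_1$-terms.

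Finally I would invoke the cohomological comparison Theorem \ref{cohomological}, extended to twisted finite coefficients by descent along the finite \'etale covers trivializing a given finite local system (which exist by Theorem \ref{riemann_existence}), to conclude that the induced map of $e$ on cohomology with finite local coefficients is an isomorphism in every degree. A standard pro-finite Whitehead theorem for pointed connected pro-spaces (agreement of $\pi_1$ of profinite completions plus agreement of cohomology with finite twisted coefficients implies weak equivalence of profinite completions) then finishes the argument.

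The main obstacle will be the middle step: justifying that the spectral-sequence comparison of Theorem \ref{5.2} survives with ``contractible'' relaxed to ``$K(\pi,1)$ with good fundamental group'' once profinite completion is taken. The bookkeeping amounts to verifying that for each $U_n$ with component-wise good $\pi_1$, the sheaf $R^q\varepsilon_* \underline{A}$ (with $\varepsilon$ comparing the two sites) vanishes in the relevant range on finite-order torsion coefficients, which reduces via the Leray spectral sequence to the pointwise statement that $H^q$ of a $K(\pi,1)$ with good $\pi_1$ matches $H^q(\widehat{\pi},A)$ for any finite $\pi$-module $A$; this is precisely the defining property of good groups from Definition \ref{6.1} and should go through cleanly, but requires care to organise at the level of pro-objects rather than individual spaces.
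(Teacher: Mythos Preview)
Your endgame---invoking the comparison theorems on $\pi_1$ and on twisted finite cohomology, then applying the Artin--Mazur $\natural$-criterion and profinite Whitehead theorem ((4.3) and (12.5) of \cite{AM})---is exactly how the paper finishes. The divergence is in how $e$ is constructed and in how much spectral-sequence work is done beforehand.

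The paper does not build $e$ from a cofinal family of \'etale hypercoverings by elementary neighbourhoods. Instead it introduces a third site $\mathcal C''$ whose objects are lsa-spaces over $X_{lsa}$ that are local isomorphisms onto their image. Since an \'etale morphism of schemes becomes a local isomorphism of lsa-spaces, and since open inclusions are trivially local isomorphisms, one gets site maps $\mathcal C''\to\mathcal C$ and $\mathcal C''\to\mathcal C'$. Every hypercovering of $\mathcal C''$ is dominated by one of $\mathcal C$, so $\Pi(\mathcal C'')\to\Pi(\mathcal C)$ is already an equivalence in $\textrm{Pro-Ho}(SSets)$; inverting it and composing with $\Pi(\mathcal C'')\to\Pi(\mathcal C')$ yields $e$. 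Theorem \ref{5.2} is then applied only once, in its unmodified form with genuinely contractible components, to identify $\Pi(\mathcal C)$ with $S_*(X_{lsa})$. No variant of Theorem \ref{5.2} for $K(\pi,1)$-components is needed.

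In your route the obstacle you flag is real and bites earlier than you suggest: to even say that $\{\pi_0(U_\bullet^{lsa})\}$ represents $X_{cl}$ you already need the relaxed version of Theorem \ref{5.2}, so the construction of $e$, not just the verification that it is an isomorphism, is in jeopardy. (Note also that $\pi_0(U_\bullet^{lsa})=\pi_0(U_\bullet^{et})$ on the nose by Proposition \ref{connectivity}, so the ``induced map'' you write down is the identity; the content lies entirely in identifying this common pro-object with $X_{cl}$.) Finally, your construction via Theorem \ref{artin_k1} only covers smooth $X$, whereas the statement is for arbitrary finite-type schemes; the paper's site-theoretic construction of $e$ needs no smoothness, and for the comparison theorems in the singular case it cites Huber.
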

\begin{proof} Let $\mathcal C'$ denote the small \'etale site of $X$ and let $\mathcal C$ be the ordinary (Grothendieck) site on the coproducts of admissible open subsets of $X(C)$ with respect to its usual topology. In addition to these sites we also introduce the site $\mathcal C''$ whose objects are lsa spaces $Y$ lying over the lsa space of $X_{cx}$ such that the map $Y \rightarrow X$ is a local isomorphism, i.e., that every point $x\in Y$ has a neighborhood which is isomorphic onto its image. Since any \'etale map of schemes $Y \rightarrow X$ over $\overline K$ is a local isomorphism on the underlying topological spaces, and since an open set is in $\mathcal C''$, we have morphisms of sites:
$$\xymatrixcolsep{5pc}\xymatrix{
&\mathcal C''\ar[ld]_{m} \ar[rd]^{m'} & \\
\mathcal C & & \mathcal C'.}$$
Now it is clear from the definition of local isomorphisms that every hypercovering of $\mathcal C''$ is dominated by a hypercovering of $\mathcal C$. Thus the map $\Pi(m):
\Pi(\mathcal C'')\rightarrow\Pi(\mathcal C)$ is a homotopy equivalence in $\textrm{\rm Pro}-\textrm{\rm Ho}(SSets)$, and so
$$\Pi(m)(E\Gamma):\Pi(\mathcal C'')(E\Gamma)\rightarrow
\Pi(\mathcal C)(E\Gamma)$$
is a bijection, and so $m'$ yields the map $e$. By the discussion of section 10 in \cite{AM} and the comparison theorems of Huber, this map $e$ induces an isomorphism on cohomology with twisted finite coefficients, and on profinite completion of the \'etale fundamental group. Therefore $e$ is a $\natural$-isomorphism, by (4.3) of \cite{AM}. Now apply Theorem (12.5) of \cite{AM} to obtain an actual isomorphism.
\end{proof}
\begin{cor} Suppose in addition that $X$ is geometrically unibranch. Then $X_{et}$ is isomorphic to the profinite completion of $X_{cl}$.
\end{cor}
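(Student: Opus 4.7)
The plan is to combine the preceding theorem, which establishes that the canonical map $e:X_{cl}\to X_{et}$ induces an isomorphism on profinite completions in $\textrm{Pro-}\textrm{Ho}(SSets)$, with the classical fact from Artin--Mazur that when $X$ is geometrically unibranch the pro-object $X_{et}$ is already profinite, i.e.~the natural map $X_{et}\to (X_{et})^{\wedge}$ to its profinite completion is a weak equivalence.

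Concretely, I would proceed as follows. First, invoke the previous theorem to obtain a canonical map $e:X_{cl}\to X_{et}$ and a commutative diagram
$$\xymatrix{ X_{cl}\ar[r]^{e}\ar[d] & X_{et}\ar[d] \\
(X_{cl})^{\wedge}\ar[r]^{e^{\wedge}} & (X_{et})^{\wedge} }$$
in $\textrm{Pro-}\textrm{Ho}(SSets)$ in which the bottom arrow $e^{\wedge}$ is a weak equivalence (by that theorem). Second, apply the Artin--Mazur result (Corollary 11.2 and the discussion in \S11 of \cite{AM}) which asserts that for a geometrically unibranch locally Noetherian scheme the pro-homotopy groups $\pi_n^{et}(X,x)$ are profinite groups for all $n$, so that the right-hand vertical map $X_{et}\to(X_{et})^{\wedge}$ is an isomorphism in $\textrm{Pro-}\textrm{Ho}(SSets)$.

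Composing these two isomorphisms we then obtain a canonical isomorphism $(X_{cl})^{\wedge}\xrightarrow{\ \cong\ }(X_{et})^{\wedge}\xrightarrow{\ \cong\ }X_{et}$ in $\textrm{Pro-}\textrm{Ho}(SSets)$, which is exactly the claim of the corollary. The main (essentially the only) non-formal ingredient is the profiniteness of the \'etale homotopy type in the geometrically unibranch case; this was already used implicitly in the statement of Theorem \ref{artin-mazur} in the introduction, and is the reason the geometric unibranch hypothesis appears here but not in the preceding theorem. The rest is formal manipulation of the profinite completion functor on $\textrm{Pro-}\textrm{Ho}(SSets)$ together with two-out-of-three for weak equivalences.
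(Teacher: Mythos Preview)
Your proposal is correct and follows essentially the same route as the paper: the paper's proof is the single line ``Apply (11.1) of \cite{AM},'' which is precisely the Artin--Mazur statement that the \'etale homotopy type of a geometrically unibranch scheme has profinite homotopy pro-groups, so that $X_{et}\to (X_{et})^{\wedge}$ is an equivalence; combined with the preceding theorem this gives the corollary exactly as you outline. Your citation of (11.2) rather than (11.1) in \cite{AM} is a minor discrepancy in labelling within the same section and does not affect the argument.
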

\begin{proof} Apply (11.1) of \cite{AM}.
\end{proof}

\end{document}